\newcommand{\noun}[1]{\textsc{#1}}
\providecommand{\tabularnewline}{\\}
\numberwithin{equation}{section}
\numberwithin{figure}{section}
\theoremstyle{plain}
\newtheorem{thm}{\protect\theoremname}
\theoremstyle{definition}
\newtheorem{defn}[thm]{\protect\definitionname}
\theoremstyle{remark}
\newtheorem{rem}[thm]{\protect\remarkname}
\theoremstyle{definition}
\newtheorem{example}[thm]{\protect\examplename}
\theoremstyle{definition}
\newtheorem*{defn*}{\protect\definitionname}
\theoremstyle{plain}
\newtheorem{prop}[thm]{\protect\propositionname}
\theoremstyle{plain}
\newtheorem{fact}[thm]{\protect\factname}
\theoremstyle{plain}
\newtheorem{lem}[thm]{\protect\lemmaname}
\theoremstyle{plain}
\newtheorem{cor}[thm]{\protect\corollaryname}
\theoremstyle{remark}
\newtheorem*{rem*}{\protect\remarkname}
\providecommand{\corollaryname}{Corollary}
\providecommand{\definitionname}{Definition}
\providecommand{\examplename}{Example}
\providecommand{\factname}{Fact}
\providecommand{\lemmaname}{Lemma}
\providecommand{\propositionname}{Proposition}
\providecommand{\remarkname}{Remark}
\providecommand{\theoremname}{Theorem}
\begin{document}
\global\long\def\pplus{\overset{{\scriptscriptstyle \perp}}{\oplus}}%
\global\long\def\oton#1#2{#1_{1},\dots,#1_{#2}}%
\global\long\def\rk{\text{rk}}%
\global\long\def\tr{\text{tr}}%
\global\long\def\adj{\text{adj}}%
\global\long\def\im{\text{Im}}%
\global\long\def\nin{\notin}%
\global\long\def\id{\text{Id}}%
\global\long\def\makbil{\text{parallel}}%
\global\long\def\meridian{\text{meridian}}%
\global\long\def\sgn{\text{Sgn}}%
\global\long\def\fix{\text{Fix}}%
\global\long\def\hess{\text{Hess}}%
\global\long\def\ooton#1#2{#1_{0},\dots,#1_{#2}}%
\global\long\def\otherwise{\text{Otherwise}}%
\global\long\def\foton#1#2#3{#1\left(#2_{1}\right),\dots,#1\left(#2_{#3}\right)}%
\global\long\def\norm#1{\left\Vert #1\right\Vert }%
\global\long\def\nil{\text{Nill}}%
\global\long\def\fix{\text{Fix}}%
\global\long\def\spec{\text{Spec}}%
\global\long\def\ind{\text{Ind}}%
\global\long\def\fold{\text{Fold}}%
 
\global\long\def\hom{\text{Hom}}%
\global\long\def\ob{\text{Ob}}%
\global\long\def\coker{\text{coker}}%
\global\long\def\rad{\text{Rad}}%
\global\long\def\supp{\text{Supp}}%
\global\long\def\aut{\text{Aut}}%
\global\long\def\gal{\text{Gal}}%
\global\long\def\ann{\text{Ann}}%
\global\long\def\mayervi{\text{Mayer-Vietoris}}%
\global\long\def\conv{\text{conv}}%
\global\long\def\diam{\text{diam}}%
\global\long\def\length{\text{length}}%
\global\long\def\tp{\text{tp}}%
\global\long\def\lcm{\text{lcm}}%
\global\long\def\core{\text{Core}}%
\global\long\def\ad{\text{ad}}%
\global\long\def\ord{\text{ord}}%
\global\long\def\rank{\text{rank}}%
\global\long\def\uindex#1#2{\overset{{\scriptscriptstyle #2}}{#1}}%
\global\long\def\nuindex#1#2{\overset{{\scriptscriptstyle #2}}{#1}\,^{-1}}%
\global\long\def\involution{\overline{{\scriptstyle \bigbox}}}%
\global\long\def\mold{{\scriptstyle \varodot}}%
\global\long\def\type{\text{Type}}%
\global\long\def\spn{\text{span}}%

\title{A representation of $\text{Out}\left(F_{n}\right)$ by counting subwords
of cyclic words}
\author{Noam M.D. Kolodner}
\maketitle
\begin{abstract}
We generalize the combinatorial approaches of Rapaport and Higgins--Lyndon
to the Whitehead algorithm. We show that for every automorphism $\varphi$
of a free group $F$ and every word $u\in F$ there exists a finite
multiset of words $S_{u,\varphi}$ satisfying the following property:
For every cyclic word $w$, the number of times $u$ appears as a
subword of $\varphi\left(w\right)$ depends only on the appearances
of words in $S_{u,\varphi}$ as subwords of $w$. We use this fact
to construct a faithful representation of $\text{Out}\left(F_{n}\right)$
on an inverse limit of $\mathbb{Z}$-modules, so that each automorphism
is represented by sequence of finite rectangular matrices, which can
be seen as successively better approximations of the automorphism. 
\end{abstract}

\section{Introduction}

In \cite{MR1503309} Whitehead introduced an algorithm for determining
whether a word in the free group is primitive. Whiteheads proof of
the algorithm used 3-manifolds. Rapaport \cite{MR131452} Higgins
and Lyndon \cite{MR340420} came up with a combinatorial approach
for proving the algorithm this is summarized in Lyndon-Schupp \cite{MR1812024}.
In this paper we present generalization of this combinatorial approach.
 We will treat words as folded labeled graphs homeomorphic to a compact
line segment and cyclic words as folded labeled graphs homeomorphic
to a circle. One can interpret the method presented in Lyndon and
Schupp in the following way. Let $w$ be a cyclic word and let $\tau$
be a Whitehead generator. One can count how many times a specific
letter appears in $\tau\left(w\right)$ by counting how many times
appropriate 2-letter words appear in $w$. Furthermore, we define
$M_{k}$ with $k=1,2$ to be a $\mathbb{Z}-$module generated freely
by words of length $k$. For every cyclic word $w$ we can assign
an integer non negative vector $\pi_{k}\left(w\right)$ in $M_{k}$
which counts how many times each subword of length $k$ appears in
$w$. In this interpretation a whitehead generator $\tau$ gives rise
to a linear function $\tau_{1}:M_{2}\to M_{1}$ such that $\tau_{1}\pi_{2}\left(w\right)=\pi_{1}\left(\tau\left(w\right)\right)$. 

In Section 1 we show that for every automorphism $\varphi$ and any
word $u$ one can count the number of time $u$ appears in $\varphi\left(w\right)$
by counting appropriate subwords in $w$. 
\begin{thm}
\label{thm:main_counting_words}Let $\varphi$ an automorphism and
$u$ a word we show that there is a unique minimal finite multiset
of words $S_{u,\varphi}$ such that for every cyclic word $w$ 
\begin{equation}
\left|\hom\left(u,\varphi\left(w\right)\right)\right|=\sum_{v\in S_{u,\varphi}}\left|\hom\left(v,w\right)\right|\label{eq:main}
\end{equation}
\end{thm}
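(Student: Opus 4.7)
My plan is to show directly that occurrences of $u$ in $\varphi(w)$ are localised to bounded windows in $w$. Represent $w$ as a folded labelled circle and realise $\varphi(w)$ by replacing each edge labelled $a$ with a reduced path labelled $\varphi(a)$ and then folding. Because $\varphi$ is a free-group automorphism, the amount of cancellation at each boundary between the images of adjacent edges of $w$ is bounded by a constant $C=C(\varphi)$ (controlled by $\max_{a}|\varphi(a)|$ together with the fact that $\varphi^{-1}$ exists). Consequently there is an integer $N=N(u,\varphi)$, of size roughly $|u|+2C$, such that every occurrence of $u$ in $\varphi(w)$ is read off the image of a uniquely determined window of $N$ consecutive edges of $w$.

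For each word $v$ of length $N$, let $c(v)$ be the number of occurrences of $u$ in $\varphi(v)$ whose starting letter lies in a chosen central edge of $v$; this anchoring convention is designed so that as the window slides around $w$, each occurrence of $u$ in $\varphi(w)$ is counted exactly once. For cyclic $w$ sufficiently long this yields
\[
\left|\hom\left(u,\varphi\left(w\right)\right)\right|=\sum_{\left|v\right|=N}c\left(v\right)\left|\hom\left(v,w\right)\right|,
\]
and taking each $v$ with multiplicity $c(v)$ produces a multiset witnessing (\ref{eq:main}). An alternative, more compositional route to the same formula would be to factor $\varphi$ into Whitehead (or Nielsen) generators and iterate: if the claim is known for $\varphi_{1}$ and $\varphi_{2}$ with multisets $S_{u,\varphi_{1}}$ and $S_{v,\varphi_{2}}$ for all $u,v$, then the disjoint multiset union $\bigsqcup_{v\in S_{u,\varphi_{1}}}S_{v,\varphi_{2}}$ witnesses the claim for $\varphi_{1}\varphi_{2}$, because
\[
\left|\hom\left(u,\varphi_{1}\varphi_{2}\left(w\right)\right)\right|=\sum_{v\in S_{u,\varphi_{1}}}\left|\hom\left(v,\varphi_{2}\left(w\right)\right)\right|=\sum_{v\in S_{u,\varphi_{1}}}\sum_{v'\in S_{v,\varphi_{2}}}\left|\hom\left(v',w\right)\right|.
\]

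For uniqueness and minimality, the witnessing multiset is not unique because of the extension identities $\left|\hom\left(v,w\right)\right|=\sum_{a\,:\,va\text{ reduced}}\left|\hom\left(va,w\right)\right|$ (and the mirror identity on the left), which let one exchange a word for its length-one extensions on cyclic words long enough to admit them. However, for fixed $N$ the functions $w\mapsto\left|\hom\left(v,w\right)\right|$ with $\left|v\right|=N$ are linearly independent on cyclic words of length at least $N$, which should be proved by exhibiting, for each admissible multiset of length-$N$ subwords, a cyclic word realising it through a de Bruijn-graph style construction. This pins down the coefficients at the top length, and the unique minimal $S_{u,\varphi}$ is obtained by contracting the extension relations as often as possible; the finitely many short cyclic words must then be accommodated by including in $S_{u,\varphi}$ suitable shorter correction terms. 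The main obstacle will be the careful bookkeeping in the base counting formula -- choosing the anchoring convention and proving that cancellation is confined to a window of bounded size so that every occurrence of $u$ in $\varphi(w)$ is counted exactly once -- together with a rigorous proof of linear independence of the $\hom$-counts on cyclic words, where global wrap-around constraints rule out a naive extrapolation from the linear case.
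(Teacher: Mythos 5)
Your existence argument is essentially sound: the direct bounded-cancellation/window argument would work for long cyclic words, and your ``alternative, more compositional route'' is in fact exactly the paper's strategy (reduce to Nielsen moves, handle those by an explicit case analysis of affixes, and compose). The paper spends most of its effort precisely on the bookkeeping you defer: it replaces your anchoring convention by the notion of an \emph{ideal preimage}, a pair consisting of a base word $v$ together with a specific morphism $u\to\varphi(v)$ compatible with the folding $\mathcal{F}_{\varphi}(v)\to\tilde{\mathcal{F}}_{\varphi}(v)$, so that multiple occurrences of $u$ over the same window (cf.\ $x\mapsto xyy$) and nested windows (cf.\ $x\mapsto xy,\,z\mapsto xyz$) are counted correctly. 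Also note that adding ``shorter correction terms'' for short cyclic words cannot work: any extra summand $\left|\hom\left(v',\cdot\right)\right|$ contributes to long cyclic words as well and destroys the formula there; the paper instead makes the single formula valid for all cyclic words by allowing non-injective (wrapping) graph morphisms $v\to w$.

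The genuine gap is in your uniqueness/minimality argument, which rests on a false premise. For fixed $N$ the functions $w\mapsto\left|\hom\left(v,w\right)\right|$ with $l\left(v\right)=N$ are \emph{not} linearly independent on cyclic words: subtracting your right-extension identity from your left-extension identity for a word $t$ of length $N-1$ gives $\sum_{a}\left|\hom\left(ta,w\right)\right|-\sum_{a}\left|\hom\left(at,w\right)\right|=0$, a nontrivial relation purely among length-$N$ counts. This is exactly the paper's statement that $\im\pi_{N}=\ker\left(\overleftarrow{p}_{N}-\overrightarrow{p}_{N}\right)$ is a \emph{proper} submodule of $M_{N}$ of corank $\rank M_{N-1}$, so the top-length coefficients are only determined modulo a large lattice of relations and are not ``pinned down.'' Consequently the whole weight of uniqueness falls on showing that the rewriting process ``contract an extension relation whenever possible'' is confluent, i.e.\ that every minimal multiset contains every minimal ideal preimage and nothing else. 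That is the content of the paper's hardest lemma (the argument embedding the candidate full set into a product of rooted $\left(2n-1\right)$-regular trees and extracting a full paradigm from any full set omitting a minimal ideal preimage), and your proposal contains no substitute for it.
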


Because we consider words and cyclic words as labeled graphs, by counting
the number of graph morphisms between $u$ and $w$ we count the number
of time $u$ appears as a subword in $w$ . Given $u$ and $\varphi$
we will also show how to algorithmically calculate $S_{u,\varphi}$.
In Section 2 we use Equation \ref{eq:main} to construct a representation
of $Out\left(F_{n}\right)$. Let $\mathbb{Z}\left[C\right]$ be a
$\mathbb{Z}$-module generated freely by all cyclic words. We extend
$\pi_{k}$ linearly to an homomorphism $\pi_{k}:\mathbb{Z}\left[C\right]\to M_{k}$
while extending the definition of $M_{k}$ for $2\leq k$. 
\begin{thm}
\label{thm:main_inverse_limit}There are homomorphisms $p_{k}:M_{k}\to M_{k-1}$
that satisfy 
\begin{equation}
p_{k}\circ\pi_{k}=\pi_{k-1}\label{eq:inverselimit}
\end{equation}
 
\end{thm}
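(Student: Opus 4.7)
The plan is to define $p_k$ on the natural basis of $M_k$ and then verify equation~(\ref{eq:inverselimit}) by a direct bijective count. For a reduced length-$k$ word $v = v_1 v_2 \cdots v_k$, I will set $p_k(v)$ to be the length-$(k-1)$ prefix $v_1 v_2 \cdots v_{k-1}$, regarded as a basis element of $M_{k-1}$, and extend $\mathbb{Z}$-linearly. (Taking the suffix instead would work equally well; one could also symmetrize, but the asymmetric choice is simplest.)

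To verify $p_k \circ \pi_k = \pi_{k-1}$, I will fix a cyclic word $w$ and compute the coefficient of an arbitrary length-$(k-1)$ reduced word $u$ on each side. On the right, this coefficient is $|\hom(u,w)|$. On the left, expanding the definition, it is $\sum_v |\hom(v,w)|$ where $v$ ranges over length-$k$ reduced words having $u$ as prefix. The identification follows from the bijection that sends each morphism $u \to w$ to its unique extension by the next edge of $w$ past the terminal vertex of the image, yielding a morphism $v = ua \to w$ for the appropriate letter $a$, and sends each morphism $v \to w$ with prefix $u$ to its restriction along the inclusion of the initial segment. Reducedness of the extended word $v$ is automatic because $w$ is folded, so no cancellation can occur between consecutive edges.

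The one delicate point, and essentially the place where the hypothesis ``cyclic word'' genuinely enters, is that every occurrence of $u$ in $w$ must admit such an outgoing extension. This holds precisely because $w$ is homeomorphic to a circle rather than to a segment: every vertex has an outgoing edge in each direction, so the ``next letter'' always exists. Beyond this observation the argument is a routine combinatorial identity, so I do not anticipate any substantive obstacle; the only real bookkeeping is making sure the convention used to define $\pi_k$ on cyclic words (counting graph morphisms, equivalently counting starting positions of subwords) matches the bijective argument above.
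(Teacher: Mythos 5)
There is a genuine gap, and it is exactly at the point your proposal treats as unproblematic: the definition of $p_k$ on the basis. The module $M_k$ is free on the set $W_k$ of \emph{unoriented} words, not on reduced words, and ``the length-$(k-1)$ prefix'' is not a well-defined function of a u-word: the prefix of $v$ and the prefix of $v^{-1}$ are different u-words (the latter is the inverse of the suffix of $v$). If you repair this by fixing an orientation of each length-$k$ u-word and taking the prefix of that representative, the identity $p_k\circ\pi_k=\pi_{k-1}$ can fail. Concretely, take $F_{\{x,y\}}$, $k=2$, and the cyclic word $w=xy$, so that $\pi_2(w)=\overline{xy}+\overline{yx}$ and $\pi_1(w)=\overline{x}+\overline{y}$. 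Choosing the representatives $xy$ for $\overline{xy}$ and $x^{-1}y^{-1}$ for $\overline{yx}$ gives $p_2(\overline{xy})=\overline{x}$ and $p_2(\overline{yx})=\overline{x^{-1}}=\overline{x}$, hence $p_2(\pi_2(w))=2\overline{x}\neq\pi_1(w)$. The underlying reason is that your extension bijection is a bijection between \emph{oriented} occurrences and \emph{oriented} length-$k$ subwords; when you pass to u-words, an occurrence of $\overline{u}$ read against the ambient direction extends forward to a word having $\overline{u}$ as a \emph{suffix}, not a prefix, so ``forward extension'' does not match ``words with prefix $u$'' for any global choice of orientations of the length-$k$ words.

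The fix is to anchor the choice to an orientation of the length-$(k-1)$ affix rather than of the length-$k$ word, which is what the paper does: it fixes once and for all an orientation $\sigma$ of every u-word and defines $\overleftarrow{p}_k$ (resp.\ $\overrightarrow{p}_k$) by sending a length-$k$ u-word to its $(k-1)$-affix that is \emph{outward}- (resp.\ \emph{inward}-) pointing with respect to $\sigma$. With that definition your extension argument can be salvaged: each morphism $\overline{u}\to w$ has exactly two extensions to a length-$k$ subword (one past each endpoint), and exactly one of these exhibits $\overline{u}$ as an outward-pointing affix. The paper itself takes a less direct route, proving $\overleftarrow{p}_k+\overrightarrow{p}_k=\hat{\pi}_{k-1}|_{M_k}$, hence $(\overleftarrow{p}_k+\overrightarrow{p}_k)\circ\pi_k=2\pi_{k-1}$, and separately $\overleftarrow{p}_k\circ\pi_k=\overrightarrow{p}_k\circ\pi_k$ via the gluing/cutting machinery (which it needs anyway to identify $\im\pi_k$ with $\ker(\overleftarrow{p}_k-\overrightarrow{p}_k)$); dividing by $2$ in the torsion-free module then yields $p_k\circ\pi_k=\pi_{k-1}$. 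So your overall strategy is recoverable, but as written the map does not exist on $M_k$ and the unoriented bookkeeping, which is the actual content of the theorem, is missing.
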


Moreover we construct $p_{k}$ and prove properties of this construction
in Theorem \ref{thm:mainsection2}. Let $\varphi$ be an automorphism
let $m_{k}^{\varphi}=\max_{u\in W_{k}}\max_{v\in S_{u,\varphi}}l\left(v\right)$
with $W_{k}$ denoting the set of words of length $k$ and $l\left(v\right)$
denoting the length of $v$. We will show how to use Equation \ref{eq:main}
to define homomorphisms $\varphi_{k}:M_{m_{k}}\to M_{k}$ that satisfy
\begin{equation}
\varphi_{k}\circ\pi_{m_{k}}=\pi_{k}\circ\varphi\label{eq:representation}
\end{equation}
Denote $\im\pi_{k}=\tilde{M}_{k}$ and consider $\varprojlim\tilde{M}_{k}$
the inverse limit of $p_{k}:\tilde{M}_{k}\to\tilde{M}_{k-1}$ . Equations
\ref{eq:inverselimit} and \ref{eq:representation} show that $\varphi\mapsto\varprojlim\varphi_{k}$
is a representation $Out\left(F_{n}\right)\to\aut\left(\varprojlim\tilde{M}_{k}\right)$.
\begin{thm}
\label{thm:main_injective_representation}The representation $Out\left(F_{n}\right)\to\aut\left(\varprojlim\tilde{M}_{k}\right)$
is a faithful representation
\end{thm}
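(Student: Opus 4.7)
The plan is to show that if $\varphi \in \aut(F_{n})$ satisfies $[\varphi] \in \ker\bigl(\text{Out}\left(F_{n}\right) \to \aut(\varprojlim \tilde{M}_{k})\bigr)$, then $\varphi$ is inner. I would carry this out in three steps.

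First I would unpack the hypothesis. For every cyclic word $w$, equation (\ref{eq:inverselimit}) makes $(\pi_{k}(w))_{k}$ an element of $\varprojlim \tilde{M}_{k}$, and the commuting square (\ref{eq:representation}) identifies its image under $\varprojlim \varphi_{k}$ with $(\pi_{k}(\varphi(w)))_{k}$. Since the hypothesis says $\varprojlim \varphi_{k}$ is the identity, we obtain
\[
\pi_{k}(\varphi(w)) = \pi_{k}(w) \qquad \text{for every cyclic word } w \text{ and every } k \geq 1.
\]

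Second, I would argue that $\{\pi_{k}\}_{k\geq 1}$ separates cyclic words, upgrading the previous equality to $\varphi(w) = w$ as cyclic words. Summing the coordinates of $\pi_{k}(w)$ yields the length of $w$, since each vertex of the cyclic labeled graph $w$ is the starting point of exactly one label-preserving morphism of a length-$k$ line segment. Hence $|\varphi(w)| = |w| =: n$. Then at $k = n$ the vector $\pi_{n}(w)$ has support on the $n$ cyclic rotations of $w$ viewed as ordinary length-$n$ words, and any such rotation is a representative of the cyclic word $w$, so $w$ is recovered uniquely. Consequently $\varphi$ preserves every conjugacy class of $F_{n}$.

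Third, I would invoke the classical rigidity statement that every automorphism of a finitely generated non-abelian free group fixing every conjugacy class is inner. Applied to $\varphi$, this gives $[\varphi] = 1$ in $\text{Out}\left(F_{n}\right)$, yielding faithfulness.

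The first two steps are a direct repackaging of the structures set up earlier in the paper, together with the elementary observation that a cyclic word of length $n$ is recoverable from its multiset of length-$n$ rotations. The main obstacle is the classical rigidity statement cited in the third step; rather than reproving that the action of $\text{Out}\left(F_{n}\right)$ on conjugacy classes is faithful, I would simply cite a standard reference.
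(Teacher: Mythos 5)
Your skeleton matches the paper's: both proofs combine (a) the fact that the family $\left\{ \pi_{k}\right\}$ separates cyclic words with (b) a rigidity statement to the effect that an outer automorphism acting trivially on conjugacy classes is trivial. Steps 1 and 2 of your argument are correct, and your step 2 is actually a cleaner route to (a) than the paper's: since you only need to separate two single cyclic words rather than a linear combination, recovering $w$ from $\pi_{l\left(w\right)}\left(w\right)$ (length from the coordinate sum, then the word itself from any length-$l\left(w\right)$ subword) bypasses Lemma \ref{lem:notPower} and Proposition \ref{prop:Let--be} and needs no non-power hypothesis.

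The gap is in the passage from step 2 to step 3. In this paper a cyclic word is an \emph{unoriented} circle: by definition $w$ and $w^{-1}$ denote the same cyclic word, and consequently $\pi_{k}\left(w\right)=\pi_{k}\left(w^{-1}\right)$. So the conclusion of your step 2 is only that $\varphi\left(w\right)$ is conjugate to $w$ \emph{or} to $w^{-1}$, with the choice a priori depending on $w$; it is not that $\varphi$ preserves every conjugacy class. The classical theorem you cite in step 3 (a conjugating automorphism of a free group is inner, i.e.\ Grossman's Property A) requires the oriented statement, and the discrepancy is not cosmetic: the automorphism of $F_{2}$ inverting both basis elements sends every \emph{primitive} element to a conjugate of its inverse, so at the level of unoriented classes of basis elements nothing distinguishes it from the identity. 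To close the gap you must either (i) upgrade the conclusion --- on homology $\varphi_{*}v=\pm v$ for all $v$ forces $\varphi_{*}=\pm\id$, after which one rules out $-\id$ and treats the homologically trivial classes separately, which takes genuine work --- or (ii) cite the unoriented form of the rigidity theorem (faithfulness of the $\text{Out}\left(F_{n}\right)$-action on unoriented conjugacy classes, as used in the literature on geodesic currents). For what it is worth, the paper's own proof leans on an even stronger unproved claim of this type (that two outer classes already differ on the unoriented class of the image of some fixed basis element --- false as stated: compare $x\mapsto yxy^{-1},y\mapsto y,z\mapsto z$ with the identity on $F_{3}$), so the issue is shared; but as written your step 3 does not follow from the theorem you invoke.
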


As a corollary the multisets $S_{u,\varphi}$ characterize $\varphi$
i.e. let $\varphi,\psi$ automorphisms in different conjugacy classes
then there exists a word $u$ such that $S_{u,\varphi}\neq S_{u,\psi}$.
Although we have not yet found application for this representation
we believe it holds valuable information on outer automorphisms.

\section{Preliminaries }
\begin{defn}[Graphs]
We use graphs in the sense of Serre \cite{MR0476875}: A \emph{graph}
$\Gamma$ is a set $V\left(\Gamma\right)$ of vertices and a set $E\left(\Gamma\right)$
of edges with a function $\iota\colon E\left(\Gamma\right)\to V\left(\Gamma\right)$
called the initial vertex map and an involution $\involution\colon E\left(\Gamma\right)\to E\left(\Gamma\right)$
with $\overline{e}\neq e$ and $\overline{\overline{e}}=e$. A \emph{graph
morphism} $f\colon\Gamma\to\Delta$ is a pair of set functions $f^{E}\colon E\left(\Gamma\right)\to E\left(\Delta\right)$
and $f^{V}\colon V\left(\Gamma\right)\to V\left(\Delta\right)$ that
commute with the structure functions. A \emph{path }in $\Gamma$ is
a finite sequence $\oton en\in E\left(\Gamma\right)$ with $\iota\left(\overline{e}_{k}\right)=\iota\left(e_{k+1}\right)$
for every $1\leq k<n$. The path is \emph{closed} if $\iota\left(\overline{e}_{n}\right)=\iota\left(e_{1}\right)$,
and \emph{reduced} if $e_{k+1}\neq\overline{e}_{k}$ for all $k$.
All the graphs in the paper are assumed to be connected unless specified
otherwise, namely, for every $e,f\in E\left(\Gamma\right)$ there
is a path $e,\oton en,f$.
\end{defn}

\begin{defn}[Labeled graphs]
Let $X$ be a set and let $X^{-1}$ be the set of its formal inverses.
We define $R_{X}$ to be the graph with $E\left(R_{X}\right)=X\cup X^{-1}$,
$V\left(R_{X}\right)=\left\{ *\right\} $, $\overline{x}=x^{-1}$
and $\iota\left(x\right)=*.$ An \emph{$X$-labeled graph }is a graph
$\Gamma$ together with a graph morphism $\Lambda\colon\Gamma\to R_{X}$
called the label function. A morphism of $X$-labeled graphs $\Gamma$
and $\Delta$ is a graph morphism $f\colon\Gamma\to\Delta$ that commutes
with the label functions. 
\end{defn}

\begin{defn}[Folding]
A labeled graph $\Gamma$ is \emph{folded }if $\Lambda\left(e\right)\neq\Lambda\left(f\right)$
holds for every two edges $e,f\in E\left(\Gamma\right)$ with $\iota\left(f\right)=\iota\left(e\right)$.
Or equivalently if $\Lambda$ is a graph immersion. If $\Gamma$ is
not folded, there exist $e,f\in E\left(\Gamma\right)$ s.t.\ $\iota\left(e\right)=\iota\left(f\right)$
and $\Lambda\left(e\right)=\Lambda\left(f\right)$; Let $\Gamma'$
be the graph obtained by identifying the vertex $\iota\left(\overline{e}\right)$
with $\iota\left(\overline{f}\right)$, the edges $e$ with $f$ and
$\overline{e}$ with $\overline{f}$. We say $\Gamma'$ is the result
of \emph{folding} $e$ and $f$. The label function $\Lambda$ factors
through $\Gamma'$, yielding a label function $\Lambda'$ on $\Gamma'$.
The labeled graphs morphism $g:\Gamma\to\Gamma'$ is called a folding.
Let $\Gamma$ be a finite labeled graph. There is a finite sequence
of foldings $g_{i}:\Gamma_{i-1}\to\Gamma_{i}$ with $\Gamma_{0}=\Gamma$,
$0\leq i\leq n$ such that $\Gamma_{n}$ is folded. The folded graph
$\Gamma_{n}$ is unique and it does not depend on the order in which
we choose the foldings. See \cite{MR695906}.
\end{defn}

\begin{rem}
For reduced words $u,v$ in a free group, we write $u\cdot v$ to
indicate that there is no cancellation in their concatenation, namely
the first letter in $v$ is not the inverse of the last letter in
$u$. 
\end{rem}

\begin{defn}[Proper subword]
 Let $F_{n}$ be a free and let $u_{1},u_{2},v,w\in F_{n}$ s.t.
$u_{1}\cdot v\cdot u_{2}=w$. If the multiplication $u_{1}\cdot v$
and $v\cdot u_{2}$ are preformed in $F_{n}$ without cancellation
$u_{1},v$ and $u_{2}$ are called proper subwords of $w$. We call
$u_{1}$ a prefix and $u_{2}$ a suffix. 
\end{defn}

\begin{defn}[Unoriented-Word]
 A unoriented word is a folded labeled graph that is topologically
homeomorphic to a connected compact set in $\mathbb{R}$. Let $u\in F_{n}$
be a word we denote the unoriented word by $\overline{u}=\overline{u^{-1}}$
. And call $u$ and $u^{-1}$ orientations of $\overline{u}$. When
context allows we drop the bar. We abbreviate u-word. 
\end{defn}

\begin{defn}[Cyclic Word]
 A cyclic word is a folded labeled graph that is topologically homeomorphic
to a circle. We denote a cyclic word by a cyclically reduced representative
without a bar. This may differ from convention as often cyclic words
are defined to be the conjugation class of a word in $F_{n}$. Let
$w\in F_{n}$ under our definition $w$ and $w^{-1}$ denote the same
cyclic word. 
\end{defn}

\begin{defn}[Subword]
 Let $w$ be a u-word. A u-word $u$ is called subword of $w$ if
there is a graph morphism $u\to w$. Let $w$ be a cyclic word. An
u-word $\overline{u}$ will be called a subword of $w$ if there is
a labeled graph morphism from $u$ to $w$ (it need not be injective!).
Let $x\in F_{\left\{ x,y\right\} }$ be a one letter cyclic word and
let $\overline{x^{n}}$ be an u-word under this definition $\overline{x^{n}}$
is a subword of $x$ (as a cyclic word of course). Again this may
differ from convention.
\end{defn}

\begin{defn}[k-Affix]
 Let $k\in\mathbb{N}$ and let $\overline{w}$ be an u-word with
$l\left(w\right)>k$. A $k$-affix is an u-word $\overline{u}$ of
length $k$ s.t. $u$ is either a prefix or suffix of $w$ or $w^{-1}$.
We notice that if $u$ is prefix of $w$ than $u^{-1}$ is a suffix
of $w^{-1}$ thus each u-word $\overline{w}$ has exactly two $k$-affixes.
This can be seen as a graph function between u-words that is a local
homeomorphism except for exactly one point.
\end{defn}

\begin{defn}
Let $\varphi\colon F_{Y}\to F_{X}$ be a homomorphism such that $\forall y\in Y,\varphi\left(y\right)\neq1$
. Let $\Delta$ be a $Y$-labeled graph we construct an $X$-labeled
graph $\mathcal{F}_{\varphi}\left(\Delta\right)$ by replacing every
edge $e\in E\left(\Delta\right)$ by a reduced path labeled $\varphi\left(\Lambda\left(e\right)\right)$.
Let $\Delta$ and $\Xi$ be $Y$-labeled graphs, and $f\colon\Delta\to\Xi$
a graph morphism. The graph morphism $\mathcal{F}_{\varphi}\left(f\right)\colon\mathcal{F}_{\varphi}\left(\Delta\right)\to\mathcal{F}_{\varphi}\left(\Xi\right)$
is defined by gluing the identity maps $\bigsqcup_{e\in E\left(\Delta\right)}\varphi\left(\Lambda\left(e\right)\right)\to\varphi\left(\Lambda\left(f\left(e\right)\right)\right)$.
We notice that generally $\mathcal{F}_{\varphi}\left(\Delta\right)$
is not a folded graph. We denote by $\tilde{\mathcal{F}}_{\varphi}\left(\Delta\right)$
the folded graph obtained by folding $\mathcal{F}_{\varphi}\left(\Delta\right)$.
It is a functor as well. We notice that the folding morphism denoted
by $\rho_{\Delta}:\mathcal{F}_{\varphi}\left(\Delta\right)\to\tilde{\mathcal{F}}_{\varphi}\left(\Delta\right)$
is a natural transformation. This definition is taken from \cite{MR4187248}.
\end{defn}

The motivation for $\mathcal{F}$ is topological: thinking of $\Delta,R_{Y},R_{X}$
as topological spaces and of $\Lambda\colon\Delta\to R_{Y},\:\varphi:R_{Y}\to R_{X}$
as continuous functions, we would like to think of $\Delta$ as an
$X$-labeled graph with the label function $\varphi\circ\Lambda$.
The problem is that $\varphi\circ\Lambda$ does not send edges to
edges, and we mend this by splitting edges in $\Delta$ to paths representing
their images in $R_{X}$.
\begin{example}
\label{exa:functor}Let $\varphi:F_{\left\{ x,y\right\} }\to F_{\left\{ x,y\right\} }$
be the automorphism $x\mapsto xy,\:y\mapsto y$. In Figure \ref{fig:functor_exmpl}
we see the graph morphism between the u-word $\overline{x}$ and the
cyclic word $xy$ and the morphism after applying\emph{ }$\mathcal{F}_{\varphi}$
and $\tilde{\mathcal{F}_{\varphi}}$. 
\end{example}

\captionsetup[figure]
{%
labelsep  = space  
}

\begin{figure}[h]
\adjustbox{scale=0.75,center}{%
\begin{tikzcd}
&&& \bullet & \bullet \\ 
&&& \bullet & \bullet \\ 
&&& {} & {} \\ 	&& {} &&& {} \\ 
\bullet & \bullet & \bullet &&& \bullet & \bullet & \bullet \\
& \bullet \\ 
\bullet && \bullet &&& {\scriptstyle x} & \bullet & \bullet
\arrow["x"{description}, from=1-4, to=1-5] 	
\arrow["x"{description}, curve={height=-6pt}, from=2-4, to=2-5] 	
\arrow["y"{description}, curve={height=12pt}, from=2-4, to=2-5] 	
\arrow["{\tilde{\mathcal{F}}_{\varphi}}", Rightarrow, from=3-5, to=4-6] 	
\arrow["x"{description}, from=5-6, to=5-7] 	\arrow["y"{description}, from=5-7, to=5-8] 	
\arrow[curve={height=-18pt}, from=7-6, to=7-7] 	\arrow[curve={height=-18pt}, no head, from=7-7, to=7-6] 	
\arrow["y"{description}, from=7-7, to=7-8] 	
\arrow["{\mathcal{F} _{\varphi}}"', Rightarrow, from=3-4, to=4-3] 	
\arrow["x"{description}, from=5-1, to=5-2] 	
\arrow["y"{description}, from=5-2, to=5-3] 	
\arrow["x"{description}, curve={height=-6pt}, from=7-1, to=6-2] 	
\arrow["y"{description}, curve={height=-6pt}, from=6-2, to=7-3] 	
\arrow["y"{description}, curve={height=12pt}, from=7-1, to=7-3] 
\end{tikzcd}
}
\caption{\label{fig:functor_exmpl}}
\end{figure}

\begin{defn}
Let $W_{k}$ be the set of length $k$ u-words. We define $M_{k}=\spn_{\mathbb{Z}}W_{k}$
the free module generated by $W_{k}$. We define $M_{k}^{*}$ to be
the dual to $M_{k}$. If $u\in W_{k}$ let $u^{*}\in M_{k}^{*}$ be
the functional s.t. $u^{*}\left(u\right)=1$ and $u^{*}\left(v\right)=0$
for every $v\in W_{k}$ such that $v\neq u$. Let $C$ the set of
cyclic words of $F_{n}$ and $\mathbb{Z}\left[C\right]$ the free
model generated by cyclic words. We define a sequence of homomorphisms
$\pi_{k}:\mathbb{Z}\left[C\right]\to M_{k}$. Let $w\in C$ and $u\in W_{k}$
we define $u^{*}\left(\pi_{k}\left(w\right)\right)=\left|\hom\left(u,w\right)\right|$ 
\end{defn}

\begin{example}
\label{exa:pi_k}Take $F_{\left\{ x,y\right\} }$and the cyclic word
$xyx^{-1}y^{-1}$ then $\pi_{2}\left(xyx^{-1}y^{-1}\right)=\overline{xy}+\overline{yx^{-1}}+\overline{x^{-1}y^{-1}}+\overline{y^{-1}x}$
and $\pi_{3}\left(xyx^{-1}y^{-1}\right)=\overline{xyx^{-1}}+\overline{yx^{-1}y^{-1}}+\overline{x^{-1}y^{-1}x}+\overline{y^{-1}xy}$
and $\pi_{1}\left(xyx^{-1}y^{-1}\right)=2\overline{x}+2\overline{y}$. 
\end{example}

\section{Counting subwords}

We start by demonstrating our interpretation of Lyndon-Schupp for
the simplest kind of Whitehead generator. 
\begin{example}
\label{exa:main}Let $\varphi:F_{\left\{ x,y,z\right\} }\to F_{\left\{ x,y,z\right\} }$
be the automorphism $x\mapsto xy,y\mapsto y,z\mapsto z$. We construct
$\varphi_{1}:M_{2}\to M_{1}$ s.t. $\varphi_{1}\pi_{2}\left(w\right)=\pi_{1}\left(\varphi\left(w\right)\right)$ 
\end{example}

Let $w$ be a cyclic word. The letters $\overline{x}$ and $\overline{z}$
(u-words of length $1$) will appear the same amount of times in $\varphi\left(w\right)$
as they appear in $w$. Each time $x$ appears in $w$ it is followed
by $x,y,y^{-1},z$ or $z^{-1}$ therefor by counting the number of
times the subwords $\overline{xx},\overline{xy},\overline{xy^{-1}},\overline{xz},\overline{xz^{-1}}$
appear we count the number of times $\overline{x}$ appears. Now we
count the number of appearances of $\overline{y}$ in $\varphi\left(w\right)$.
Every $y$ in $w$ that is not followed by $x^{-1}$ gives a $y$
in $\varphi\left(w\right)$. Every $x$ in $w$ gives a $y$ in $\varphi\left(w\right)$
as long as it is not followed by $y^{-1}$. So if we count the number
of the subwords $\overline{yx},\overline{yy},\overline{yz},\overline{yz^{-1}}$
and $\overline{xx},\overline{xy},\overline{xz},\overline{xz^{-1}}$
in $w$ this will gives us the number of appearances of $y$ in $\varphi\left(w\right)$.
We use the identification of $\hom\left(M_{2},M_{1}\right)$ with
$M_{2}^{*}\otimes M_{1}$ to define $\varphi_{1}$ as follows
\begin{eqnarray*}
\varphi_{1} & = & \left(\overline{xx}^{*}+\overline{xy}^{*}+\overline{xy^{-1}}^{*}+\overline{xz}^{*}+\overline{xz^{-1}}^{*}\right)\otimes\overline{x}\\
 & + & \left(\overline{yx}^{*}+\overline{yy}^{*}+\overline{yz}^{*}+\overline{yz^{-1}}^{*}+\overline{xx}^{*}+\overline{xy}^{*}+\overline{xz}^{*}+\overline{xz^{-1}}^{*}\right)\otimes\overline{y}\\
 & + & \left(\overline{zx}^{*}+\overline{zx^{-1}}^{*}+\overline{zy}^{*}+\overline{zy^{-1}}^{*}+\overline{zz}^{*}\right)\otimes\overline{z}
\end{eqnarray*}
 We want to generalize this. We will generalize in a naive intuitive
approach then show why this approach does not work and give the right
but somewhat complicated and non-intuitive definitions.
\begin{defn*}[naive ideal-preimage]
 Let $\varphi$ be an automorphism and $\overline{u}$ be an u-word.
An u-word $\overline{v}$ is a $\varphi$-ideal preimage of $\overline{u}$
if for every u-word $\overline{w}$ with a graph morphism $\overline{v}\to\overline{w}$
there exists a graph morphism $\overline{u}\to\overline{\varphi\left(w\right)}$.
Equivalently, let $v$ be an orientation of $\overline{v}$. For every
words $w_{1},w_{2}$ with $w_{1}\cdot v\cdot w_{2}$ the u-word $\overline{u}$
is a subword of $\varphi\left(\overline{w_{1}vw_{2}}\right)$. 
\end{defn*}
We notice that if $\overline{v_{1}}$ is a $\varphi$-ideal preimage
of $\overline{u}$ and $\overline{v_{2}}$ is a u-word such that there
is a graph morphism $\overline{v_{1}}\to\overline{v_{2}}$ then $\overline{v_{2}}$
is a $\varphi$-ideal preimage of $\overline{u}$ as well.
\begin{defn*}[naive Minimal ideal-preimage]
 A $\varphi$-ideal preimage $\overline{v}$ of $\overline{u}$ is
said to be minimal if there is no $\overline{v'}\to\overline{v}$
such that $\overline{v'}$ is also a $\varphi$-ideal preimage of
$\overline{u}$
\end{defn*}
In Example \ref{exa:main} the minimal $\varphi$-ideal preimages
of $\overline{x}$ and $\overline{z}$ are $\overline{x}$ and $\overline{z}$
respectively and the minimal $\varphi$-ideal preimages of $\overline{y}$
are $\overline{yx},\allowbreak\overline{yy},\allowbreak\overline{yz},\allowbreak\overline{yz^{-1}},\allowbreak\overline{xx},\allowbreak\overline{xy},\allowbreak\overline{xz},\allowbreak\overline{xz^{-1}}$.
Thus imitating Example \ref{exa:main} we count the number of subwords
$\overline{u}$ in a cyclic word $\varphi\left(w\right)$ by counting
the number of minimal $\varphi$-ideal preimages in $w$. The next
two example show why a more sophisticated approach is needed
\begin{example}
\label{exa:problem1}Let $\varphi:F_{\left\{ x,y,z\right\} }\to F_{\left\{ x,y,z\right\} }$
be the automorphism $x\mapsto xyy,\,y\mapsto y,\,z\mapsto z$. We
notice that as before in Example \ref{exa:main} the u-word $\overline{xz}$
is a $\varphi$-ideal preimage of $\overline{y}$ but we observe that
$\varphi\left(\overline{xz}\right)=\overline{xyyz}$ and that no matter
what words $w_{1},w_{2}$ with $w_{1}\cdot xz\cdot w_{2}$ then $\varphi\left(\overline{w_{1}xzw_{2}}\right)$
will include 2-copies of $\overline{y}$. Thus the approach of counting
the number of $\overline{y}$ in $\varphi\left(w\right)$ by counting
minimal ideal preimages will not work here. 
\end{example}

\begin{example}
\label{exa:problem2}Let $\varphi:F_{\left\{ x,y,z\right\} }\to F_{\left\{ x,y,z\right\} }$
be the automorphism $x\mapsto xy,y\mapsto y,z\mapsto xyz$. Here we
give an example of how minimal ideal preimages can be nested in each
other. We will state some facts without proof. One can prove these
facts using the tools developed in the continuation of this paper.
The u-word $\overline{xz}$ is a $\varphi$-ideal preimage of $\overline{xy}$.
We notice that $\varphi\left(\overline{xz}\right)=\overline{xyxyz}$
and for the $\overline{xy}$ marked with an underline $\overline{xy\underline{xy}z}$
and for any words $w_{1},w_{2}$ with $w_{1}\cdot xz\cdot w_{2}$
the u-word $\varphi\left(\overline{w_{1}xzw_{2}}\right)$ will contain
a copy of the underlined $\overline{xy}$. As for the other $\overline{xy}$
underlined in $\overline{\underline{xy}xyz}$ this is not true if
we take $w_{1}=z^{-1}$ then there is cancellation 
\[
\varphi\left(\overline{z^{-1}xz}\right)=\overline{z^{-1}y^{-1}x^{-1}xyxyz}=\overline{z^{-1}xyz}
\]
But if we take $\overline{yxz}$ then $\varphi\left(\overline{yxz}\right)=\overline{yxyxyz}$.
Now for every words $w_{1},w_{2}$ with $w_{1}\cdot yxz\cdot w_{2}$
the u-word $\varphi\left(\overline{w_{1}yxzw_{2}}\right)$ does contains
a copy of the $\overline{xy}$ underlined in $\overline{y\underline{xy}xyz}$.
Under the naive definition of a minimal ideal preimage $\overline{yxz}$
is not an ideal preimage because it contains the minimal ideal preimage
$\overline{xz}$. But then we cannot count the number of times $\overline{xy}$
appears by counting minimal ideal preimages. Because if $w$ is a
cyclic word with a graph morphism $\overline{yxz}\to w$ then $\overline{yxz}$
will contribute two copies of $\overline{xy}$ to $\varphi\left(w\right)$
but we will only count one. 
\end{example}

Let $\overline{u}$ be a u-word and $\varphi$ an automorphism and
let $\overline{v}$ be a $\varphi$-ideal preimage of $\overline{u}$.
We conclude from Examples \ref{exa:problem1} and \ref{exa:problem2}
that in order to count appearances of $\overline{u}$ in $\varphi\left(w\right)$
using ideal preimages our definition need to take into account the
specific embedding of $\overline{u}$ in $\varphi\left(v\right)$. 
\begin{defn}[Ideal-preimage]
 Let $\varphi$ be an automorphism of a free group. Let $u$ be an
u-word. An ideal preimage of $u$ with respect to $\varphi$ is a
pair of a u-word $v$ and a graph morphism $u\to\varphi\left(v\right)$
s.t. for every u-word $w$ with $v\to w$ there is a graph morphism
$u\to\varphi\left(w\right)$ such that the diagram  
\[
\xymatrix{ & u\ar[rd]\ar[ld]\\
\varphi\left(v\right)\ar[d] &  & \varphi\left(w\right)\ar[d]\\
\mathcal{\tilde{F}}_{\varphi}\left(v\right)\ar[rr] &  & \mathcal{\tilde{F}}_{\varphi}\left(w\right)
}
\]
 commutes. Even though we defined an ideal preimage to be a pair $\left(v,u\to\varphi\left(v\right)\right)$
we will denote it $u\to\varphi\left(v\right)$ for brevity. We will
call the u-word $v$ the base word of $u\to\varphi\left(v\right)$.
\end{defn}

This definition compares a specific embedding of $u$ in $\varphi\left(v\right)$
with a specific embedding of $u$ in $\varphi\left(w\right)$ that
arises from the specific embedding of $v\to w$. In general a graph
morphism $v\to w$ does not give rise to a graph morphisms between
$\varphi\left(v\right)$ and $\varphi\left(w\right)$ but it does
give rise to a graph morphism $\mathcal{\tilde{F}}_{\varphi}\left(v\right)\to\mathcal{\tilde{F}}_{\varphi}\left(w\right)$
cf. Example \ref{exa:functor}. 

Two distinct ideal preimages can have the same base word $v$ but
two distinct graph morphisms as in Examples \ref{exa:problem1} and
\ref{exa:problem2}. An ideal preimage $u\to\varphi\left(v_{1}\right)$
is said to be a sub ideal preimage of $u\to\varphi\left(v_{2}\right)$
if there is a graph morphism $v_{1}\to v_{2}$ such that 
\[
\xymatrix{ & u\ar[rd]\ar[ld]\\
\varphi\left(v_{1}\right)\ar[d] &  & \varphi\left(v_{2}\right)\ar[d]\\
\tilde{\mathcal{F}}_{\varphi}\left(v_{1}\right)\ar[rr] &  & \mathcal{\tilde{F}}_{\varphi}\left(v_{2}\right)
}
\]
If $u\to\varphi\left(v_{1}\right)$ and $v_{2}$ is a u-word with
$v_{1}\to v_{2}$ then there is a morphism $u\to\varphi\left(v_{2}\right)$
satisfying the diagram above thus $u\to\varphi\left(v_{2}\right)$
inherits a structure of ideal preimage. If an ideal preimage has no
non-trivial sub ideal preimages we say that it is a minimal ideal
preimage. 
\begin{prop}
Let $u\to\varphi\left(v\right)$ be an ideal preimage then for every
\textbf{cyclic} word $w$ with $v\to w$ there is a morphism $u\to\varphi\left(w\right)$
such that 
\[
\xymatrix{ & u\ar[rd]\ar[ld]\\
\varphi\left(v\right)\ar[d] &  & \varphi\left(w\right)\ar[d]\\
\tilde{\mathcal{F}}_{\varphi}\left(v\right)\ar[rr] &  & \mathcal{\tilde{F}}_{\varphi}\left(w\right)
}
\]
 
\end{prop}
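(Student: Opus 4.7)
The plan is to reduce the cyclic case to the u-word case already covered by the definition of ideal preimage, using the functoriality of $\mathcal{\tilde{F}}_{\varphi}$. Given the cyclic word $w$ with a graph morphism $\iota\colon v\to w$, the definition of ideal preimage directly furnishes the morphism $u\to\varphi(v)$ together with the commutative square involving $\mathcal{\tilde{F}}_{\varphi}(v)$. Applying the functor $\mathcal{\tilde{F}}_{\varphi}$ to $\iota$ produces a labeled graph morphism $\mathcal{\tilde{F}}_{\varphi}(\iota)\colon\mathcal{\tilde{F}}_{\varphi}(v)\to\mathcal{\tilde{F}}_{\varphi}(w)$.

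Next I would invoke the identifications $\varphi(v)\cong\mathcal{\tilde{F}}_{\varphi}(v)$ for the u-word $v$ (free-group reduction of a word coincides with folding the path of substituted labels) and $\varphi(w)\cong\mathcal{\tilde{F}}_{\varphi}(w)$ for the cyclic word $w$ (cyclic reduction of a cyclic word coincides with folding of a circular substituted graph). Under these identifications, $\mathcal{\tilde{F}}_{\varphi}(\iota)$ is literally a morphism $\varphi(v)\to\varphi(w)$, and the folding natural transformations $\rho_v,\rho_w$ become the identity. I then define the required morphism $u\to\varphi(w)$ as the composite
\[
u\longrightarrow\varphi(v)\xrightarrow{\;\mathcal{\tilde{F}}_{\varphi}(\iota)\;}\varphi(w).
\]

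The commutativity of the pentagon in the statement is then obtained by pasting two commutative pieces: the square in the definition of the ideal preimage $u\to\varphi(v)$ (which already commutes by hypothesis), and the naturality square of $\rho$ applied to $\iota$, which gives $\rho_w\circ\mathcal{\tilde{F}}_{\varphi}(\iota)=\mathcal{\tilde{F}}_{\varphi}(\iota)\circ\rho_v$. Composing with $u\to\varphi(v)$ on the appropriate side yields the claim.

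The only genuine step that goes beyond formal diagram-chasing is the identification $\varphi(w)\cong\mathcal{\tilde{F}}_{\varphi}(w)$ for cyclic $w$: one needs to check that folding a circular graph $\mathcal{F}_{\varphi}(w)$ performs not only the free-group reductions inside each substituted edge and at interior junctions, but also the cyclic cancellation that can occur where the last letter of $\varphi$ applied to the last edge of $w$ meets the first letter of $\varphi$ applied to the first edge. This is a straightforward verification using the uniqueness of the folded graph, and is the only place where the cyclic structure of $w$, as opposed to a linear structure, plays an essential role.
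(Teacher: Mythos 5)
There is a genuine gap, and it sits exactly at the step you flag as ``the only genuine step'': the identifications $\varphi(v)\cong\tilde{\mathcal{F}}_{\varphi}(v)$ and $\varphi(w)\cong\tilde{\mathcal{F}}_{\varphi}(w)$ are false. Folding does not perform free (or cyclic) reduction by deleting the cancelled edges; it only identifies them, so $\tilde{\mathcal{F}}_{\varphi}(v)$ is in general a tree strictly containing the segment $\varphi(v)$, and $\tilde{\mathcal{F}}_{\varphi}(w)$ for cyclic $w$ is a circle with trees attached, strictly containing the core circle $\varphi(w)$. (Concretely, for $\varphi\colon x\mapsto xy,\ y\mapsto y$ and $v=\overline{xy^{-1}}$, the reduced word is $\varphi(v)=\overline{x}$ but $\tilde{\mathcal{F}}_{\varphi}(v)$ is the segment $\overline{xy}$; the paper's ``Fact'' before Lemma 3.11 makes exactly this distinction, identifying $\varphi(v)$ inside $\tilde{\mathcal{F}}_{\varphi}(v)$ as the edges with an odd number of $\rho_v$-preimages.) Once these identifications are dropped, your composite only produces a morphism $u\to\tilde{\mathcal{F}}_{\varphi}(w)$, and the entire content of the proposition is to show that its image lies in the subgraph $\varphi(w)$ rather than in the attached trees --- equivalently, that no letter of the copy of $u$ is destroyed by the \emph{cyclic} cancellation in $\varphi(w)$, which the definition of ideal preimage (quantified only over u-words $w$) does not directly address. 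If the identification you assert were true, the definition of ideal preimage would be vacuous, contradicting the paper's own example where $\overline{y}\to\varphi(\overline{x})=\overline{xy}$ exists but $\overline{x}$ is not an ideal preimage of $\overline{y}$.

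The missing idea is an unrolling argument: take a cyclically reduced representative $w'$ of $w$, write the reduction of $\varphi(w')$ as $t^{-1}\cdot\dot{\varphi}(w')\cdot t$ with $\dot{\varphi}(w')$ cyclically reduced, and apply the u-word definition of ideal preimage to $w'^{k}$ for $k$ large enough that $v\to w'^{k}$. This yields $u\to\varphi(w'^{k})$, and since $\varphi(w'^{k})=t^{-1}\cdot\dot{\varphi}(w')^{k}\cdot t$ while $u$ has bounded length, for $k$ large the image of $u$ lands in the periodic middle segment $\dot{\varphi}(w')^{k}$, which maps onto the cyclic word $\varphi(w)$. That projection gives the required $u\to\varphi(w)$ and the commutativity of the pentagon. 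Your diagram-pasting of the naturality square for $\rho$ is fine as bookkeeping, but it cannot substitute for this finiteness/periodicity argument.
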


\begin{proof}
Let $w'$ be a word that is a cyclically reduced representative of
$w$. The word $\varphi\left(w'\right)$ is not necessarily cyclically
reduced. We write $\varphi\left(w'\right)=t^{-1}\cdot\dot{\varphi}\left(w'\right)\cdot t$
with $\dot{\varphi}\left(w'\right)$ cyclically reduced. Let $k$
be large enough such that $v\to w'^{k}$ then by the definition of
ideal preimage there is a morphism $u\to\varphi\left(w'\right)^{k}$.
If we choose $k$ large enough we have a morphism $u\to\dot{\varphi}\left(w'\right)^{k}$
and there is a graph morphism from $\dot{\varphi}\left(w'\right)^{k}$
to the cyclic word $\varphi\left(w\right)$. It is easy to verify
that this graph morphism $u\to\varphi\left(w\right)$ satisfies the
desired diagram. 
\end{proof}
\begin{defn}[Full set of ideal preimages]
 Let $u$ be a u-word and $\varphi$ an automorphism. A set of ideal
preimages $S$  is called a full set if for every cyclic word $w$
with a morphism $u\to\varphi\left(w\right)$ there is a unique ideal
preimage $u\to\varphi\left(v\right)\in S$ such that $v\to w$ and
the diagram 
\[
\xymatrix{ & u\ar[rd]\ar[ld]\\
\varphi\left(v\right)\ar[d] &  & \varphi\left(w\right)\ar[d]\\
\tilde{\mathcal{F}}_{\varphi}\left(v\right)\ar[rr] &  & \mathcal{\tilde{F}}_{\varphi}\left(w\right)
}
\]
commutes.
\end{defn}

\begin{rem}
It is important that $w$ in the definition is a cyclic word. The
definition would be null if we change cyclic word by u-word. Let $\varphi:F_{\left\{ x,y\right\} }\to F_{\left\{ x,y\right\} }$be
the automorphism $x\mapsto xy,y\mapsto y$, $u$ be the u-word $u=\overline{y}$
and $w$ be the u-word $w=\overline{x}$. Then there is a graph morphism
$\overline{y}\to\varphi\left(\overline{x}\right)=\overline{xy}$ but
$\overline{x}$ is not an ideal preimage of $\overline{y}$. 
\end{rem}

\begin{defn}[A full paradigm]
 Let $\overline{u},\overline{v}$ be u-words, $\varphi$ be an automorphism,
$\overline{u}\to\varphi\left(\overline{v}\right)$ a graph morphism
and let $v$ be an orientation of $\overline{v}$. A full paradigm
of $\overline{u}\to\varphi\left(\overline{v}\right)$ is a set $\overline{u}\to\varphi\left(\overline{v_{1}}\right),\dots,\overline{u}\to\varphi\left(\overline{v_{2n-1}}\right)$
such that 
\[
\left\{ \overline{v_{1}},\dots,\overline{v_{2n-1}}\right\} =\left\{ \overline{vx}|x\in X\cup X^{-1},v\cdot x\right\} 
\]
 or 
\[
\left\{ \overline{v_{1}},\dots,\overline{v_{2n-1}}\right\} =\left\{ \overline{xv}|x\in X\cup X^{-1},x\cdot v\right\} 
\]
 and the diagram 
\[
\xymatrix{ &  & \overline{u}\ar[dr]\ar[dl]\ar[dll]\ar[dd]\\
\tilde{\mathcal{F}}_{\varphi}\left(\overline{v_{1}}\right) & \tilde{\mathcal{F}}_{\varphi}\left(\overline{v_{2}}\right) & \cdots\qquad & \mathcal{\tilde{F}}_{\varphi}\left(\overline{v_{2n-1}}\right)\\
 &  & \mathcal{\tilde{F}}_{\varphi}\left(\overline{v}\right)\ar[ur]\ar[ul]\ar[ull]
}
\]
commutes. 
\end{defn}

\begin{rem}
Let $u\to\varphi\left(v\right)$ and let $u\to\varphi\left(v_{1}\right),\dots,u\to\varphi\left(v_{l}\right)$
be a full paradigm. It is not hard to see that $u\to\varphi\left(v\right)$
is an ideal preimage if and only if $u\to\varphi\left(v_{1}\right),\dots,u\to\varphi\left(v_{l}\right)$
are all ideal preimages. 
\end{rem}

\begin{rem}
\label{rem:eliminating_full_paradigm}Let $u$ be a u-word , $\varphi$
an automorphism, $S$ a set of ideal preimages and $u\to\varphi\left(v\right)\in S$.
Let 
\[
S'=\left(S-\left\{ u\to\varphi\left(v\right)\right\} \right)\cup\left\{ u\to\varphi\left(v_{1}\right),\dots,u\to\varphi\left(v_{l}\right)\right\} 
\]
 such that $u\to\varphi\left(v_{1}\right),\dots,u\to\varphi\left(v_{l}\right)$
is a full paradigm of $u\to\varphi\left(v\right)$ then $S'$ is a
full set if and only if $S$ is a full set. 
\end{rem}

\begin{defn}
A full set is minimal if it does not contain a full paradigm.
\end{defn}

\begin{rem}
Any full set can be reduced to a minimal full set by repeating Remark
\ref{rem:eliminating_full_paradigm}.
\end{rem}

\begin{thm}
\label{thm:main1}Every u-word $u$ and automorphism $\varphi$ has
a unique finite minimal full set. Moreover it is the set of all minimal
ideal preimages. 
\end{thm}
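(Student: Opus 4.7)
I would prove all three assertions simultaneously in three steps: (i) construct a finite full set $S_{0}$ directly, (ii) reduce it to a finite minimal full set $S^{*}$ by iteratively contracting full paradigms, and (iii) identify $S^{*}$ with the set of all minimal ideal preimages, which also delivers uniqueness.

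For step (i), given any cyclic word $w$ and any morphism $\alpha\colon u\to\varphi\left(w\right)$, the image of $\alpha$ in $\mathcal{\tilde{F}}_{\varphi}\left(w\right)$ touches at most $l\left(u\right)$ edges. Pulling these edges back through the folding morphism $\rho_{w}$ and then to the edges of $w$ whose $\varphi$-expansions they came from, and enlarging by a finite buffer on each side to account for expansions that may cancel under folding after further extensions, yields a subword $v\subseteq w$ of length bounded by a constant depending only on $l\left(u\right)$ and $\varphi$. The morphism $\alpha$ factors as $u\to\varphi\left(v\right)\hookrightarrow\varphi\left(w\right)$, and naturality of $\rho$ under $v\to w$ makes $\left(v,u\to\varphi\left(v\right)\right)$ an ideal preimage. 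Since only finitely many base words satisfy the length bound and each admits finitely many morphisms from $u$, the collection $S_{0}$ of all such pairs is a finite full set. For step (ii), each application of Remark~\ref{rem:eliminating_full_paradigm} in its contracting direction replaces the elements of a full paradigm by the single ideal preimage $u\to\varphi\left(v\right)$ that defines it, which is itself an ideal preimage by the remark following the definition of a full paradigm. Each contraction strictly decreases $\left|S\right|$, so the process terminates at a finite minimal full set $S^{*}$.

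For step (iii), I would establish both inclusions. Given a minimal ideal preimage $u\to\varphi\left(v\right)$, close $v$ into a cyclic word $w$ via a sufficiently long generic connecting path so that minimality of $v$ rules out proper sub-base-words covering $u\to\varphi\left(w\right)$ and genericity of the connector rules out alternative base words; fullness of $S^{*}$ then forces $u\to\varphi\left(v\right)\in S^{*}$. Conversely, suppose $u\to\varphi\left(v\right)\in S^{*}$ admits a proper sub-ideal-preimage $u\to\varphi\left(v'\right)$; iterating one letter at a time, I may assume $v=v'x$. For each alternative one-letter right-extension $v'y\neq v$ in the full paradigm of $u\to\varphi\left(v'\right)$, the paradigm element $u\to\varphi\left(v'y\right)$ is itself an ideal preimage, and fullness of $S^{*}$ applied to cyclic words containing $v'y$ produces an element of $S^{*}$ whose base word is exactly $v'y$ (not longer, precisely because $u\to\varphi\left(v'\right)$ is already an ideal preimage and no further context beyond $v'y$ is needed). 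Collecting these with $u\to\varphi\left(v\right)\in S^{*}$ exhibits a full paradigm of $u\to\varphi\left(v'\right)$ inside $S^{*}$, contradicting minimality. The main obstacle is justifying the parenthetical claim that the witnessing elements of $S^{*}$ really have base word exactly $v'y$ rather than a longer extension; making this precise will hinge on exploiting the ideal preimage property of $u\to\varphi\left(v'\right)$ together with naturality of $\rho$.
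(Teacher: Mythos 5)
Your step (i) does not produce a full set, and this is not a cosmetic issue. The definition of a full set demands that for each occurrence $u\to\varphi\left(w\right)$ there is a \emph{unique} element of $S$ witnessing it through the commuting diagram. If $u\to\varphi\left(v\right)$ witnesses an occurrence, then so does $u\to\varphi\left(v'\right)$ for every slightly longer subword $v'$ of $w$ containing $v$ compatibly, and your $S_{0}$ (``the collection of all such pairs'') contains many of these simultaneously; uniqueness fails, and without it the counting identity of Theorem \ref{thm:main_counting_words} overcounts. Even granting a canonical choice of $v$ per occurrence, witnesses produced from \emph{other} words $w''$ can still embed into $w$ and double-count. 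Separately, the claim that the support of $u$ pulls back to a bounded subword of $w$ and that the buffered $v$ is genuinely an ideal preimage is exactly where the analytic content lies (it requires the bounded cancellation constant and a verification over all extensions, not just naturality of $\rho$). The paper avoids both problems by a different route: it proves closure of ``goodness'' under composition (where most of the effort goes into preserving uniqueness of the witness) and then constructs explicit full sets only for the Nielsen move $x_{1}\mapsto x_{1}x_{2}$ via Table \ref{tab:-pseudo-preimages}, where cancellation is at most one letter and uniqueness is checked by a pushout/degree argument.

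Step (iii) contains the gap you yourself flag, and it is the heart of the theorem, not a detail. In both directions you need the element of $S^{*}$ witnessing a closure of $v$ (resp.\ of $v'y$) to have base word \emph{exactly} $v$, and no ``generic connector'' can force this: every one-letter extension of $v$ occurs in every closure, so the witness could always be such an extension regardless of the connector. The paper needs two substantial lemmas here, neither of which appears in your plan: Lemma \ref{lem:minimalPP}, a parity argument on $\left|\rho_{v}^{-1}\left(e\right)\right|$ showing that the witness must contain the minimal ideal preimage as a sub-ideal-preimage at all (without this you do not even know the witness is an extension of $v$); and the following lemma, which ranges over \emph{all} closures of $v$ out to the maximal base-word length occurring in $S^{*}$ and runs a combinatorial argument on a suborder of a product of two rooted $\left(2n-1\right)$-regular trees to extract a full paradigm inside $S^{*}$ whenever $u\to\varphi\left(v\right)\notin S^{*}$. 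That lemma simultaneously gives your forward inclusion, your converse, and the uniqueness of the minimal full set; your proposal replaces it with the assertion it is meant to prove.
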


Theorem \ref{thm:main_counting_words} follows directly. The multiset
$S_{u,\varphi}$ is constructed by taking the base words of the unique
minimal full set. We show first that it is unique then we will show
it exists. 
\begin{fact}
Let $v$ be a u-word let $\varphi$ an automorphism recall that we
denote the folding morphism by $\rho_{v}:\mathcal{F}_{\varphi}\left(v\right)\to\mathcal{\tilde{F}}_{\varphi}\left(v\right)$
. We observe that the graph $\mathcal{\tilde{F}}_{\varphi}\left(v\right)$
is a tree and that the subgraph $\varphi\left(v\right)$ is a line
segment . We observe that an edge $e$ in $\mathcal{\tilde{F}}_{\varphi}\left(v\right)$
belongs to the subgraph $\varphi\left(v\right)$ if and only if the
size of the set of preimages $\left|\rho_{v}^{-1}\left(e\right)\right|$
is odd.
\end{fact}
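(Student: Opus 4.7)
The plan is to first pin down the ambient structure — showing that $\tilde{\mathcal{F}}_{\varphi}(v)$ is a tree inside which $\varphi(v)$ sits as a line segment — and then to establish the parity claim by a straightforward cancellation argument on the walk in this tree induced by $\rho_v$.

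Since $v$ is a u-word, $\mathcal{F}_{\varphi}(v)$ is obtained by concatenating the reduced paths $\varphi(\Lambda(e))$ replacing the edges of $v$; this concatenation is itself a path, hence in particular a tree. A single fold inside a tree must merge two distinct vertices, since otherwise two parallel edges with the same label would close a bigon and contradict simple-connectedness; thus each fold preserves both connectivity and the Euler characteristic $\chi=1$, and iterating along the folding sequence defining $\tilde{\mathcal{F}}_{\varphi}(v)$ we conclude that this graph is a tree. Pick an orientation of $v$ and let $f_1,\dots,f_N$ be its consecutive oriented edges in $\mathcal{F}_{\varphi}(v)$; then $g_1,\dots,g_N:=\rho_v(f_1),\dots,\rho_v(f_N)$ is a walk in the tree between the $\rho_v$-images of the two endpoints of $v$ whose concatenated label is the (possibly unreduced) word $\varphi(v)$. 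Iteratively removing backtracks $g_i,g_{i+1}=\overline{g}_i$ reduces this walk to the unique simple path between its endpoints; because the tree is folded no further reduction is possible, so the label of this simple path is exactly the reduced word $\varphi(v)\in F_X$ — realising $\varphi(v)$ as the claimed line segment.

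For the parity, observe that in Serre's convention $|\rho_v^{-1}(e)|$ equals $\#\{i:g_i=e\}+\#\{i:g_i=\overline{e}\}$: the total number of traversals of the underlying geometric edge by the walk $g_1,\dots,g_N$, in either direction. Each backtrack-cancellation deletes a consecutive pair $g_i,\overline{g}_i$, removing exactly two traversals of one geometric edge and therefore preserving every geometric traversal count modulo $2$. After all cancellations the walk becomes the simple path from start to end, in which every edge of the path is traversed once and every other edge zero times. Hence $|\rho_v^{-1}(e)|$ is odd iff the underlying geometric edge lies on the simple path, i.e., iff $e$ belongs to $\varphi(v)$. The one point requiring care — the main (minor) obstacle — is precisely this orientation aggregation in the definition of $|\rho_v^{-1}(e)|$: if one counted only the indices $i$ with $\rho_v(f_i)=e$, the resulting parity would depend on which side of $e$ contains the walk's endpoints, and the clean equivalence would fail.
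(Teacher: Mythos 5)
Your proof is correct. The paper states this as a bare observation with no proof, and your argument (a fold of a tree merges two distinct vertices and preserves $\chi=1$, the image walk free-reduces to the geodesic realising the reduced word $\varphi\left(v\right)$, and backtrack removal preserves each geometric traversal count modulo $2$) is the standard one the author evidently intends. You are also right to flag the counting convention: since $E$ in Serre's sense contains both orientations, $\left|\rho_{v}^{-1}\left(e\right)\right|$ does count traversals in both directions, which is exactly what makes the parity statement true.
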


\begin{prop}
Let $\overline{v}$,$\overline{u}$ be u-words let $v$ be an orientation
of $\overline{v}$ s.t. $v=v_{1}\cdot v_{2}$, let $\overline{u}\to\mathcal{\tilde{F}}_{\varphi}\left(\overline{v}\right)$
and let $e$ be an edge contained in the image of $\overline{u}$.
We clarify the statement $\left|\rho_{v}^{-1}\left(e\right)\right|=\left|\rho_{v_{1}}^{-1}\left(e\right)\right|+\left|\rho_{v_{2}}^{-1}\left(e\right)\right|$
and show it holds. 
\end{prop}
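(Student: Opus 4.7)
The plan is to first resolve the notational ambiguity and then settle the identity by a short edge-counting argument. Since $v = v_1 \cdot v_2$ without cancellation, the u-words $\overline{v_1}$ and $\overline{v_2}$ embed in $\overline{v}$ as prefix and suffix; write $j_i \colon \overline{v_i} \to \overline{v}$ for the resulting labeled-graph morphisms. Applying the functor $\mathcal{F}_\varphi$ produces graph morphisms $\mathcal{F}_\varphi(j_i) \colon \mathcal{F}_\varphi(\overline{v_i}) \to \mathcal{F}_\varphi(\overline{v})$ which are injective on edges, and by the naturality of $\rho$ asserted in the definition of $\mathcal{F}_\varphi$, the square
\[
\xymatrix{
\mathcal{F}_\varphi(\overline{v_i}) \ar[r]^{\mathcal{F}_\varphi(j_i)} \ar[d]_{\rho_{v_i}} & \mathcal{F}_\varphi(\overline{v}) \ar[d]^{\rho_v} \\
\tilde{\mathcal{F}}_\varphi(\overline{v_i}) \ar[r]_{\tilde{\mathcal{F}}_\varphi(j_i)} & \tilde{\mathcal{F}}_\varphi(\overline{v})
}
\]
commutes. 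The clarification I would adopt is to read $|\rho_{v_i}^{-1}(e)|$ as the cardinality of $\{e' \in E(\mathcal{F}_\varphi(\overline{v_i})) : \rho_v(\mathcal{F}_\varphi(j_i)(e')) = e\}$, which by commutativity of the square equals the number of edges of $\mathcal{F}_\varphi(\overline{v_i})$ whose $\rho_{v_i}$-image is sent by $\tilde{\mathcal{F}}_\varphi(j_i)$ to $e$.

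With this interpretation in hand, the equality reduces to bookkeeping. By the construction of $\mathcal{F}_\varphi$, every edge $f$ of $\overline{v}$ is replaced by a reduced path spelling $\varphi(\Lambda(f))$, and these paths are glued only at the vertices of $\overline{v}$; in particular, their edge sets are pairwise disjoint. Since $v = v_1 \cdot v_2$ is a reduced concatenation, the edges of $\overline{v}$ split as $E(\overline{v_1}) \sqcup E(\overline{v_2})$, sharing only the bridging vertex, so at the level of $\mathcal{F}_\varphi$ I obtain the edge-disjoint decomposition
\[
E(\mathcal{F}_\varphi(\overline{v})) \;=\; \mathcal{F}_\varphi(j_1)\bigl(E(\mathcal{F}_\varphi(\overline{v_1}))\bigr) \;\sqcup\; \mathcal{F}_\varphi(j_2)\bigl(E(\mathcal{F}_\varphi(\overline{v_2}))\bigr).
\]
Intersecting this partition with $\rho_v^{-1}(e)$ and invoking edge-injectivity of each $\mathcal{F}_\varphi(j_i)$ delivers $|\rho_v^{-1}(e)| = |\rho_{v_1}^{-1}(e)| + |\rho_{v_2}^{-1}(e)|$ immediately.

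The only substantive difficulty is the first step: $\rho_{v_i}$ originally lands in $\tilde{\mathcal{F}}_\varphi(\overline{v_i})$ rather than in $\tilde{\mathcal{F}}_\varphi(\overline{v})$, so $\rho_{v_i}^{-1}(e)$ is not literally defined for $e \in E(\tilde{\mathcal{F}}_\varphi(\overline{v}))$ and acquires meaning only by passing through the naturality square above. Once that reinterpretation is adopted, the combinatorics is elementary; notably, the role of $\overline{u}$ and the hypothesis that $e$ lies in its image are not needed for the identity itself but serve to guarantee that $e$ is a meaningful edge whose preimage count is relevant for later applications, for instance the preceding Fact concerning the parity of $|\rho_v^{-1}(e)|$.
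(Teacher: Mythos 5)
Your proof is correct and follows essentially the same route as the paper's: both rest on the edge-disjoint decomposition of $\mathcal{F}_{\varphi}\left(\overline{v}\right)$ into $\mathcal{F}_{\varphi}\left(\overline{v_{1}}\right)$ and $\mathcal{F}_{\varphi}\left(\overline{v_{2}}\right)$ glued at a single vertex, together with the compatibility of $\rho_{v_{i}}$ with $\rho_{v}$ under the embeddings (which you invoke as naturality of $\rho$, while the paper re-derives it via an intermediate partially folded graph and the order-independence of folding). Your reading of $\left|\rho_{v_{i}}^{-1}\left(e\right)\right|$ agrees with the paper's, and your observation that $\overline{u}$ plays no role in the identity itself is accurate.
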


\begin{proof}
First we claim that $\mathcal{\tilde{F}}_{\varphi}\left(\overline{v_{1}}\right)$
and $\tilde{\mathcal{F}}_{\varphi}\left(\overline{v_{2}}\right)$
both embed in $\tilde{\mathcal{F}}_{\varphi}\left(\overline{v}\right)$.
Because $\mathcal{\tilde{F}}_{\varphi}\left(\overline{v_{1}}\right)$
is a folded graph the function $\tilde{\mathcal{F}}_{\varphi}\left(\overline{v_{1}}\right)\to\mathcal{\tilde{F}}_{\varphi}\left(\overline{v}\right)$
is locally injective. Following the fact that both $\mathcal{\tilde{F}}_{\varphi}\left(\overline{v_{1}}\right)$
and $\mathcal{\tilde{F}}_{\varphi}\left(\overline{v}\right)$ are
trees a locally injective function must be globally injective. If
there exists an edge $e$ that satisfies 
\[
\xymatrix{e\ar[r]\ar[d] & u\ar[d]\\
\mathcal{\tilde{F}}_{\varphi}\left(\overline{v_{1}}\right)\ar[r] & \mathcal{\tilde{F}}_{\varphi}\left(\overline{v}\right)
}
\]
 it is unique because all the function in this square are injective.
By $e$ in $\left|\rho_{v_{1}}^{-1}\left(e\right)\right|$ we mean
this unique edge if it exists if it does not exist we consider $\left|\rho_{v_{1}}^{-1}\left(e\right)\right|=0$.
We notice that the graph $\mathcal{F}_{\varphi}\left(\overline{v_{1}\cdot v_{2}}\right)$
is a line segment obtained by gluing $\mathcal{F}_{\varphi}\left(\overline{v_{1}}\right)$
and $\mathcal{F}_{\varphi}\left(\overline{v_{2}}\right)$ along a
vertex. Therefore there are no edges in $\mathcal{F}_{\varphi}\left(\overline{v_{1}}\right)\cap\mathcal{F}_{\varphi}\left(\overline{v_{2}}\right)$.
Consequently we get 
\[
\left|\rho_{v}|_{\mathcal{F}_{\varphi}\left(\overline{v_{1}}\right)}^{-1}\left(e\right)\right|+\left|\rho_{v}|_{\mathcal{F}_{\varphi}\left(\overline{v_{2}}\right)}^{-1}\left(e\right)\right|=\left|\rho_{v}^{-1}\left(e\right)\right|
\]
Let $\mathcal{\tilde{F}}_{\varphi}\left(\overline{v_{1}}\right)\mathcal{\tilde{F}}_{\varphi}\left(\overline{v_{2}}\right)$
be the graph obtained by folding all vertexes in $\mathcal{F}_{\varphi}\left(\overline{v_{1}\cdot v_{2}}\right)$
besides the vertex connecting $\mathcal{F}_{\varphi}\left(\overline{v_{1}}\right)$
and $\mathcal{F}_{\varphi}\left(\overline{v_{2}}\right)$. The function
$\rho_{v_{1}\cdot v_{2}}$ factors through $\mathcal{\tilde{F}}_{\varphi}\left(\overline{v_{1}}\right)\mathcal{\tilde{F}}_{\varphi}\left(\overline{v_{2}}\right)$
because the order of folding does not effect the final folded graph.
The following diagram  
\[
\xymatrix{ & \mathcal{F}_{\varphi}\left(\overline{v_{1}}\right)\ar[r]\ar[d] & \mathcal{F}_{\varphi}\left(\overline{v_{1}\cdot v_{2}}\right)\ar[ddr]\ar[d]\\
e\ar[r]\ar[d] & \mathcal{\tilde{F}}_{\varphi}\left(\overline{v_{1}}\right)\ar[r]\ar[drr] & \mathcal{\tilde{F}}_{\varphi}\left(\overline{v_{1}}\right)\mathcal{\tilde{F}}_{\varphi}\left(\overline{v_{2}}\right)\ar[dr]\\
u\ar[rrr] &  &  & \mathcal{\tilde{F}}_{\varphi}\left(\overline{v_{1}\cdot v_{2}}\right)
}
\]
 commutes. Thus $\left|\rho_{v_{1}\cdot v_{2}}|_{\mathcal{F}_{\varphi}\left(\overline{v_{1}}\right)}^{-1}\left(e\right)\right|=\left|\rho_{v_{1}}^{-1}\left(e\right)\right|$.
\end{proof}
\begin{lem}
\label{lem:minimalPP}Let $\overline{u}\to\varphi\left(\overline{v}\right)$
be a minimal ideal-preimage and let $w$ be a cyclic word with $\overline{v}\to w$.
Let $\overline{u}\to\varphi\left(\overline{y}\right)$ be an ideal
preimage with $\overline{y}\to w$ such that 
\[
\xymatrix{ & \overline{u}\ar[rd]\ar[d]\ar[ld]\\
\varphi\left(\overline{v}\right)\ar[d] & \varphi\left(\overline{w}\right)\ar[d] & \varphi\left(\overline{y}\right)\ar[d]\\
\mathcal{\tilde{F}}_{\varphi}\left(\overline{v}\right)\ar[r] & \mathcal{\tilde{F}}_{\varphi}\left(\overline{w}\right) & \mathcal{\tilde{F}}_{\varphi}\left(\overline{y}\right)\ar[l]
}
\]
 commutes. Then $\overline{u}\to\varphi\left(\overline{v}\right)$
is a sub ideal preimage of $\overline{u}\to\varphi\left(\overline{y}\right)$. 
\end{lem}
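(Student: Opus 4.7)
The strategy is to pass to the universal cover of $w$, overlap the two lifts there, and let minimality do the work. Let $p\colon\tilde{w}\to w$ be the universal cover, so $\tilde{w}$ is a bi-infinite labelled line. Because u-words are simply connected, the morphisms $\overline{v}\to w$ and $\overline{y}\to w$ lift to sub-arc embeddings $\tilde{v},\tilde{y}\hookrightarrow\tilde{w}$. The commutativity of the hypothesis diagram says the two induced maps $\overline{u}\to\tilde{\mathcal{F}}_{\varphi}(w)$ -- one through $\varphi(\overline{v})$ and one through $\varphi(\overline{y})$ -- agree, and by deck-translating $\tilde{y}$ we can arrange that the lifted maps $\overline{u}\to\tilde{\mathcal{F}}_{\varphi}(\tilde{w})$ coincide as well.

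Let $\tilde{z}=\tilde{v}\cap\tilde{y}$, a sub-arc of the line $\tilde{w}$. The central claim is that $\overline{u}\to\varphi(\overline{v})$ factors through the inclusion $\varphi(\tilde{z})\hookrightarrow\varphi(\overline{v})$ and that the resulting $\overline{u}\to\varphi(\tilde{z})$ is itself an ideal preimage. Granting this, $(\tilde{z},\overline{u}\to\varphi(\tilde{z}))$ is a sub ideal preimage of $\overline{u}\to\varphi(\overline{v})$ via the inclusion $\tilde{z}\hookrightarrow\tilde{v}$, so minimality forces $\tilde{z}=\tilde{v}$, i.e.\ $\tilde{v}\subseteq\tilde{y}$ inside $\tilde{w}$; composing with the covering map yields the desired morphism $\overline{v}\to\overline{y}$ together with the sub-ideal-preimage compatibility diagram. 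Upgrading $\overline{u}\to\varphi(\tilde{z})$ to an ideal preimage is routine: any extension $\tilde{z}\to\tilde{z}'$ can, after possibly enlarging $\tilde{z}'$ on each side using a matching end of $\tilde{v}$ or $\tilde{y}$, be dominated by a common extension inside which one of the two given ideal-preimage hypotheses supplies the required $\overline{u}\to\varphi(\tilde{z}')$, and the two patched sides agree by the commutativity of the hypothesis square.

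The main obstacle is the first half of the central claim: ruling out that edges of the image of $\overline{u}$ live in $\varphi(\tilde{v})\setminus\varphi(\tilde{z})$ or $\varphi(\tilde{y})\setminus\varphi(\tilde{z})$. The natural tool is the preceding Fact, identifying $\varphi(v)\subseteq\tilde{\mathcal{F}}_{\varphi}(v)$ as exactly the edges with $|\rho_v^{-1}(e)|$ odd, combined with the additivity $|\rho_{v_{1}\cdot v_{2}}^{-1}(e)|=|\rho_{v_{1}}^{-1}(e)|+|\rho_{v_{2}}^{-1}(e)|$ of the preceding Proposition. Decomposing a sufficiently large sub-arc of $\tilde{w}$ as a concatenation around $\tilde{z}$, feeding in the odd parities $|\rho_{\tilde{v}}^{-1}(e)|\equiv1$ and $|\rho_{\tilde{y}}^{-1}(e)|\equiv1$ for each edge $e$ in the image of $\overline{u}$, and using the ideal-preimage property of both sides applied to successive one-letter extensions inside $\tilde{w}$, should pin down $|\rho_{\tilde{z}}^{-1}(e)|$ as odd. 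The degenerate sub-case $\tilde{z}=\emptyset$ (where $\tilde{v}$ and $\tilde{y}$ are disjoint in $\tilde{w}$) needs a separate argument: extending both lifts to the convex hull of $\tilde{v}\cup\tilde{y}$ inside $\tilde{w}$ and invoking the two ideal-preimage properties forces the image of $\overline{u}$ to survive every cancellation in $\varphi$ of the gap between them, which turns out to be impossible once $\overline{u}$ carries a single edge, so this sub-case does not arise.
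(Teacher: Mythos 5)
Your overall route is a repackaging of the paper's argument rather than a different one: the paper also proves this lemma by comparing how $v$ and $y$ overlap inside (a representative of) $w$ and running the parity bookkeeping $|\rho_{v_{1}\cdot v_{2}}^{-1}(e)|=|\rho_{v_{1}}^{-1}(e)|+|\rho_{v_{2}}^{-1}(e)|$ together with the Fact that $e\in\varphi(v)$ iff $|\rho_{v}^{-1}(e)|$ is odd. Where the paper splits into four explicit cases ($y\subseteq v$; $y$ disjoint from $v$ in $w'=v\cdot w_{0}$; $y$ straddling; $y$ a cyclic permutate when $l(v)\geq l(w)$) and argues by contradiction --- minimality says the overlap $v_{2}=v\cap y$ is \emph{not} an ideal preimage, so some extension $x_{1}\cdot v_{2}\cdot x_{2}$ kills an edge, and the parities then contradict $y$ being an ideal preimage --- you run the same computation in the other direction: show the overlap \emph{is} an ideal preimage and let minimality force it to equal $v$. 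These are two sides of one calculation, and passing to the universal cover is a genuinely tidier way to define the overlap uniformly (it absorbs the paper's Case 4 into Case 3). That is a real, if modest, gain in organization.

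The problem is that your write-up defers exactly the steps that constitute the proof. The central claim --- that every edge in the image of $\overline{u}$ has $|\rho_{\tilde{z}}^{-1}(e)|$ odd, and that this parity survives \emph{every} extension $x_{1}\cdot\tilde{z}\cdot x_{2}$ --- is left at ``should pin down \dots as odd,'' and the disjoint case at ``turns out to be impossible.'' Both do go through, but you must actually write the chain: from $|\rho_{v}^{-1}(e)|$ odd and $|\rho_{v_{2}}^{-1}(e)|$ odd deduce $|\rho_{v_{1}}^{-1}(e)|$ even, hence $|\rho_{v_{2}\cdot x_{2}}^{-1}(e)|$ odd for all $x_{2}$ (using that $v$ is an ideal preimage), symmetrically $|\rho_{x_{1}\cdot v_{2}}^{-1}(e)|$ odd for all $x_{1}$ (using that $y$ is one), and combine; in the disjoint case the same additivity gives $|\rho_{w_{0}}^{-1}(e)|$ even, so no compatible $\overline{u}\to\varphi(\overline{w_{0}})$ (hence none to $\varphi(\overline{y})$ for $\overline{y}\to\overline{w_{0}}$) can exist. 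Two further points need justification. First, your deck-translation step: $\tilde{\mathcal{F}}_{\varphi}(\tilde{w})\to\tilde{\mathcal{F}}_{\varphi}(w)$ is only an immersion, not a covering, so ``lifts differ by a deck transformation'' is not automatic; you must use that the images of $\overline{u}$ lie in the subgraphs $\varphi(\tilde{w})\subseteq\tilde{\mathcal{F}}_{\varphi}(\tilde{w})$ and $\varphi(w)\subseteq\tilde{\mathcal{F}}_{\varphi}(w)$, where the restricted map is a genuine covering of the core circle, and then invoke uniqueness of lifts into a folded graph agreeing at a point. Second, the case $\tilde{y}\subseteq\tilde{z}=\tilde{y}\subsetneq\tilde{v}$ (your $\tilde{z}=\tilde{y}$, the paper's Case 1) should be dispatched directly by minimality before the parity argument, since there $\overline{u}\to\varphi(\overline{y})$ is itself the forbidden proper sub ideal preimage.
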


\begin{proof}
Assume by contradiction that it is not. Let $v,y$ be orientations
of $\overline{v},\overline{y}$ and let $w'$ be a cyclically reduced
representative of $w$. There are essentially 4 cases up to symmetry.
\begin{enumerate}
\item $y$ is a proper subword of $v$. 
\item Assume $l\left(v\right)<l\left(w'\right)$, write $w'=v\cdot w_{0}$
and consider $y$ to be $w_{0}$ or a subword of $w_{0}$. 
\item $y$ includes a subword both of $v$ and of $w_{0}$ . Write $v=v_{1}\cdot v_{2}$
and $w_{0}=w_{01}\cdot w_{02}$ and $y=v_{2}\cdot w_{01}$. 
\item Assume $l\left(v\right)\geq l\left(w\right)$ and $y$ is a cyclic
permutation of $v$ or a subword of a cyclic permutation of $w$.
\end{enumerate}
Case $1$: is in contradiction to the fact that $v$ is minimal. Case
2: There is graph morphism from $\overline{v\cdot w_{0}}$ the cyclic
word $w$.  Let $e$ be an edge in the image of $u\to\varphi\left(\overline{v\cdot w_{0}}\right)\to\mathcal{\tilde{F}}_{\varphi}\left(\overline{v\cdot w_{0}}\right)$.
Then $\left|\rho_{v\cdot w_{0}}^{-1}\left(e\right)\right|$ is odd.
Observe that $\left|\rho{}_{v}^{-1}\left(e\right)\right|$ is odd
because $v$ is an ideal preimage. Thus $\left|\rho_{w_{0}}^{-1}\left(e\right)\right|$is
even because 
\[
\left|\rho_{v\cdot w_{0}}^{-1}\left(e\right)\right|=\left|\rho{}_{v}^{-1}\left(e\right)\right|+\left|\rho{}_{w_{0}}^{-1}\left(e\right)\right|
\]
 Therefore there cannot be an ideal preimage $\overline{u}\to\varphi\left(\overline{w_{0}}\right)$.
Let $\overline{y}\to\overline{w_{0}}$. If $\overline{u}\to\varphi\left(\overline{y}\right)$
is an ideal preimage then so is $\overline{u}\to\varphi\left(\overline{w_{0}}\right)$
this a contradiction. Case 3: let $e$ be an edge in the image of
$\overline{u}$ then $\left|\rho_{v_{2}\cdot w_{01}}^{-1}\left(e\right)\right|$
is odd by assumption. We observe that 
\[
\left|\rho_{v\cdot w_{01}}^{-1}\left(e\right)\right|=\left|\rho{}_{v}^{-1}\left(e\right)\right|+\left|\rho{}_{w_{01}}^{-1}\left(e\right)\right|
\]
 because $\left|\rho_{v\cdot w_{01}}^{-1}\left(e\right)\right|$ and
$\left|\rho{}_{v}^{-1}\left(e\right)\right|$ are odd then $\left|\rho{}_{w_{01}}^{-1}\left(e\right)\right|$
is even. We observe that
\[
\left|\rho_{v_{2}\cdot w_{01}}^{-1}\left(e\right)\right|=\left|\rho{}_{v_{2}}^{-1}\left(e\right)\right|+\left|\rho{}_{w_{01}}^{-1}\left(e\right)\right|
\]
and $\left|\rho_{v_{2}\cdot w_{01}}^{-1}\left(e\right)\right|$ is
odd and $\left|\rho{}_{w_{01}}^{-1}\left(e\right)\right|$ is even
then $\left|\rho{}_{v_{2}}^{-1}\left(e\right)\right|$ is odd. We
chose $e$ arbitrarily thus for every edge $e$ in the image of $\overline{u}$
we get $\left|\rho{}_{v_{2}}^{-1}\left(e\right)\right|$ is odd. Therefore
there is a graph morphism $\overline{u}\to\varphi\left(\overline{v_{2}}\right)\to\mathcal{\tilde{F}}_{\varphi}\left(\overline{v_{2}}\right)$.
Because $\overline{u}\to\varphi\left(\overline{v}\right)$ is minimal
$\overline{u}\to\varphi\left(\overline{v_{2}}\right)$ is not an ideal
preimage thus there are words $x_{1}$ and $x_{2}$ with $x_{1}\cdot v_{2}\cdot x_{2}$
and an edge $e$ in the image of $\overline{u}$ s.t. $\left|\rho_{x_{1}\cdot v_{2}\cdot x_{2}}^{-1}\left(e\right)\right|$
is even. From the fact that 
\[
\left|\rho_{v\cdot x_{2}}^{-1}\left(e\right)\right|=\left|\rho{}_{v}^{-1}\left(e\right)\right|+\left|\rho{}_{x_{2}}^{-1}\left(e\right)\right|
\]
we get that $\left|\rho{}_{x_{2}}^{-1}\left(e\right)\right|$ is even
thus we get that $\left|\rho{}_{x_{1}\cdot v_{2}}^{-1}\left(e\right)\right|$
must be even as well. Finally we have 
\[
\left|\rho_{x_{1}\cdot v_{2}\cdot w_{01}}^{-1}\left(e\right)\right|=\left|\rho{}_{x_{1}\cdot v_{2}}^{-1}\left(e\right)\right|+\left|\rho{}_{w_{01}}^{-1}\left(e\right)\right|
\]
but we saw that $\left|\rho{}_{x_{1}\cdot v_{2}}^{-1}\left(e\right)\right|$
is even and $\left|\rho{}_{w_{01}}^{-1}\left(e\right)\right|$ is
even thus we get that $\left|\rho_{x_{1}\cdot v_{2}\cdot w_{01}}^{-1}\left(e\right)\right|$
is even and this is a contradiction to $v_{2}\cdot w_{01}$ being
an ideal preimage. Case 4: is proved in an almost identical way to
Case 3.
\end{proof}
\begin{lem}
Let $\overline{u}\to\varphi\left(\overline{v}\right)$ be a minimal
ideal-preimage and let $S$ be a finite full set that does not include
$\overline{u}\to\varphi\left(\overline{v}\right)$. Then $S$ is not
minimal i.e. it contains a full paradigm. 
\end{lem}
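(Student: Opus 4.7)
The plan is to combine the fullness of $S$ with Lemma \ref{lem:minimalPP} to first isolate the subset of $S$ that ``sees'' $\overline{v}$, then use a left/right direction choice to restrict attention to one side, and finally locate a full paradigm inside $S$ by a greedy extension along that side.

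First I would introduce the set $T\subseteq S$ consisting of those $(\overline{y},\alpha)\in S$ for which there is an embedding $\overline{v}\to\overline{y}$ realising $\overline{u}\to\varphi(\overline{v})$ as a sub-ideal-preimage of $\alpha$. For any cyclic word $w$ with $\overline{v}\to w$, the induced morphism $\overline{u}\to\varphi(w)$ has a unique covering element of $S$, and Lemma \ref{lem:minimalPP} forces this covering to lie in $T$. Hence $T$ is non-empty, finite (being a subset of the finite $S$), and by hypothesis the ideal preimage $\overline{u}\to\varphi(\overline{v})$ itself is not in $T$, so every element of $T$ is a proper left or right extension of $\overline{v}$.

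Next I would establish a direction-choice claim: after possibly swapping left and right, for every letter $x$ with $v\cdot x$ reduced, some element of $T$ extends $\overline{v}$ on the right starting with $x$. If instead this failed for some right-letter $x_{0}$ and simultaneously the analogous left statement failed for some left-letter $z_{0}$, I would construct a cyclic word $w$ whose only occurrence of $\overline{v}$ is preceded by $z_{0}$ and followed by $x_{0}$. The covering of the corresponding morphism in $T$ would then be forced either to extend $\overline{v}$ on the right starting with $x_{0}$ or on the left starting with $z_{0}$, contradicting both failure assumptions.

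With the right direction fixed, I would then perform a greedy extension: set $\overline{v}^{(0)}=\overline{v}$ and at step $i$ test whether $\overline{u}\to\varphi(\overline{v}^{(i)}\cdot x)\in S$ for every letter $x$ with $v^{(i)}\cdot x$ reduced. If so, these ideal preimages assemble into a right full paradigm of $\overline{u}\to\varphi(\overline{v}^{(i)})$ contained in $S$, and we are done. Otherwise I pick any missing $x$ and set $\overline{v}^{(i+1)}=\overline{v}^{(i)}\cdot x$.

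The main obstacle will be proving that this greedy procedure terminates. I expect termination to come from the finiteness of $T$ combined with the uniqueness clause in the definition of a full set: once $\overline{v}^{(i)}$ is longer than every base word appearing in $T$, no element of $T$ can contain $\overline{v}^{(i)}$ as a subword, which sharply constrains how cyclic words extending $\overline{v}^{(i)}$ are covered. A careful accounting that tracks which cyclic words each $y\in T$ is responsible for and reapplies Lemma \ref{lem:minimalPP} step by step along the greedy chain should ultimately force every right one-letter extension of some $\overline{v}^{(i)}$ into $S$ before $i$ outgrows the length bound coming from $T$. Pinning down this bookkeeping so that the terminating $\overline{v}^{(i)}$ is genuinely the parent of a paradigm inside $S$ is where I expect the argument to demand the most care.
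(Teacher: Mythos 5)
Your opening move is the same as the paper's: use Lemma \ref{lem:minimalPP} to show that for any cyclic word $w$ containing $\overline{v}$ (with the induced morphism $u\to\varphi(w)$), the unique covering element of $S$ must contain $\overline{u}\to\varphi(\overline{v})$ as a sub ideal preimage, so attention restricts to the subset $T$ of such elements, all of which are proper two-sided extensions of $\overline{v}$. That part is sound. The gap is in the second half: a full paradigm contained in $S$ need not be a \emph{right} paradigm rooted anywhere on a single rightward chain out of $\overline{v}$, so the ``fix one direction, then greedily extend'' strategy is structurally insufficient, not merely hard to bookkeep. Concretely, take $S$ obtained from the minimal full set by replacing $\overline{u}\to\varphi(\overline{v})$ with its right paradigm $\{\overline{vx}\}_x$ and then replacing the single element $\overline{u}\to\varphi(\overline{vx_1})$ with its \emph{left} paradigm $\{\overline{zvx_1}\}_z$. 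Your direction-choice claim is satisfied on the right (every right letter $x$ is the start of some right extension in $T$, including $x_1$ via $\overline{zvx_1}$), so your procedure commits to the right, reaches $\overline{v}^{(1)}=\overline{vx_1}$, and then extends rightward forever: no $\overline{vx_1y}$ is ever in $S$, while the actual paradigm $\{\overline{zvx_1}\}_z$ sits one step to the \emph{left} of a node on your chain and is never inspected. So the termination worry you flag at the end is not a detail to be pinned down; the procedure as stated does not terminate.

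The paper avoids this by making the problem finite and genuinely two-dimensional. It sets $m$ to be the maximal base-word length in $S$, considers only the finitely many two-sided extensions $\overline{x_1vx_2}$ with $l(x_1v)=l(vx_2)=m$ and the cyclic words they generate, and then translates the covering data into a subset $X$ of a product $T_1\times T_2$ of two rooted trees (left extensions times right extensions) with the property that every maximal element has a unique element of $X$ below it. The paradigm is then cornered by a double extremal argument: choose $b_1$ of maximal length in the right projection, then $a_1$ of minimal length among its left partners, and analyze the neighbors of $b_1$; the uniqueness clause forces either a right paradigm at $a_1$ or, after a second extremal choice $c_1$, a left paradigm over $b_2$. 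If you want to repair your argument you will need both ingredients you are missing: the truncation at length $m$ (so the search space is finite) and a search that can switch between the left and right coordinates, which is essentially what the paper's extremal choices accomplish.
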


\begin{proof}
Let $m=\max\left\{ l\left(\overline{v}\right)|\overline{u}\to\varphi\left(\overline{v}\right)\in S\right\} $
the maximal length of the base words in the full set. Let $v$ be
an orientation of $\overline{v}$ an let 
\[
M=\left\{ \overline{x_{1}vx_{2}}\in F_{n}|\,x_{1}\cdot v\cdot x_{2},l\left(x_{1}v\right)=l\left(vx_{2}\right)=m\right\} 
\]
 a set of u-words. We notice that the u-words in $M$ inherit a structure
of $\varphi$ ideal preimages of $\overline{u}$ from the embedding
of $\overline{v}$. We construct a set of cyclic words $\overline{M}$
as following: If $w\in M$ is cyclically reduced we glue its ends;
If $w\in M$ is not cyclically reduced we add a letter to the end
before we glue. From the construction of $\overline{M}$ every cyclic
$w\in\overline{M}$ word has a graph morphism $\overline{v}\to w$.
Because $\overline{u}\to\varphi\left(\overline{v}\right)$ is an ideal
preimage every cyclic word $w\in\overline{M}$ has a morphism $\overline{u}\to\varphi\left(w\right)$
satisfying the appropriate diagram. From the second property of a
full set every $w\in\overline{M}$ has unique ideal preimage $\overline{u}\to\varphi\left(\overline{v_{w}}\right)$
with $\overline{v_{w}}\to w$ satisfying the correct diagram. We show
that the set $X=\left\{ \overline{u}\to\varphi\left(\overline{v_{w}}\right)|w\in\overline{M}\right\} \subset S$
contains a full paradigm. By Lemma \ref{lem:minimalPP} every $\overline{u}\to\varphi\left(\overline{v_{w}}\right)$
has $\overline{u}\to\varphi\left(\overline{v}\right)$ as a sub ideal
preimage. Particularly there is a graph morphism $\overline{v}\to\overline{v_{w}}\to w$.
By assumption $\Lambda\left(\overline{v_{w}}\right)\leq m$ and by
the construction $M$ and $\overline{M}$ there must be a $\overline{y}\in M$
such that $\overline{v}\to\overline{v_{w}}\to\overline{y}\to w$.
We now translate the problem to the language of Posets. We define
a Poset on the set $P=\left\{ \overline{x}|y\in M,\,\overline{v}\to\overline{x}\to\overline{y}\right\} $
(the composition $\overline{v}\to\overline{x}\to\overline{y}$ is
the original embedding of $\overline{v}$ in $\overline{y}$) by $x_{1}\leq x_{2}$
if and only if
\[
\xymatrix{ & \overline{v}\ar[ld]\ar[rd]\\
\overline{x_{1}}\ar[rr] &  & \overline{x_{2}}
}
\]
 We notice that $\left\{ \overline{v_{w}}|\overline{u}\to\varphi\left(\overline{v_{w}}\right)\in X\right\} \subset P$
, $\overline{v}$ is the minimal element of $P$ and $M$ is the set
of maximal elements of $P$. As consequence of the second property
of a full set and the discussion above for all $y\in M$ there exists
a unique word $x\in X$ such that $x\leq y$. We define posets $T_{2}=\left\{ x|v\cdot x\right\} $
with $x_{1}\leq x_{2}$ if and only if $x_{1}$ is a prefix of $x_{2}$
and $T_{1}=\left\{ x|x\cdot v\right\} $ with $x_{1}\leq x_{2}$ if
and only if $x_{1}$ is a suffix of $x_{2}$. We define the order
$\left(x_{1},x_{2}\right)\leq\left(y_{1},y_{2}\right)$ if and only
if $x_{1}\leq y_{1}\wedge x_{2}\leq y_{2}$ on $T_{1}\times T_{2}$
and we notice that $P$ can be seen as a suborder of $T_{1}\times T_{2}$.
Let $T$ be the poset defined by a directed $2n-1$ regular tree such
that the root of $T$ is the minimal element then $T\cong T_{1}\cong T_{2}$.
A poset $T$ that is defined by a directed tree is characterized by
the following property: for all $a\in T$ the subposet $\left\{ b\in T|b\leq a\right\} $
is a total order. We denote the root, the minimal element, of $T$
by $r$ . We define $l:T\to\mathbb{N}$ by setting $l\left(a\right)$
to be the distance of $a$ from the root of the tree. We notice that
$l$ is equivalent to the length of a word in $T_{1}$ and $T_{2}$.
We say $a,b\in T$ are neighbors if there exists an element $c\in T$
such that $c<a\wedge c<b$ and $l\left(a\right)=l\left(b\right)=l\left(c\right)+1$.
In $T_{2}$ or $T_{1}$ two words are neighbors if they only differ
by the last or first letter. This motivates the next definition. Let
$a,b_{0}\in A$. A subset of $T\times T$ is said to be a full paradigm
if it has the form 
\[
\left\{ \left(a,b\right)\in T\times T|b_{0}<b,l\left(b\right)=l\left(b_{0}\right)+1\right\} 
\]
or 
\[
\left\{ \left(b,a\right)\in T\times T|b_{0}<b,l\left(b\right)=l\left(b_{0}\right)+1\right\} 
\]
We complete the translation by defining 
\[
P=\left\{ \left(a,b\right)\in T\times T|l\left(a\right)+l\left(b\right)\leq2m-l\left(v\right)\right\} 
\]
 and 
\[
M=\left\{ \left(a,b\right)\in T\times T|l\left(a\right)=l\left(b\right)=m-l\left(v\right)\right\} 
\]
 and $X\subset P$ is a set that satisfies for all $y\in M$ there
exists a unique element $x\in X$ such that $x\leq y$. We need to
show that if $\left(r,r\right)\nin X$ then $X$ contains a full paradigm. 

Let 
\[
X_{R}=\left\{ b\in T|\exists a\in T\,\left(a,b\right)\in X\right\} 
\]
 and let 
\[
X_{R}^{m}=\left\{ b\in X_{R}|\forall c\in X_{R}\:l\left(c\right)\leq l\left(b\right)\right\} 
\]
 the set of elements of maximal length in $X_{R}$. Let 
\[
X_{LX_{R}^{m}}=\left\{ a\in T|\exists b\in X_{R}^{m}\;\left(a,b\right)\in X\right\} 
\]
 Let $\left(a_{1},b_{1}\right)\in X$ such that $b_{1}\in X_{R}^{m}$
and $a_{1}\in X_{LX_{R}^{m}}$ is such that $l\left(a_{1}\right)$
is minimal among $X_{LX_{R}^{m}}$. Let $\oton b{2n-1}$ the neighbors
of $b_{1}$. Let 
\[
\left(a_{1}^{m},b_{2}^{m}\right)\in\left\{ \left(a,b\right)\in P|\left(a_{1},b_{2}\right)<\left(a,b\right)\right\} \cap M
\]
 then there is an element in $\left(x,y\right)\in X$ such that $\left(x,y\right)\leq\left(a_{1}^{m},b_{2}^{m}\right)$
i.e. $x\leq a_{1}^{m}$ and $y\leq b_{2}^{m}$. We recall that $\left\{ x\in T|x\leq a_{1}^{m}\right\} $
and $\left\{ y\in T|y\leq b_{2}^{m}\right\} $ are total orders because
$T$ is a tree. The element $b_{2}$ is a neighbor of $b_{1}$ therefore
it is maximal and $y\leq b_{2}$. By way of contradiction suppose
$y<b_{2}$ then $y<b_{1}$ because $b_{1}$ is a neighbor of $b_{2}$.
If $x\leq a_{1}$ then $\left(x,y\right)<\left(a_{1},b_{1}\right)$
this is a contradiction to the uniqueness in the definition of $X$.
If $x>a_{1}$ then $\left(x,y\right)<\left(x,b_{1}\right)$ and $\left(a_{1},b_{1}\right)<\left(x,b_{1}\right)$
also a contradiction to the uniqueness thus $y=b_{2}$. We chose $a_{1}$
to be minimal among $X_{LX_{R}^{m}}$ therefore $x\geq a_{1}$. If
$x=a_{1}$ and $\left(a_{1},b_{1}\right),\left(a_{1},b_{2}\right),\dots,\left(a_{1},b_{2n-1}\right)\in X$
then $X$ has a full paradigm and we are done. If not, without loss
of generality, we can assume $\left(a_{1},b_{2}\right)\nin X$ and
$a_{1}<x$. Let 
\[
X_{Lb_{2}}=\left\{ a\in T|\left(a,b_{2}\right)\in X\right\} 
\]
 and let $c_{1}\in X_{Lb_{2}}$an element such that $l\left(c_{1}\right)$
is maximal among $X_{Lb_{2}}$ then $\left(c_{1},b_{2}\right)\in X$
and $a_{1}<c_{1}$ . Let $c_{2}$ be a neighbor of $c_{1}$ and let
\[
\left(c_{2}^{m},b_{2}^{m}\right)\in\left\{ \left(a,b\right)\in P|\left(c_{2},b_{2}\right)<\left(a,b\right)\right\} \cap M
\]
then there is an element $\left(x,y\right)\in X$ such that $\left(x,y\right)\leq\left(c_{2}^{m},b_{2}^{m}\right)$
(this is a different $\left(x,y\right)$ then before). By way of contradiction
assume $y<b_{2}$ then $y<b_{1}$ because $b_{1}$ is a neighbor of
$b_{2}$. Because $c_{2}$ is a neighbor of $c_{1}$ and $a_{1}<c_{1}$
then $a_{1}<c_{2}$. Because $\left\{ x|x\leq c_{2}^{m}\right\} $
is a total order and $a_{1}\in\left\{ x|x\leq c_{2}^{m}\right\} $
then either $x<a_{1}$ or $x\geq a_{1}$. If $x<a_{1}$ then $\left(x,y\right)<\left(a_{1},b_{1}\right)$
and if $a_{1}\leq x$ then $\left(a_{1},b_{1}\right)\leq\left(x,b_{1}\right)$
and $\left(x,y\right)<\left(x,b_{1}\right)$ both contradict the uniqueness
condition. Thus $y\geq b_{2}$ but $b_{2}$ is maximal so $y=b_{2}$.
By way of contradiction assume $x<c_{2}$. Because $c_{2}$ is a neighbor
of $c_{1}$ then $x<c_{1}$ and we conclude $\left(x,b_{2}\right)<\left(c_{1},b_{2}\right)$
which is a contradiction to the uniqueness condition of $X$. Thus
$x\geq c_{2}$ but $c_{2}$ is maximal because it is a neighbor of
$c_{1}$ which is maximal so $x=c_{2}$. We get that $\left(c_{2},b_{2}\right)\in X$
but $c_{2}$ was an arbitrary neighbor of $c_{1}$ therefore for all
$c_{i}$ a neighbor of $c_{1}$ we have $\left(c_{i},b_{2}\right)\in X$
i.e. $X$ contains a full paradigm.
\end{proof}

We now show that every automorphism $\varphi$ and u-word $u$ has
a finite full set of ideal preimages. We call an automorphism $\varphi$
a good automorphism if every $u$ has a finite full set of $\varphi$
ideal preimages. We will show that if $\varphi$ and $\psi$ are good
automorphisms then $\varphi\circ\psi$ is also a good automorphism.
We conclude by showing that the Nielsen move $x_{1}\mapsto x_{1}x_{2},x_{j}\mapsto x_{j}$
for $j\neq1$ is a good automorphism by showing how to construct a
full set of ideal preimages for every $u$. It is trivial that the
other Nielsen moves are good. Thus a generating set of automorphisms
are good and all automorphisms generated by it. 
\begin{prop}
Let $\varphi=\psi_{1}\circ\psi_{2}$ be an automorphism such that
$\psi_{1},\psi_{2}$ are both good automorphisms then $\varphi$ is
a good automorphism.
\end{prop}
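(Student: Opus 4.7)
The plan is to build a finite full set of $\varphi$-ideal preimages for any u-word $u$ by composing the finite full sets guaranteed by the goodness of $\psi_1$ and $\psi_2$. Fix a finite full set $\{u\to\psi_1(v_\alpha)\}_{\alpha\in A}$ of $\psi_1$-ideal preimages of $u$, and for each $\alpha$ a finite full set $\{v_\alpha\to\psi_2(w_{\alpha\beta})\}_{\beta\in B_\alpha}$ of $\psi_2$-ideal preimages of $v_\alpha$. The morphism $v_\alpha\to\psi_2(w_{\alpha\beta})$ yields, through the functor $\tilde{\mathcal{F}}_{\psi_1}$, a morphism $\tilde{\mathcal{F}}_{\psi_1}(v_\alpha)\to\tilde{\mathcal{F}}_{\psi_1}(\psi_2(w_{\alpha\beta}))=\tilde{\mathcal{F}}_\varphi(w_{\alpha\beta})$ taking the line segment $\psi_1(v_\alpha)$ into $\varphi(w_{\alpha\beta})$. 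Composing with $u\to\psi_1(v_\alpha)$ produces $u\to\varphi(w_{\alpha\beta})$. Take $S=\{u\to\varphi(w_{\alpha\beta})\mid\alpha\in A,\beta\in B_\alpha\}$; it is finite, with $|S|\le\sum_\alpha|B_\alpha|$.

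Next I would check that each element of $S$ is in fact a $\varphi$-ideal preimage. Given any u-word $w$ with $w_{\alpha\beta}\to w$, the $\psi_2$-ideal property of $v_\alpha\to\psi_2(w_{\alpha\beta})$ produces $v_\alpha\to\psi_2(w)$ together with its $\psi_2$ compatibility square; since $\psi_2(w)$ is again a u-word, the $\psi_1$-ideal property of $u\to\psi_1(v_\alpha)$ applied to $v_\alpha\to\psi_2(w)$ produces $u\to\psi_1(\psi_2(w))=\varphi(w)$ together with its $\psi_1$ compatibility square. Pasting the two squares along $\tilde{\mathcal{F}}_{\psi_1}(v_\alpha\to\psi_2(w))$ yields the $\varphi$ square demanded by the definition of ideal preimage.

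Fullness of $S$ uses the reverse strategy. Given a cyclic word $W$ with $u\to\varphi(W)$, observe that $\psi_2(W)$ is itself a cyclic word and the morphism can be read as $u\to\psi_1(\psi_2(W))$; the $\psi_1$-full-set property then produces a unique $\alpha$ and a compatible morphism $v_\alpha\to\psi_2(W)$, and the $\psi_2$-full-set property for $v_\alpha$, applied to the cyclic word $W$ and the morphism $v_\alpha\to\psi_2(W)$ just obtained, produces a unique $\beta$ and a compatible $w_{\alpha\beta}\to W$. Pasting the two diagrams shows that $u\to\varphi(w_{\alpha\beta})\in S$ witnesses $W$ via $w_{\alpha\beta}\to W$, and uniqueness of $(\alpha,\beta)$ propagates from the uniqueness at each step. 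The main obstacle in all of this is the diagrammatic bookkeeping, above all the identification $\tilde{\mathcal{F}}_{\psi_1}\circ\tilde{\mathcal{F}}_{\psi_2}=\tilde{\mathcal{F}}_\varphi$ that enables the pasting: this rests on the fact that $\mathcal{F}_{\psi_1}\circ\mathcal{F}_{\psi_2}=\mathcal{F}_\varphi$ on each edge (both replace an edge labeled $x$ by a path labeled $\psi_1(\psi_2(x))=\varphi(x)$) together with order-independence of folding, and once it and the naturality of the folding morphism $\rho$ are secured, the rest of the argument reduces to routine diagram chasing.
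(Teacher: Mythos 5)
Your overall strategy is exactly the paper's: compose the two finite full sets, check the composite morphisms are $\varphi$-ideal preimages by pasting the $\psi_{1}$- and $\psi_{2}$-compatibility squares, and get existence and uniqueness in the fullness condition by factoring $u\to\varphi\left(W\right)$ first through the $\psi_{1}$-full set and then through the $\psi_{2}$-full sets. The gap is the identification you lean on for all the pasting, namely $\tilde{\mathcal{F}}_{\psi_{1}}\circ\tilde{\mathcal{F}}_{\psi_{2}}=\tilde{\mathcal{F}}_{\psi_{1}\circ\psi_{2}}$, together with the claim that $\mathcal{F}_{\psi_{1}}\circ\mathcal{F}_{\psi_{2}}=\mathcal{F}_{\varphi}$ edge by edge. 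This is false in general. On an edge labeled $x$, $\mathcal{F}_{\psi_{2}}$ produces a reduced path spelling $\psi_{2}\left(x\right)=y_{1}\cdots y_{m}$, and $\mathcal{F}_{\psi_{1}}$ then replaces each $y_{j}$ by a reduced path spelling $\psi_{1}\left(y_{j}\right)$; the concatenation spells the \emph{unreduced} product $\psi_{1}\left(y_{1}\right)\cdots\psi_{1}\left(y_{m}\right)$, not the reduced word $\varphi\left(x\right)$. For instance with $\psi_{2}\colon x\mapsto xy$ and $\psi_{1}\colon x\mapsto xy^{-1},\,y\mapsto y$ one has $\varphi\left(x\right)=x$, yet $\mathcal{F}_{\psi_{1}}\mathcal{F}_{\psi_{2}}$ turns the edge $x$ into a path labeled $xy^{-1}y$, which folds to a segment of length $2$, whereas $\tilde{\mathcal{F}}_{\varphi}$ gives a single edge. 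Moreover the intermediate folding in $\tilde{\mathcal{F}}_{\psi_{2}}$ identifies edges before $\mathcal{F}_{\psi_{1}}$ is applied, which is a second independent source of discrepancy; the paper explicitly flags $\tilde{\mathcal{F}}_{\psi_{1}}\circ\tilde{\mathcal{F}}_{\psi_{2}}\neq\tilde{\mathcal{F}}_{\psi_{1}\circ\psi_{2}}$ and cites Proposition 3.9 of \cite{MR4187248}.

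Because the definition of $\varphi$-ideal preimage is stated in terms of $\tilde{\mathcal{F}}_{\varphi}=\tilde{\mathcal{F}}_{\psi_{1}\circ\psi_{2}}$, while your pasted squares live in $\tilde{\mathcal{F}}_{\psi_{1}}\circ\tilde{\mathcal{F}}_{\psi_{2}}$, you still owe an argument transporting commutativity from one to the other. The paper supplies it: there are natural comparison morphisms $\tilde{\mathcal{F}}_{\psi_{1}\circ\psi_{2}}\left(w_{1}\right)\to\tilde{\mathcal{F}}_{\psi_{1}}\circ\tilde{\mathcal{F}}_{\psi_{2}}\left(w_{1}\right)$ commuting with the maps induced by $w_{1}\to w_{2}$ and with the inclusions of $\psi_{1}\circ\psi_{2}\left(w_{1}\right)$, and for a u-word $w$ the map $\tilde{\mathcal{F}}_{\psi_{1}\circ\psi_{2}}\left(w\right)\to\tilde{\mathcal{F}}_{\psi_{1}}\circ\tilde{\mathcal{F}}_{\psi_{2}}\left(w\right)$ is injective (a locally injective map of trees), so equality of the two composites $u\to\tilde{\mathcal{F}}_{\psi_{1}}\circ\tilde{\mathcal{F}}_{\psi_{2}}\left(w\right)$ can be pulled back to equality in $\tilde{\mathcal{F}}_{\psi_{1}\circ\psi_{2}}\left(w\right)$. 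The same device is needed in your uniqueness step, where a competing $\varphi$-ideal preimage satisfying the $\tilde{\mathcal{F}}_{\varphi}$-diagram must be shown to satisfy the pasted $\psi_{1}$- and $\psi_{2}$-diagrams before the two uniqueness hypotheses can be invoked. Once you replace the false functor identity with these comparison maps and the tree-injectivity argument, the rest of your diagram chase goes through as the paper's does.
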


\begin{proof}
Let $u$ be a u-word then there is a $\psi_{1}$- full set $u\to\psi_{1}\left(v_{1}\right),\dots,u\to\psi_{1}\left(v_{l}\right)$
and every $v_{i}$ has a $\psi_{2}$ full set $v_{i}\to\psi_{2}\left(v_{i}^{1}\right),\dots,v_{i}\to\psi_{2}\left(v_{i}^{l_{i}}\right)$.
Let $w$ be a cyclic word with $v_{i}^{k}\to w$ then we have  \begin{equation} \label{eq:vik}
\begin{gathered}
\xymatrix{ & v_{i}\ar[ld]\ar[rd]\\ \psi_{2}\left(v_{i}^{k}\right)\ar[d] &  & \psi_{2}\left(w\right)\ar[d]\\ \tilde{\mathcal{F}}_{\psi_{2}}\left(v_{i}^{k}\right)\ar[rr] &  & \mathcal{\tilde{F}}_{\psi_{2}}\left(w\right) } 
\end{gathered} 
\end{equation}

From $v_{i}\to\psi_{2}\left(w\right)$ we get the diagram \begin{equation} \label{eq:vi}
\begin{gathered}
\xymatrix{ & u\ar[ld]\ar[rd]\\ \psi_{1}\left(v_{i}\right)\ar[d] &  & \psi_{1}\circ\psi_{2}\left(w\right)\ar[d]\\ \tilde{\mathcal{F}}_{\psi_{1}}\left(v_{i}\right)\ar[rr] &  & \mathcal{\tilde{F}}_{\psi_{1}}\left(\psi_{2}\left(w\right)\right) } 
\end{gathered} 
\end{equation}

From $v_{i}\to\psi_{2}\left(v_{i}^{k}\right)$ we get 
\[
\xymatrix{ & u\ar[ld]\ar[rd]\\
\psi_{1}\left(v_{i}\right)\ar[d] &  & \psi_{1}\circ\psi_{2}\left(v_{i}^{k}\right)\ar[d]\\
\mathcal{\tilde{F}}_{\psi_{1}}\left(v_{i}\right)\ar[rr] &  & \mathcal{\tilde{F}}_{\psi_{1}}\left(\psi_{2}\left(v_{i}^{k}\right)\right)
}
\]
 We combine these three diagrams to one diagram after applying the
functor $\tilde{\mathcal{F}}_{\psi_{1}}$ to the first diagram and
we get:
\[
\xymatrix{ & u\ar[ld]\ar[d]\ar[dr]\\
\psi_{1}\circ\psi_{2}\left(v_{i}^{k}\right)\ar[d] & \psi_{1}\left(v_{i}\right)\ar[d] & \psi_{1}\circ\psi_{2}\left(w\right)\ar[d]\\
\mathcal{\tilde{F}}_{\psi_{1}}\left(\psi_{2}\left(v_{i}^{k}\right)\right)\ar[d] & \tilde{\mathcal{F}}_{\psi_{1}}\left(v_{i}\right)\ar[r]\ar[l] & \mathcal{\tilde{F}}_{\psi_{1}}\left(\psi_{2}\left(w\right)\right)\ar[d]\\
\tilde{\mathcal{F}}_{\psi_{1}}\circ\tilde{\mathcal{F}}_{\psi_{2}}\left(v_{i}^{k}\right)\ar[rr] &  & \tilde{\mathcal{F}}_{\psi_{1}}\circ\mathcal{\tilde{F}}_{\psi_{2}}\left(w\right)
}
\]
We notice that generally $\tilde{\mathcal{F}}_{\psi_{1}}\circ\tilde{\mathcal{F}}_{\psi_{2}}\neq\tilde{\mathcal{F}}_{\psi_{1}\circ\psi_{2}}$
(see Proposition 3.9 in \cite{MR4187248}) thus the previous diagram
is not sufficient to show that $u\to\psi_{1}\circ\psi_{2}\left(v_{i}^{k}\right)$
is an ideal preimage. In order to prove that $u\to\psi_{1}\circ\psi_{2}\left(v_{i}^{k}\right)$
is ideal preimage we need three more commutative diagrams. For every
labeled graphs $w_{1}$ and $w_{2}$ with $w_{1}\to w_{2}$ there
is a commutative diagram 
\[
\xymatrix{\mathcal{\tilde{F}}_{\psi_{1}\circ\psi_{2}}\left(w_{1}\right)\ar[r]\ar[d] & \mathcal{\tilde{F}}_{\psi_{1}\circ\psi_{2}}\left(w_{2}\right)\ar[d]\\
\tilde{\mathcal{F}}_{\psi_{1}}\circ\tilde{\mathcal{F}}_{\psi_{2}}\left(w_{1}\right)\ar[r] & \tilde{\mathcal{F}}_{\psi_{1}}\circ\tilde{\mathcal{F}}_{\psi_{2}}\left(w_{2}\right)
}
\]
 For every u-word or cyclic word $w_{1}$ we have a commutative diagram
\[
\xymatrix{\psi_{1}\circ\psi_{2}\left(w_{1}\right)\ar[r]\ar[d] & \mathcal{\tilde{F}}_{\psi_{1}\circ\psi_{2}}\left(w_{1}\right)\ar[d]\\
\mathcal{\tilde{F}}_{\psi_{1}}\left(\psi_{2}\left(w_{1}\right)\right)\ar[r] & \mathcal{\tilde{F}}_{\psi_{1}}\circ\mathcal{\tilde{F}}_{\psi_{2}}\left(w_{1}\right)
}
\]
We combine all the commuting diagrams together to get Diagram \ref{eq:bigdiagram} 
\begin{equation}
\label{eq:bigdiagram}
\begin{gathered}
\begin{tikzcd}
& u \\ 
{\psi_{1}\circ\psi_{2}\left(v_{i}^{k}\right)} & {\psi_{1}\left(v_{i}\right)} & {\psi_{1}\circ\psi_{2}\left(w\right)} \\ 
{ \tilde{\mathcal{F}}_{\psi_{1}}\left(\psi_{2}\left(v_{i}^{k}\right)\right)} & {\tilde{\mathcal{F}}_{\psi_{1}}\left(v_{i}\right)} & {\tilde{\mathcal{F}}_{\psi_{1}}\left(\psi_{2}\left(w\right)\right)} \\ 
{\tilde{\mathcal{F}}_{\psi_{1}}\circ\tilde{\mathcal{F}}_{\psi_{2}}\left(v_{i}^{k}\right)} && {\tilde{\mathcal{F}}_{\psi_{1}}\circ\tilde{\mathcal{F}}_{\psi_{2}}\left(w\right)} \\ 
{\tilde{\mathcal{F}}_{\psi_{1}\circ\psi_{2}}\left(v_{i}^{k}\right)} && {\tilde{\mathcal{F}}_{\psi_{1}\circ\psi_{2}}\left(w\right)}
\arrow[from=1-2, to=2-1] 
\arrow[from=1-2, to=2-2] 
\arrow[from=1-2, to=2-3] 
\arrow[from=2-1, to=3-1] 
\arrow[from=2-3, to=3-3] 
\arrow[from=2-2, to=3-2] 
\arrow[from=3-2, to=3-1] 
\arrow[from=3-2, to=3-3] 	
\arrow[from=3-1, to=4-1] 	
\arrow[from=3-3, to=4-3] 	
\arrow[from=5-1, to=5-3] 	
\arrow[from=5-3, to=4-3] 	
\arrow[from=5-1, to=4-1] 	
\arrow[shift left=5, curve={height=-30pt}, from=2-3, to=5-3] 	\arrow[shift right=5, curve={height=30pt}, from=2-1, to=5-1] 	
\arrow[from=4-1, to=4-3] 
\end{tikzcd} 
\end{gathered}
\end{equation} Using all the commutative diagrams above one can observe that 
\begin{align*}
 & u\to\psi_{1}\circ\psi_{2}\left(v_{i}^{k}\right)\to\tilde{\mathcal{F}}_{\psi_{1}\circ\psi_{2}}\left(v_{i}^{k}\right)\to\tilde{\mathcal{F}}_{\psi_{1}\circ\psi_{2}}\left(w\right)\to\tilde{\mathcal{F}}_{\psi_{1}}\circ\tilde{\mathcal{F}}_{\psi_{2}}\left(w\right)\\
= & u\to\psi_{1}\circ\psi_{2}\left(w\right)\to\tilde{\mathcal{F}}_{\psi_{1}\circ\psi_{2}}\left(w\right)\to\tilde{\mathcal{F}}_{\psi_{1}}\circ\tilde{\mathcal{F}}_{\psi_{2}}\left(w\right)
\end{align*}
 But $\tilde{\mathcal{F}}_{\psi_{1}\circ\psi_{2}}\left(w\right)\to\tilde{\mathcal{F}}_{\psi_{1}}\circ\tilde{\mathcal{F}}_{\psi_{2}}\left(w\right)$
is injective because it is a locally injective function between trees.
We conclude that the diagram \begin{equation} \label{eq:vikPP}
\begin{gathered}
\xymatrix{ & u\ar[ld]\ar[rd]\\ \psi_{1}\circ\psi_{2}\left(v_{i}^{k}\right)\ar[d] &  & \psi_{1}\circ\psi_{2}\left(w\right)\ar[d]\\ \mathcal{\tilde{F}}_{\psi_{1}\circ\psi_{2}}\left(v_{i}^{k}\right)\ar[rr] &  & \mathcal{\tilde{F}}_{\psi_{1}\circ\psi_{2}}\left(w\right) } 
\end{gathered} 
\end{equation} is commutative. Thus $u\to\psi_{1}\circ\psi_{2}\left(v_{i}^{k}\right)$
is an ideal preimage. We want to show that $u\to\psi_{1}\circ\psi_{2}\left(v_{1}^{1}\right),\dots,u\to\psi_{1}\circ\psi_{2}\left(v_{1}^{l_{1}}\right),u\to\psi_{1}\circ\psi_{2}\left(v_{2}^{1}\right),\dots,u\to\psi_{1}\circ\psi_{2}\left(v_{l}^{l_{l}}\right)$
is a full set. Let $u\to\psi_{1}\circ\psi_{2}\left(w\right)$ then
because $u\to\psi_{1}\left(v_{1}\right),\dots,u\to\psi_{1}\left(v_{l}\right)$
is a full set there exists a $v_{i}\to\psi_{2}\left(w\right)$ satisfying
Diagram \ref{eq:vi}. Because $v_{i}\to\psi_{2}\left(v_{i}^{1}\right),\dots,v_{i}\to\psi_{2}\left(v_{i}^{l_{i}}\right)$
is a full set there exists $v_{i}^{k}\to w$ satisfying Diagram \ref{eq:vik}
thus for every $u\to\psi_{1}\circ\psi_{2}\left(w\right)$ there exists
ideal preimage $u\to\psi_{1}\circ\psi_{2}\left(v_{i}^{k}\right)$
with $v_{i}^{k}\to w$ that satisfies Diagram \ref{eq:vikPP}. Suppose
there is another $u\to\psi_{1}\circ\psi_{2}\left(v_{i'}^{k'}\right)$
with $v_{i'}^{k'}\to w$ satisfying Diagram \ref{eq:vikPP}. But $v_{i'}\to\psi_{2}\left(v_{i'}^{k'}\right)$
is an ideal preimage so $v_{i'}^{k'}\to w$ give us $v_{i'}\to\psi_{2}\left(w\right)$
satisfying Diagram \ref{eq:vik} and $v_{i'}\to\psi_{2}\left(w\right)$
give us $u\to\psi_{1}\circ\psi_{2}\left(w\right)$ satisfying Diagram
\ref{eq:vi}. But $u\to\psi_{1}\left(v_{1}\right),\dots,u\to\psi_{1}\left(v_{l}\right)$
a full set therefor there is a unique ideal preimage satisfying Diagram
\ref{eq:vi} so $v_{i'}=v_{i}$. In the same way $v_{i}\to\psi_{2}\left(v_{i}^{1}\right),\dots,v_{i}\to\psi_{2}\left(v_{i}^{l_{i}}\right)$
is a full set therefor there is a unique ideal preimage satisfying
Diagram \ref{eq:vik} so $v_{i'}^{k'}=v_{i}^{k}$. 
\end{proof}
\begin{lem}
\label{lem:Let--be} Let $\varphi$ be the automorphism of $F_{X}$
sending $x_{1}\mapsto x_{1}x_{2},x_{i}\mapsto x_{i}$ for $i\neq1$
(this is a Nielsen move). We show that every u-word $\overline{u}$
has a finite full set of ideal preimages 
\end{lem}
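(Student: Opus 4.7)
The plan is to construct a finite full set of ideal preimages explicitly, exploiting the very restricted form of $\varphi$: it only affects the letters $x_1^{\pm 1}$, replacing each $x_1$ by $x_1 x_2$ and each $x_1^{-1}$ by $x_2^{-1}x_1^{-1}$, and leaves every other generator fixed. Consequently, when one concatenates the images of the letters of $v$ to form $\mathcal{F}_\varphi(v)$ and then folds to obtain $\tilde{\mathcal{F}}_\varphi(v)$, the only cancellations occur where the word $v$ contains an adjacent pair $x_1\cdot x_2^{-1}$ or $x_2\cdot x_1^{-1}$. Every surviving letter of $\varphi(v)$ can therefore be traced to a specific letter of $v$, with exactly two local sources of ambiguity: an $x_2$ in $\varphi(v)$ can come from an $x_2$ in $v$ or from the second half of an $x_1\mapsto x_1x_2$ expansion, and dually for $x_2^{-1}$.

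Fix an orientation $u=u_1\cdots u_k$ of $\overline{u}$. I would enumerate all \emph{parsings} of $u$: order-preserving assignments of each position $i$ to a letter of a candidate preimage $v'$ such that consecutive positions of $u$ correspond to equal or adjacent positions of $v'$, and such that the per-letter source rules above are satisfied. Each parsing produces a unique candidate base word $v'$ of length at most $k$, and since every $u_i$ has at most two legal sources with consistency enforced across neighbours, only finitely many parsings exist. For each $v'$ I would then append a single boundary letter on each side, restricted to those choices that do not create an $x_1\cdot x_2^{-1}$ or $x_2\cdot x_1^{-1}$ pair at the boundary of $v'$. Ranging over the legal boundary letters produces, for each parsing, a collection of candidates constituting a full paradigm in the sense of the paper; taking the union over all parsings defines the proposed finite set $S$.

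The verification then has two halves. Each element of $S$ is an ideal preimage because the boundary extension was chosen precisely so that embedding $v\to w$ cannot introduce a new cancellation reaching into the interior of the image of $u$ inside $\varphi(v)$; hence the required commutative square descends from $\varphi(v)$ to $\varphi(w)$. Conversely, $S$ is full because any morphism $u\to\varphi(w)$ with $w$ a cyclic word determines, by tracing each letter of $u$ through $\varphi$, a canonical parsing of $u$ together with a specific choice of the two neighbouring letters of $w$; this data realises exactly one element of $S$ and makes the defining diagram of a full set commute.

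The main obstacle I anticipate is the bookkeeping around the boundary: verifying that the enumerated boundary choices genuinely assemble into a full paradigm, so that after applying Remark \ref{rem:eliminating_full_paradigm} the set reduces to the minimal full set given by Theorem \ref{thm:main1}. Degenerate cases need particular care, namely very short $u$ (for instance $u=\overline{x_2}$, where the parsing core is a single letter and all content lives in the boundary extension) and parsings in which a single $x_1$ in $v'$ accounts for two displayed positions $x_1x_2$ in $u$. Once these cases are handled uniformly, finiteness of $S$ is automatic from the bound $\ell(v)\le k+2$ on the base words, completing the proof that $\varphi$ is a good automorphism.
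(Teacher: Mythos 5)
Your overall strategy --- trace each letter of $u$ back through $\varphi$, enumerate the finitely many consistent ways of doing so, and pad each candidate with boundary letters so that no cancellation can reach the image of $u$ --- is the same strategy the paper uses (the paper organizes the interior step by passing to $\varphi^{-1}\left(u\right)$ and then classifying the two affixes into the seven types of Table \ref{tab:-pseudo-preimages}). But as written your enumeration has a genuine gap. You require that consecutive positions of $u$ correspond to \emph{equal or adjacent} positions of the candidate $v'$, and you conclude $l\left(v'\right)\leq k$ and $l\left(v\right)\leq k+2$. This misses exactly the preimages in which a letter of $v'$ cancels entirely in $\mathcal{F}_{\varphi}\left(v'\right)$. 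For example, take $u=\overline{x_{1}x_{1}}$: the only way to see $x_{1}x_{1}$ inside $\varphi\left(v\right)$ is for $v$ to contain $x_{1}\cdot x_{2}^{-1}\cdot x_{1}$, since $\varphi\left(x_{1}x_{2}^{-1}x_{1}\right)=x_{1}x_{1}x_{2}$; here the two consecutive letters of $u$ come from positions of $v'$ that are \emph{two} apart, separated by a cancelled $x_{2}^{-1}$, and the core already has length $3>k=2$. The correct bound is roughly $l\left(v\right)\leq 2k+2$ (each $x_{1}^{\pm1}$ in $u$ may force an extra cancelled letter in $v$), which still yields finiteness, but your parsing rule must be amended to allow these skipped letters or the constructed set $S$ will fail to be full.

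The second gap is the uniqueness half of fullness. You assert that a morphism $u\to\varphi\left(w\right)$ "realises exactly one element of $S$," but nothing in the construction rules out that two distinct candidates $v_{i},v_{j}\in S$ both admit morphisms to the same cyclic word $w$ making the defining diagram commute. The paper has to work for this: it computes the $\varphi^{-1}$-ideal preimages $v_{i}^{k}$ of each $v_{i}$ (Table \ref{tab:inverse-pseudo-preimages}), shows every $u\to\varphi\left(w\right)$ factors through some $v_{i}^{k}$, and then rules out double counting by observing that the pushout $v_{i}^{k_{1}}\coprod_{u}v_{j}^{k_{2}}$ for $i\neq j$ contains a vertex of degree $3$, which cannot map locally injectively into $\varphi\left(w\right)$ whose vertices all have degree $2$. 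Some argument of this kind is needed; "canonical parsing" is not yet one, because the ambiguity between an $x_{2}$ of $w$ surviving as itself and an $x_{2}$ produced by an $x_{1}\mapsto x_{1}x_{2}$ expansion is resolved only by the letters of $w$ \emph{outside} the image of $u$, and you must show that your boundary extensions capture enough of $w$ to resolve it in exactly one way.
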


We notice that $\varphi$ satisfies the following special case of
Thurston's bounded cancellation lemma (see \cite{MR916179}). Let
$v_{1}$ and $v_{2}$ be words such that $v_{1}\cdot v_{2}$. Then
at most one letter can be canceled in the multiplication $\varphi\left(v_{1}\right)\varphi\left(v_{2}\right)$.
Thus to construct an ideal-preimages for a u-word $\overline{u}$
we need to change at most one letter in each affix of $\varphi^{-1}\left(\overline{u}\right)$.
Let $u$ be an orientation of $\overline{u}$ a suffix of $u$ and
a suffix of $u^{-1}$ are said to be pointing out and a prefix of
$u$ and prefix of $u^{-1}$ are said to be pointing in. We classify
8 different kinds of affixes pointing out. We classify the affixes
by examining the affix starting from the most outer letter looking
in until we find the first letter that is not $x_{2}$ or $x_{2}^{-1}$.
Theses affixes are the minimal subword of $w$ that allow us to know
the types of affixes of $\varphi\left(w\right)$. A word $u$ can
have multiple different ideal preimages. In Table \ref{tab:-pseudo-preimages}
they are either separated by commas or expressed by the letter $y$
standing in for different possible letters. We denote the ideal preimages
of $u$ by $\oton vl$. 
\begin{table}[ph]
\begin{centering}
\makebox[\textwidth][c]{%
\begin{tabular}{|c|c|c|c|c|c|}
\hline 
\# &  & suffix of $u$ & suffix of $\varphi^{-1}\left(u\right)$ & suffixes of $v_{i}$ & suffixes of $\varphi\left(v_{i}\right)$\tabularnewline
\hline 
1 & $z\neq x_{1},x_{2},x_{2}^{-1}$ & $z$ & $z$ & $z$ & $z$\tabularnewline
\hline 
2 &  & $x_{1}$ & $x_{1}x_{2}^{-1}$ & $x_{1}$ & $x_{1}x_{2}$\tabularnewline
\hline 
3 & $z\neq x_{1}$,$n\geq1$; $y\neq x_{1},x_{1}^{-1}$ & $zx_{2}^{n}$ & $zx_{2}^{n}$ & $zx_{2}^{n}x_{1},\,zx_{2}^{n}y$ & $zx_{2}^{n}x_{1}x_{2}$, $zx_{2}^{n}y$\tabularnewline
\hline 
4 & $y\neq x_{1},x_{2}^{-1},x_{1}^{-1}$ & $x_{1}x_{2}$ & $x_{1}$ & $x_{1}x_{1},\,x_{1}y$ & $x_{1}x_{2}x_{1}x_{2}$, $x_{1}x_{2}y$\tabularnewline
\hline 
5 & $n\geq2$; $y\neq x_{1},x_{1}^{-1}$ & $x_{1}x_{2}^{n}$ & $x_{1}x_{2}^{n-1}$ & $x_{1}x_{2}^{n-1}x_{1}$, $x_{1}x_{2}^{n-1}y$ & $x_{1}x_{2}^{n}x_{1}x_{2}$, $x_{1}x_{2}^{n}y$\tabularnewline
\hline 
6 & $z\neq x_{1}\,n\geq1$ & $zx_{2}^{-n}$ & $zx_{2}^{-n}$ & $zx_{2}^{-n},zx_{2}^{-n+1}x_{1}^{-1}$ & $zx_{2}^{-n},zx_{2}^{-n}x_{1}^{-1}$\tabularnewline
\hline 
7 & $n\geq1$ & $x_{1}x_{2}^{-n}$ & $x_{1}x_{2}^{-n-1}$ & $x_{1}x_{2}^{-n-1},x_{1}x_{2}^{-n}x_{1}^{-1}$ & $x_{1}x_{2}^{-n},x_{1}x_{2}^{-n}x_{1}^{-1}$\tabularnewline
\hline 
\end{tabular}}
\par\end{centering}
\caption{\label{tab:-pseudo-preimages}$\varphi$-ideal preimages}
\end{table}

\begin{example}
We construct the set of ideal preimages of $\overline{u}=\overline{x_{2}^{-1}x_{1}^{-1}x_{1}^{-1}x_{3}x_{1}}$
using Table \ref{tab:-pseudo-preimages}. We start by classifying
the two affixes of $\overline{u}$. The first affix is $x_{1}$. It
is of type $2$. The second is $x_{1}x_{2}$. It is type $4.$ We
calculate $\varphi^{-1}\left(\overline{u}\right)=\overline{x_{1}^{-1}x_{2}x_{1}^{-1}x_{3}x_{1}x_{2}^{-1}}$
we follow the table and we get the following set of ideal preimages
\begin{eqnarray*}
 & \overline{x_{1}^{-1}x_{1}^{-1}x_{2}x_{1}^{-1}x_{3}x_{1}},\:\overline{x_{2}^{-1}x_{1}^{-1}x_{2}x_{1}^{-1}x_{3}x_{1}},\\
 & \overline{x_{3}x_{1}^{-1}x_{2}x_{1}^{-1}x_{3}x_{1}},\:\overline{x_{3}^{-1}x_{1}^{-1}x_{2}x_{1}^{-1}x_{3}x_{1}},\dots,\:\overline{x_{n}^{-1}x_{1}^{-1}x_{2}x_{1}^{-1}x_{3}x_{1}}
\end{eqnarray*}
as for the morphism $\overline{u}\to\overline{\varphi\left(v_{i}\right)}$
we mark $\overline{u}$ in $\overline{\varphi\left(v_{i}\right)}$
by an underline
\begin{eqnarray*}
 & \overline{x_{2}^{-1}x_{1}^{-1}\underline{x_{2}^{-1}x_{1}^{-1}x_{1}^{-1}x_{3}x_{1}}x_{2}},\:\overline{x_{2}^{-1}\underline{x_{2}^{-1}x_{1}^{-1}x_{1}^{-1}x_{3}x_{1}}x_{2}},\\
 & \overline{x_{3}\underline{x_{2}^{-1}x_{1}^{-1}x_{1}^{-1}x_{3}x_{1}}x_{2}},\:\overline{x_{3}^{-1}\underline{x_{2}^{-1}x_{1}^{-1}x_{1}^{-1}x_{3}x_{1}}x_{2}},\dots,\:\overline{x_{n}^{-1}\underline{x_{2}^{-1}x_{1}^{-1}x_{1}^{-1}x_{3}x_{1}}x_{2}}
\end{eqnarray*}
\end{example}

\begin{rem}
Even though affixes were defined to be a proper subword, in these
definitions we allow for an affix to be the whole word. For instance
if $u=\overline{x_{1}x_{2}^{n}}$ we classify the two affixes of $u$
to be the following: $x_{1}x_{2}^{n}$ this is type $5$ and $x_{1}^{-1}$
pointing out this is type 2. Another limit case is the word is $u=\overline{x_{2}^{n}}$.
In this case we will not find a first letter that is not $x_{2}$
or $x_{2}^{-1}$. We will treat one affix as type $3$ and the other
as type 6. 
\end{rem}

To proof that $u\to\varphi\left(v_{i}\right)$ constructed using Table
\ref{tab:-pseudo-preimages} is an ideal preimage one needs to verify
that for every words $w_{1},w_{2}$ such that $w_{1}\cdot v_{i}\cdot w_{2}$
no letter of the subword $u$ in $\varphi\left(v_{i}\right)$ cancels
in the multiplication $\varphi\left(w_{1}\right)\varphi\left(v_{i}\right)\varphi\left(w_{2}\right)$.
This is straight forward verification because one just has to check
finite combinations of the affix types mentioned in Table \ref{tab:-pseudo-preimages}.
We have verified this but choose not to write it down because of its
length. We would like to show that a set of ideal preimages constructed
using Table \ref{tab:-pseudo-preimages} is a full set. We notice
that $\varphi^{-1}$ is the automorphism sending $x_{1}\mapsto x_{1}x_{2}^{-1},x_{i}\mapsto x_{i}$
for $i\neq1$. In order to compute $\varphi^{-1}$ ideal preimages
we can use Table \ref{tab:-pseudo-preimages} swapping $x_{2}$ with
$x_{2}^{-1}$. Denote by $\oton vl$ the set of $\varphi$-ideal preimages
of $u$ we obtain by using Table \ref{tab:-pseudo-preimages} and
denote by $v_{i}^{1},\dots,v_{i}^{l_{i}}$ the $\varphi^{-1}$-ideal
preimages of $v_{i}$ obtained using Table \ref{tab:-pseudo-preimages}.
In Table \ref{tab:inverse-pseudo-preimages} we calculate the suffixes
of $v_{i}^{k}$. 
\begin{table}[h]
\begin{centering}
\makebox[\textwidth][c]{%
\begin{tabular}{|c|c|c|c|c|}
\hline 
\# &  & suffix of $u$ & suffixes of $v_{i}$ & suffixes of $v_{i}^{k}$\tabularnewline
\hline 
1-2 & $z\neq x_{2},x_{2}^{-1}$ & $z$ & $z$ & $z$\tabularnewline
\hline 
\multirow{3}{0.5cm}{3} & \multirow{3}{2.5cm}{$z\neq x_{1}$, $n\geq1$, $y\neq x_{1},x_{1}^{-1},x_{2}$} & \multirow{3}{1cm}{$zx_{2}^{n}$} & $zx_{2}^{n}x_{1}$ & $zx_{2}^{n}x_{1}$\tabularnewline
\cline{4-5} \cline{5-5} 
 &  &  & $zx_{2}^{n}x_{2}$ & $zx_{2}^{n}x_{2}$,$zx_{2}^{n}x_{1}^{-1}$\tabularnewline
\cline{4-5} \cline{5-5} 
 &  &  & $zx_{2}^{n}y$ & $zx_{2}^{n}y$\tabularnewline
\hline 
\multirow{3}{0.5cm}{4} & \multirow{3}{2.5cm}{$y\neq x_{1},x_{2},x_{2}^{-1},x_{1}^{-1}$} & \multirow{3}{1cm}{$x_{1}x_{2}$} & $x_{1}x_{1}$ & $x_{1}x_{2}x_{1}$\tabularnewline
\cline{4-5} \cline{5-5} 
 &  &  & $x_{1}x_{2}$ & $x_{1}x_{2}x_{2},x_{1}x_{2}x_{1}^{-1}$\tabularnewline
\cline{4-5} \cline{5-5} 
 &  &  & $x_{1}y$ & $x_{1}x_{2}y$\tabularnewline
\hline 
\multirow{3}{0.5cm}{5} & \multirow{3}{2.5cm}{$n\geq2$, $y\neq x_{1},x_{1}^{-1},x_{2}$} & \multirow{3}{1cm}{$x_{1}x_{2}^{n}$} & $x_{1}x_{2}^{n-1}x_{1}$ & $x_{1}x_{2}^{n}x_{1}$\tabularnewline
\cline{4-5} \cline{5-5} 
 &  &  & $x_{1}x_{2}^{n-1}x_{2}$ & $x_{1}x_{2}^{n}x_{2},x_{1}x_{2}^{n}x_{1}^{-1}$\tabularnewline
\cline{4-5} \cline{5-5} 
 &  &  & $x_{1}x_{2}^{n-1}y$ & $x_{1}x_{2}^{n}y$\tabularnewline
\hline 
\multirow{2}{0.5cm}{6} & \multirow{2}{2.5cm}{$z\neq x_{1},n\geq1$, $y\neq x_{1}^{-1}$} & \multirow{2}{1cm}{$zx_{2}^{-n}$} & $zx_{2}^{-n+1}x_{1}^{-1}$ & $zx_{2}^{-n}x_{1}^{-1}$\tabularnewline
\cline{4-5} \cline{5-5} 
 &  &  & $zx_{2}^{-n}$ & $zx_{2}^{-n}y$\tabularnewline
\hline 
\multirow{2}{0.5cm}{7} & \multirow{2}{2.5cm}{$n\geq1$, $y\neq x_{1}^{-1}$} & \multirow{2}{1cm}{$x_{1}x_{2}^{-n}$} & $x_{1}x_{2}^{-n}x_{1}^{-1}$ & $x_{1}x_{2}^{-n}x_{1}^{-1}$\tabularnewline
\cline{4-5} \cline{5-5} 
 &  &  & $x_{1}x_{2}^{-n-1}$ & $x_{1}x_{2}^{-n}y$\tabularnewline
\hline 
\end{tabular}}
\par\end{centering}
\caption{\label{tab:inverse-pseudo-preimages}$\varphi^{-1}$-ideal preimages}
\end{table}

Because $v_{i}\to\varphi^{-1}\left(v_{i}^{k}\right)$ we get $u\to\varphi\circ\varphi^{-1}\left(v_{i}^{k}\right)=v_{i}^{k}$.
Examining Table \ref{tab:inverse-pseudo-preimages} we notice that
in cases 3-7 for every $v_{i}$ the suffixes of $v_{i}^{k}$ for every
$k$ form a full paradigm of the suffixes of $u$ and in cases 1-2
the suffix of $v_{i}^{k}$ is the suffix of $u$. Thus for every $u\to\varphi\left(w\right)$
there exists a $v_{i}^{k}$ s.t. $u\to v_{i}^{k}\to\varphi\left(w\right)$. 

Let $u\to\varphi\left(w\right)$ then there exists $u\to v_{i}^{k}\to\varphi\left(w\right)$.
From $v_{i}^{k}$ being an ideal preimage we get that $v_{i}\to\varphi^{-1}\circ\varphi\left(w\right)=w$.
If we plug these maps into Diagram \ref{eq:bigdiagram} we get that
$v_{i}\to w$ is the ideal preimage of $u\to\varphi\left(w\right)$.
This shows that for every $u\to\varphi\left(w\right)$ there is an
ideal-preimage in the set of ideal preimages constructed using Table
\ref{tab:-pseudo-preimages}.

Now we prove that it is unique. Examining Table \ref{tab:inverse-pseudo-preimages}
we notice that if $v_{i}\neq v_{j}$ for every $k_{1},k_{2}$ the
push-out of the square
\[
\xymatrix{u\ar[d]\ar[r] & v_{j}^{k_{2}}\ar[d]\\
v_{i}^{k_{1}}\ar[r] & v_{i}^{k_{1}}\underset{{\textstyle u}}{\coprod}v_{j}^{k_{2}}
}
\]
is a folded tree with at least one vertex of degree $3$. Let $u\to\varphi\left(w\right)$.
Assume by way of contradiction that both $v_{i}\to w$ and $v_{j}\to w$
are ideal preimages i.e. 
\[
\xymatrix{ & u\ar[rd]\ar[d]\ar[ld]\\
\varphi\left(v_{i}\right)\ar[d] & \varphi\left(w\right)\ar[d] & \varphi\left(v_{j}\right)\ar[d]\\
\tilde{\mathcal{F}}_{\varphi}\left(v_{i}\right)\ar[r] & \mathcal{\tilde{F}}_{\varphi}\left(w\right) & \tilde{\mathcal{F}}_{\varphi}\left(v_{j}\right)\ar[l]
}
\]
commutes. Because $v_{i}^{k}$ were constructed using Table \ref{tab:-pseudo-preimages}
there are $v_{i}^{k_{1}}\to\varphi\left(w\right)$ and $v_{j}^{k_{2}}\to\varphi\left(w\right)$
such that 
\[
\xymatrix{v_{i}\ar[d]\ar[dr] &  & v_{j}\ar[d]\ar[dl]\\
\varphi^{-1}\left(v_{i}^{k_{1}}\right)\ar[d] & w\ar[d] & \varphi^{-1}\left(v_{j}^{k_{2}}\right)\ar[d]\\
\tilde{\mathcal{F}}_{\varphi^{-1}}\left(v_{i}^{k_{1}}\right)\ar[r] & \mathcal{\tilde{F}}_{\varphi^{-1}}\left(\varphi\left(w\right)\right) & \tilde{\mathcal{F}}_{\varphi^{-1}}\left(v_{j}^{k_{2}}\right)\ar[l]
}
\]
commutes. But combining these diagrams and plugging into Diagram \ref{eq:bigdiagram}
we get the diagram
\begin{equation*}
\adjustbox{center}{%
\begin{tikzcd} 
&& u \\ 	{v_{i}^{k_{1}}} & {\varphi\left(v_{i}\right)} & {\varphi\left(w\right)} & {\varphi\left(v_{j}\right)} & {v_{j}^{k_{2}} } \\ 	{ \tilde{\mathcal{F}}_{\varphi}\left(\varphi^{-1}\left(v_{i}^{k_{1}}\right)\right)} & {\tilde{\mathcal{F}}_{\varphi}\left(v_{i}\right)} & {\tilde{\mathcal{F}}_{\varphi}\left(w\right)} & {\tilde{\mathcal{F}}_{\varphi}\left(v_{j}\right)} & { \tilde{\mathcal{F}}_{\varphi}\left(\varphi^{-1}\left(v_{j}^{k_{2}} \right)\right)} \\ 	{\tilde{\mathcal{F}}_{\varphi}\circ\tilde{\mathcal{F}}_{\varphi^{-1}}\left(v_{i}^{k_{1}}\right)} && {\tilde{\mathcal{F}}_{\varphi}\circ\tilde{\mathcal{F}}_{\varphi^{-1}}\left(\varphi\left(w\right)\right)} && {\tilde{\mathcal{F}}_{\varphi}\circ\tilde{\mathcal{F}}_{\varphi^{-1}}\left(v_{j}^{k_{2}} \right)} \\ 	{v_{i}^{k_{1}}} && {\varphi\left(w\right)} && {v_{j}^{k_{2}} } 	\arrow[from=2-1, to=3-1] 	\arrow[from=2-2, to=3-2] 	\arrow[from=3-2, to=3-1] 	\arrow[from=3-1, to=4-1] 	\arrow[from=5-1, to=5-3] 	\arrow[from=5-3, to=4-3] 	\arrow[from=5-1, to=4-1] 	\arrow[from=4-1, to=4-3] 	\arrow[from=1-3, to=2-1] 	\arrow[from=1-3, to=2-2] 	\arrow[from=1-3, to=2-3] 	\arrow[from=1-3, to=2-4] 	\arrow[from=1-3, to=2-5] 	\arrow[from=3-3, to=4-3] 	\arrow[from=2-3, to=3-3] 	\arrow[from=3-2, to=3-3] 	\arrow[shift right=5, curve={height=30pt}, from=2-1, to=5-1] 	\arrow[from=3-4, to=3-3] 	\arrow[from=3-4, to=3-5] 	\arrow[from=2-4, to=3-4] 	\arrow[from=2-5, to=3-5] 	\arrow[from=3-5, to=4-5] 	\arrow[from=4-5, to=4-3] 	\arrow[from=5-5, to=4-5] 	\arrow[from=5-5, to=5-3] 	\arrow[shift left=5, curve={height=-30pt}, from=2-5, to=5-5] 
\end{tikzcd}
 }
\end{equation*}

We conclude that the square 
\[
\xymatrix{u\ar[d]\ar[r] & v_{j}^{k_{2}}\ar[d]\\
v_{i}^{k_{1}}\ar[r] & \varphi\left(w\right)
}
\]
commutes. This implies a locally injective morphism $v_{i}^{k_{1}}\coprod_{u}v_{j}^{k_{2}}\to\varphi\left(w\right)$.
All vertices in $\varphi\left(w\right)$ are of degree $2$ but there
is a vertex of degree $3$ in $v_{i}^{k_{1}}\coprod_{u}v_{j}^{k_{2}}$
which is a contradiction. This concludes the proof of Lemma \ref{lem:Let--be}.

Let $u$ be an u-word an $\varphi$ automorphism. We conclude this
part with a method for calculating the unique minimal full set $S_{u,\varphi}$
(if we take the base words of the ideal preimages we get the multiset
mentioned in the introduction). First we decompose $\varphi$ into
Nielsen moves $\varphi=\varphi_{1}\circ\varphi_{2}\circ\cdots\circ\varphi_{l}$
by combining moves we can assume that every $\varphi_{k}$ is of the
form $x_{i}\mapsto x_{i}x_{j},x_{m}\mapsto x_{m}$ with $m\neq i$
or $x_{i}\mapsto x_{j}x_{i},x_{m}\mapsto x_{m}$ with $m\neq i$.
Now we use Table \ref{tab:-pseudo-preimages} to construct a full
set $S_{u,\varphi_{1}}$ of $\varphi_{1}$ ideal preimages of $u$
. Now for every $v\in S_{u,\varphi_{1}}$ we use Table to construct
a full set $S_{v,\varphi_{2}}$ of $\varphi_{2}$ ideal preimages
of $v$. We have shown that the set $\bigcup_{v\in S_{u,\varphi_{1}}}S_{v,\varphi_{2}}$
is a full set but generally it is not minimal. We can use Remark \ref{rem:eliminating_full_paradigm}
to replace $\bigcup_{v\in S_{u,\varphi_{1}}}S_{v,\varphi_{2}}$ by
the minimal set $S_{u,\varphi_{1}\circ\varphi_{2}}$ and now recursively
continue this process to $\varphi_{3}$ and onward. 

\section{Constructions of Inverse limit and Representation }

\subsection{Construction of inverse limit}

We know wish to create an inverse system of modules by defining $p_{k}:M_{k}\to M_{k-1}$
such that $p_{k}\pi_{k}=\pi_{k-1}$. We make two auxiliary definitions
\begin{defn}[U-word orientation]
 Let $f:\text{reduced-words}\to\text{u-words}$ the function that
forgets the orientation. A u-word orientation is a section $\sigma:\text{u-words}\to\text{reduced-words}$
s.t. $f\circ\sigma=\id$ choosing for every u-word an orientation.
Let $\sigma$ be an u-word orientation, let $\overline{u}$ be an
u-wrod then $\sigma\left(\overline{u}\right)=u$ or $\sigma\left(\overline{u}\right)=u^{-1}$.
Suppose $\sigma\left(\overline{u}\right)=u$ we say $u$ is the orientation
of $\overline{u}$. The word $u$ is also the orientation of $\overline{u^{-1}}$.
This is an auxiliary definition in order to make definitions precise.
We will mostly be interested in objects that are independent of this
choice. We choose an arbitrary u-word orientation we will use until
the end of the article.
\end{defn}

\begin{defn}[Intrinsic orientation]
  First we define by example then we give a precise definition.
Let $\overline{w}$ be a u-word with orientation $w$ and $\overline{u}$
a $k$-affix with orientation $u$. We say that $u$ is pointed inwards
if we write $\overline{w}$ as $\xymatrix{\cdot\ar[r]^{u} & \cdot\ar[r]^{v} & \cdot}
$ we say $u$ is pointed outward if write $\overline{w}$ as $\xymatrix{\cdot & \cdot\ar[r]^{v}\ar[l]_{u} & \cdot}
$ . We say that $\overline{u}$ is pointing inward in $\overline{w}$
if $u$ is a proper prefix of $w$ or $u^{-1}$ is a proper suffix
of $w$. We say $\overline{u}$ is pointing outwards in $\overline{w}$
if $u^{-1}$ is a proper prefix of $w$ or $u$ is a proper suffix
of $w$. We notice that the definition is independent of the orientation
of $\overline{w}$ but not independent of the orientation of $\overline{u}$.
Without orienting u-words we can not say that a $k$-affix $\overline{u}$
is pointing inward or outward but we can compare how a $k$-affix
$\overline{u}$ is situated in two different u-words. Let $\overline{w_{1}},\overline{w_{2}},\overline{u}$
be u-words s.t. $\overline{u}$ is a $k$-affix of $\overline{w_{1}}$
and also a $k$-affix of $\overline{w_{2}}$. We say that $\overline{u}$
has opposite intrinsic orientation in $\overline{w_{1}}$ and $\overline{w_{2}}$
if for every u-word orientation $\overline{u}$ is pointing inward
in one and outward in the other. We say that they have the same intrinsic
orientation in $\overline{w_{1}}$ and $\overline{w_{2}}$ if for
every u-word orientation they both point inward or both point outward.
 
\end{defn}

\begin{defn}
\label{def:p} We define two sequences of homomorphisms $\overleftarrow{p}_{k}:M_{k}\to M_{k-1}$
and $\overrightarrow{p}_{k}:M_{k}\to M_{k-1}$. We first make auxiliary
definition. Let $u$ be a length $k-1$ u-word and $w$ a length $k$
u-word we define the functionals $\overleftarrow{\delta}_{u},\overrightarrow{\delta}_{u}\in M_{k}^{*}\left(F_{n}\right)$
by 
\[
\overleftarrow{\delta}_{u}\left(w\right)=\begin{cases}
1 & \text{If \ensuremath{u} is \ensuremath{k-1} affix of \ensuremath{w} pointing outward}\\
0 & \otherwise
\end{cases}
\]
 
\[
\overrightarrow{\delta}_{u}\left(w\right)=\begin{cases}
1 & \text{If \ensuremath{u} is \ensuremath{k-1} affix of \ensuremath{w} pointing inward}\\
0 & \otherwise
\end{cases}
\]
 We note that it is impossible for a length $k$ word $w$ to have
two $k$-1-affixes $u$ pointing outwards. We identify $\hom\left(M_{k},M_{k-1}\right)$
with $M_{k}^{*}\otimes M_{k-1}$ and define $\overleftarrow{p}_{k},\overrightarrow{p}_{k}\in M_{k}^{*}\otimes M_{k-1}$
to be
\[
\overleftarrow{p}_{k}=\sum_{u\in W_{k-1}}\overleftarrow{\delta}_{u}\otimes u,\overrightarrow{p}_{k}=\sum_{u\in W_{k-1}}\overrightarrow{\delta}_{u}\otimes u
\]
\end{defn}

\begin{example}
We define an orientation on $W_{2}$ by $\sigma\left(\overline{xx}\right)=xx,\sigma\left(\overline{yy}\right)=yy,\sigma\left(\overline{xy}\right)=xy,\sigma\left(\overline{yx}\right)=yx,\sigma\left(\overline{y^{-1}x}\right)=y^{-1}x,\sigma\left(\overline{xy^{-1}}\right)=xy^{-1}$,
we look at $\overleftarrow{p}_{3}:M_{3}\to M_{2}$ in matrix form
\[
\begin{smallmatrix} & \overline{xxx} & \overline{yyy} & \overline{xxy} & \overline{xxy^{-1}} & \overline{yxx} & \overline{y^{-1}xx} & \overline{yyx} & \overline{yyx^{-1}} & \overline{xyy} & \overline{x^{-1}yy} & \overline{yxy} & \overline{y^{-1}xy^{-1}} & \overline{xyx} & \overline{x^{-1}yx^{-1}} & \overline{yxy^{-1}} & \overline{y^{-1}xy} & \overline{xyx^{-1}} & \overline{x^{-1}yx}\\
\overline{xx} & 1 & 0 & 1 & 1 & 0 & 0 & 0 & 0 & 0 & 0 & 0 & 0 & 0 & 0 & 0 & 0 & 0 & 0\\
\overline{yy} & 0 & 1 & 0 & 0 & 0 & 0 & 1 & 1 & 0 & 0 & 0 & 0 & 0 & 0 & 0 & 0 & 0 & 0\\
\overline{xy} & 0 & 0 & 0 & 0 & 0 & 0 & 0 & 0 & 1 & 0 & 0 & 0 & 1 & 0 & 0 & 0 & 1 & 0\\
\overline{yx} & 0 & 0 & 0 & 0 & 1 & 0 & 0 & 0 & 0 & 0 & 1 & 0 & 0 & 0 & 1 & 0 & 0 & 0\\
\overline{y^{-1}x} & 0 & 0 & 0 & 0 & 0 & 1 & 0 & 0 & 0 & 0 & 0 & 1 & 0 & 0 & 0 & 1 & 0 & 0\\
\overline{xy^{-1}} & 0 & 0 & 0 & 0 & 0 & 0 & 0 & 1 & 0 & 0 & 0 & 0 & 0 & 1 & 0 & 0 & 1 & 0
\end{smallmatrix}
\]
  Let $\tilde{M}_{k}=\im\pi_{k}$. 
\end{example}

\begin{thm}
\label{thm:mainsection2}We state properties of this construction.
\begin{enumerate}
\item $\tilde{M}_{k}=\im\pi_{k}=\text{Eq}\left(\overleftarrow{p}_{k},\overrightarrow{p}_{k}\right)\left(=\ker\left(\overleftarrow{p}_{k}-\overrightarrow{p}_{k}\right)\right)$ 
\item $\coker\left(\overleftarrow{p}_{k}-\overrightarrow{p}_{k}\right)\cong\nicefrac{\mathbb{Z}}{2\mathbb{Z}}$
i.e $\rank\tilde{M}_{k}\left(F_{n}\right)=\rank\ker\left(\overleftarrow{p}_{k}-\overrightarrow{p}_{k}\right)=\rank M_{k}\left(F_{n}\right)-\rank M_{k-1}\left(F_{n}\right)=n\left(2n-2\right)\left(2n-1\right)^{k-2}$
\item We denote $p_{k}=\overleftarrow{p}_{k}|_{\tilde{M}_{k}}=\overrightarrow{p}_{k}|_{\tilde{M}_{k}}$
then $p_{k}\pi_{k}=\pi_{k-1}$
\end{enumerate}
\end{thm}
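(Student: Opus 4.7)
The plan is to prove the three items in the order \emph{3, 1, 2}, since item 3 supplies the easy inclusion in item 1 and item 1 feeds the rank count in item 2. For item 3, I would prove $\overleftarrow{p}_{k}\pi_{k}(w)=\pi_{k-1}(w)$ for a cyclic word $w$ by a bijective argument on the coefficient identity $|\hom(\overline{u},w)|=\sum_{\overline{v}:\,\overline{u}\text{ outward in }\overline{v}}|\hom(\overline{v},w)|$. Fix $\overline{u}\in W_{k-1}$ and a morphism $f\colon\overline{u}\to w$; since $w$ is folded and homeomorphic to a circle, the vertex of $w$ that is the image of the outward endpoint of $\overline{u}$ has a unique incident edge not used by $f$, and its label extends $\sigma(\overline{u})$ to a reduced length-$k$ word $\sigma(\overline{v})$ in which $\overline{u}$ is an outward $(k-1)$-affix. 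This sets up a bijection between $\hom(\overline{u},w)$ and the pairs $(\overline{v},g\colon\overline{v}\to w)$ counted on the right. The analogous argument with ``inward'' gives $\overrightarrow{p}_{k}\pi_{k}(w)=\pi_{k-1}(w)$, so the two operators agree on $\tilde{M}_{k}$ and define $p_{k}$ with $p_{k}\pi_{k}=\pi_{k-1}$.

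For item 1, the inclusion $\tilde{M}_{k}\subseteq\ker(\overleftarrow{p}_{k}-\overrightarrow{p}_{k})$ is immediate from item 3 on the generators $\pi_{k}(w)$. For the reverse inclusion I would lift the picture to an oriented companion: let $\hat{M}_{k}$ be the free module on \emph{oriented} length-$k$ reduced words, and let $\hat{p}^{L},\hat{p}^{R}\colon\hat{M}_{k}\to\hat{M}_{k-1}$ take the length-$(k-1)$ prefix and suffix. In this oriented picture $\hat{p}^{L}-\hat{p}^{R}$ is the genuine boundary of the directed de Bruijn graph on $\hat{W}_{k-1}$, whose kernel is classically spanned by closed directed walks, each of which reads off an oriented cyclic word $w$ and contributes $\hat{\pi}_{k}(w)$; forgetting orientation sends $\hat{\pi}_{k}(w)$ to $\pi_{k}(w)$. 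To finish I would run an Eulerian-peeling argument directly on $x=\sum a_{\overline{v}}\overline{v}\in\ker(\overleftarrow{p}_{k}-\overrightarrow{p}_{k})$: the balance condition at each vertex $\overline{u}\in W_{k-1}$ provides, for each occurrence of $\overline{u}$ as the outward affix of some $\overline{v}$ in $x$, a matching occurrence of $\overline{u}$ as the inward affix of some $\overline{v}'$, allowing one to trace out a closed walk in $W_{k}$, peel it off as $\pi_{k}(w)$ for a cyclic word $w$, and recurse on $x-\pi_{k}(w)$.

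For item 2, let $\epsilon\colon M_{k-1}\to\mathbb{Z}$ be the augmentation $\sum a_{u}\overline{u}\mapsto\sum a_{u}$. A direct case check on each $\overline{v}\in W_{k}$ shows $\epsilon((\overleftarrow{p}_{k}-\overrightarrow{p}_{k})(\overline{v}))\in 2\mathbb{Z}$: the difference is $\overline{u}_{\text{out}}-\overline{u}_{\text{in}}$ when the two affixes of $\overline{v}$ carry opposite inward/outward labels and $\pm(\overline{u}_{1}+\overline{u}_{2})$ when they coincide. Thus $\epsilon$ descends to a surjection $\coker(\overleftarrow{p}_{k}-\overrightarrow{p}_{k})\twoheadrightarrow\mathbb{Z}/2\mathbb{Z}$. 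For injectivity I would produce explicit preimages: lift $\overline{u}_{1}-\overline{u}_{2}$ by telescoping along a path between the two vertices in the de Bruijn graph (available by connectivity for $n\geq 2$), and lift $2\overline{u}$ using a length-$k$ u-word whose two affixes both carry the outward label. The isomorphism $\coker\cong\mathbb{Z}/2\mathbb{Z}$ forces $\rank\im(\overleftarrow{p}_{k}-\overrightarrow{p}_{k})=\rank M_{k-1}$, and rank-nullity then yields $\rank\ker=\rank M_{k}-\rank M_{k-1}=n(2n-2)(2n-1)^{k-2}$.

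The principal technical hurdle throughout is orientation bookkeeping: the inward/outward labels on an edge depend on the arbitrary section $\sigma$, and the ``both outward / both inward'' anomaly on a single edge is what prevents $\overleftarrow{p}_{k}-\overrightarrow{p}_{k}$ from being a standard antisymmetric graph boundary. This anomaly is precisely what produces the $\mathbb{Z}/2$ cokernel in item 2, and the delicate point when running the Eulerian peeling in item 1: one must verify that each peeled closed walk assembles into an honest cyclic word $w$ (rather than an inconsistent ``two-sided'' fragment) so that the peeled piece is indeed $\pi_{k}(w)$.
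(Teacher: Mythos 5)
Your item 3 is correct and is a genuinely more direct route than the paper's: the paper first proves item 1, defines $p_{k}$ as the common restriction of $\overleftarrow{p}_{k}$ and $\overrightarrow{p}_{k}$ to $\tilde{M}_{k}$, and then extracts $p_{k}\pi_{k}=\pi_{k-1}$ from the identity $\hat{\pi}_{k-1}\circ\pi_{k}=2\pi_{k-1}$ by cancelling a factor of $2$, whereas your unique-extension bijection proves $\overleftarrow{p}_{k}\pi_{k}\left(w\right)=\overrightarrow{p}_{k}\pi_{k}\left(w\right)=\pi_{k-1}\left(w\right)$ in one stroke and yields the inclusion $\im\pi_{k}\subseteq\ker\left(\overleftarrow{p}_{k}-\overrightarrow{p}_{k}\right)$ for free. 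The bijection is sound: since $w$ is folded and circular each morphism $\overline{u}\to w$ extends uniquely to a length-$k$ word having $\overline{u}$ as an outward affix, and the paper's observation that a length-$k$ word cannot carry the same $\left(k-1\right)$-affix pointing outward at both ends guarantees the right-hand count is exactly the number of such extensions.

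The other two items have genuine gaps. For item 1 your Eulerian peeling is the paper's gluing argument in disguise, but the step ``the balance condition at each vertex provides, for each outward occurrence of $\overline{u}$, a matching inward occurrence'' is only valid when all coefficients $a_{\overline{v}}$ are non-negative; for a general integer vector in the kernel the positive and negative parts need not be separately balanced, so no walk can be traced. The paper devotes a separate argument (parts (3) and (4) of Proposition \ref{prop:pk-pk}) to showing that every element of $\im\left(\overleftarrow{p}_{k}-\overrightarrow{p}_{k}\right)$ has a non-negative preimage and hence that the kernel is generated by non-negative vectors; this reduction is absent from your plan and is not a formality, since it itself rests on the image computation. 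For item 2, your proposed lift of $2\overline{u}$ ``using a length-$k$ u-word whose two affixes both carry the outward label'' cannot exist: as the paper notes when defining $\overleftarrow{\delta}_{u}$, a length-$k$ word cannot have the same $\left(k-1\right)$-affix $\overline{u}$ outward at both ends (writing out the two conditions forces either a letter equal to its own inverse or a cancellation inside the word). Since telescoping along a path only produces elements of the form $\pm\overline{u}_{1}\pm\overline{u}_{2}$ with signs you do not control a priori, without a verified supply of the sum elements $\overline{u}_{1}+\overline{u}_{2}$ you would only bound the image inside $\ker\epsilon$, giving cokernel $\mathbb{Z}$ rather than $\mathbb{Z}/2\mathbb{Z}$. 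The paper realizes all four sign combinations $\pm\overline{u}_{1}\pm\overline{u}_{2}$ for distinct $\overline{u}_{1},\overline{u}_{2}$ by pushing forward the longer words $u_{1}^{\epsilon_{3}}xu_{2}^{\epsilon_{4}}$ under $\hat{\pi}_{k}$, and only then obtains $2\overline{u}_{0}=\left(\overline{u}_{0}-\overline{u}\right)+\left(\overline{u}_{0}+\overline{u}\right)$; some such device is needed to close your argument.
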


Thus we get an homomorphism $\pi:\mathbb{Z}\left[C\right]\to\varprojlim\tilde{M}_{k}$

We begin by defining gluing. 
\begin{defn}[Gluing along a $k$-affix]
 \label{def:Gluing} Let $\overline{u},\overline{w}_{1},\overline{w}_{2}$
be u-words s.t. $\overline{u}$ is a $k$-affix situated in opposite
intrinsic orientation in $\overline{w}_{1}$ vis-\`{a}-vis $\overline{w}_{2}$
and let $v$ be a u-word s.t. both its $k$-affixes are $u$ situate
in opposite intrinsic orientation. The gluing of $w_{1}$ and $w_{2}$
along $u$ is the pushout of the square 
\[
\begin{array}{ccc}
u & \to & w_{1}\\
\downarrow\\
w_{2}
\end{array}
\]
 in the category of labeled graphs, the gluing of $v$ to itself along
$u$ is the co-equalizer of $u\rightrightarrows v$. Notice that because
$\overline{u}$ is of opposite intrinsic orientation in $\overline{w}_{1}$
and $\overline{w}_{2}$ the result of the pushout is an u-word if
$\overline{u}$ were to be situated in the same intrinsic orientation
we would have gotten a tripod. In the second case the result is a
cyclic word if $u$ had the same intrinsic orientation we would have
gotten a circle with a tail.
\end{defn}

\begin{example}
\label{exa:gluing} We continue Example \ref{exa:pi_k}. Recall
$\pi_{3}\left(xyx^{-1}y^{-1}\right)=\overline{xyx^{-1}}+\overline{yx^{-1}y^{-1}}+\overline{x^{-1}y^{-1}x}+\overline{y^{-1}xy}$.
We will show that we can reconstruct the cyclic word $xyx^{-1}y^{-1}$
by gluing the components $\pi_{3}\left(xyx^{-1}y^{-1}\right)$ along
$2$-affixes. We notice that $\overline{yx^{-1}}$ is situated in
opposite intrinsic orientation in $\overline{xyx^{-1}}$ vis-\`{a}-vis
$\overline{yx^{-1}y}$ thus we can glue $\overline{xyx^{-1}}$ and
$\overline{yx^{-1}y^{-1}}$ along $\overline{yx^{-1}}$ the result
is $\overline{xyx^{-1}y^{-1}}$ (Figure \ref{fig:examplegluing1}).
We continue and glue $\overline{xyx^{-1}y^{-1}}$ and $\overline{x^{-1}y^{-1}x}$
along $\overline{x^{-1}y^{-1}}$ and $\overline{xyx^{-1}y^{-1}x}$
with $\overline{y^{-1}xy}$ along $\overline{y^{-1}x}$ to get $\overline{xyx^{-1}y^{-1}xy}.$
Finally we glue $\overline{xyx^{-1}y^{-1}xy}$ to itself along $\overline{xy}$
to get the cyclic word $xyx^{-1}y^{-1}$(Figure \ref{fig:examplegluing2}).
\end{example}

\captionsetup[figure]
{%
name = Figure,
labelsep  = space  
}

\begin{figure}[ph]
 \centering     
\begin{minipage}{0.45\textwidth} 
	\adjustbox{ width=0.9\textwidth, center}{%
		\begin{tikzcd} 	
			\bullet & \bullet & \bullet & \bullet \\ 	& \bullet & \bullet & \bullet & \bullet \\ 	\bullet & \bullet & \bullet & 			\bullet & \bullet 	\arrow["x", from=1-1, to=1-2] 	\arrow["y", from=1-2, to=1-3] 	\arrow["x"', from=1-4, to=1-3] 			\arrow["y", from=2-2, to=2-3] 	\arrow["x"', from=2-4, to=2-3]    \arrow["y"', from=2-5, to=2-4]    
			\arrow["x", from=3-1, to=3-2] 	\arrow["y", from=3-2, to=3-3] 	\arrow["x"', from=3-4, to=3-3] 	
			\arrow["y"', from=3-5, to=3-4] 
		\end{tikzcd}
	}       
	\caption{\label{fig:examplegluing1}}     
\end{minipage}
\hfill     
\begin{minipage}{0.45\textwidth}
	\adjustbox{width=0.9\textwidth, center}{%
		\begin{tikzcd} 	
			\bullet &&& \bullet \\
			& \bullet & \bullet &&& \bullet & \bullet \\
			\bullet && \bullet & \bullet && \bullet & \bullet 	
			\arrow["x"', from=3-1, to=3-3] 	\arrow["x", from=2-2, to=2-3] 	
			\arrow["y", from=2-3, to=3-3] 	\arrow["y"', from=1-1, to=3-1] 	
			\arrow["x", from=1-1, to=1-4] 	\arrow["y", from=1-4, to=3-4] 	
			\arrow["x", from=2-6, to=2-7] 	\arrow["y", from=2-7, to=3-7] 	
			\arrow["x", from=3-6, to=3-7] 	\arrow["y", from=2-6, to=3-6] 
		\end{tikzcd}
		}       
	\caption{\label{fig:examplegluing2}}    
\end{minipage}
\end{figure}

 We would like to show that any vector in $\ker\left(\overleftarrow{p}_{k}-\overrightarrow{p}_{k}\right)$
can be lifted to a vector in $\mathbb{Z}\left[C\right]$. We do this
by generalizing the process showed in Example \ref{exa:gluing}. We
define a non-negative vector to be a linear combination of basis elements
such that all coefficients are non-negative. We think of a non-negative
vector in $M_{k}$ as a graph in the following way: A non negative
vector is written as a sum of $k$ length u-words we think of this
addition as disjoint union of the graphs thus a non-negative vector
is a disjoint union of $u$-words. We have a problem that gluing two
component along a $k-1$-affix results in a u-word that is longer
than $k$ thus it can not be represented in $M_{k}$. Therefore we
define $\hat{M}_{k}=\bigoplus_{j\geq k}M_{j}$ and the dual $\hat{M}_{k}^{*}=\prod_{j\geq k}M_{j}^{*}$.
We extend the homomorphism $\overleftarrow{p}_{k}-\overrightarrow{p}_{k}:M_{k}\to M_{k-1}$
to $\hat{M}_{k}$  by extending the auxiliary functionals in Definition
\ref{def:p}. Let $u$ be a u-word of length $k-1$ and $w$ an u-word
of length greater or equal to $k$. We notice that if the length of
$w$ is greater or equal to $2k-1$ then it is possible that both
$k-1$ affixes of $w$ are $u$ pointing outward. We extend the functional
$\overleftarrow{\delta}_{u}\in M_{k}^{*}$ to $\widehat{\overleftarrow{\delta}_{u}}\in\hat{M}_{k}^{*}$
in the following way 
\[
\widehat{\overleftarrow{\delta}_{u}}\left(w\right)=\begin{cases}
2 & \text{If both \ensuremath{k-1} affixes of \ensuremath{w} are \ensuremath{u} pointing outward}\\
1 & \text{If one \ensuremath{k-1} affix of \ensuremath{w} is \ensuremath{u} pointing outward}\\
0 & \otherwise
\end{cases}
\]
 we extend $\overrightarrow{\delta}_{u}$ analogously. Now we extend
$\widehat{\overleftarrow{p}_{k}}=\sum_{u\in W_{k-1}}\widehat{\overleftarrow{\delta}_{u}}\otimes u,\widehat{\overrightarrow{p}_{k}}=\sum_{u\in W_{k-1}}\widehat{\overrightarrow{\delta}_{u}}\otimes u$
finally $\widehat{\overleftarrow{p}_{k}-\overrightarrow{p}_{k}}:\hat{M}_{k}\to M_{k-1}$
defined to be $\widehat{\overleftarrow{p}_{k}-\overrightarrow{p}_{k}}=\widehat{\overleftarrow{p}_{k}}-\widehat{\overrightarrow{p}_{k}}$.
Clearly $\widehat{\overleftarrow{p}_{k}-\overrightarrow{p}_{k}}|_{M_{k}}=\overleftarrow{p}_{k}-\overrightarrow{p}_{k}$.
We define a morphism $\hat{\pi}_{k}:\hat{M}_{k}\to M_{k}$ analogous
to $\pi_{k}:\mathbb{Z}\left[C\right]\to M_{k}$. Let $w\in\hat{M}_{k}$
be an u-word of length greater or equal to $k$ (these words form
a basis for $\hat{M}_{k}$) and let $u$ be an u-word of length $k-1$.
We define $\hat{\pi}_{k}:\hat{M}_{k}\to M_{k}$ by $u^{*}\left(\hat{\pi}_{k}\left(w\right)\right)=\left|\hom\left(u,w\right)\right|$
we notice that $\hat{\pi}_{k}|_{M_{k}}=\id_{M_{k}}$ . We show lemmas
about gluing.  
\begin{lem}[Gluing Lemma]
 \label{Gluing-lemma-1} Let $\overline{w},\overline{v}$ be u-words
with $l\left(\overline{w}\right),l\left(\overline{v}\right)\geq k$,
let $\overline{t}$ be an $k-1$-affix situated in opposite intrinsic
orientation in $\overline{w}$ vis-\`{a}-vis $\overline{v}$, let
$\overline{u}$ be an u-word with $l\left(\overline{u}\right)\leq k$
and let $\overline{s}$ be the result of gluing $\overline{w}$ and
$\overline{v}$ along $\overline{t}$. We show that 
\[
\hom\left(\overline{u},\overline{s}\right)\cong\left(\hom\left(\overline{u},\overline{w}\right)\bigsqcup\hom\left(\overline{u},\overline{v}\right)\right)/\hom\left(\overline{u},\overline{t}\right)
\]
 thus 
\[
\left|\hom\left(\overline{u},\overline{s}\right)\right|=\left|\hom\left(\overline{u},\overline{w}\right)\right|+\left|\hom\left(\overline{u},\overline{v}\right)\right|-\left|\hom\left(\overline{u},\overline{t}\right)\right|
\]
.
\end{lem}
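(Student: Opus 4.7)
The plan is to view $\overline{s}$ as the pushout of $\overline{w}\leftarrow\overline{t}\to\overline{v}$ in the category of folded labeled graphs and to use the hypothesis $l(\overline{u})\leq k$ to force every morphism $\overline{u}\to\overline{s}$ to factor through one of the two canonical inclusions $\iota_{w}\colon\overline{w}\hookrightarrow\overline{s}$ or $\iota_{v}\colon\overline{v}\hookrightarrow\overline{s}$. The claimed bijection will then fall out by identifying the overlap.

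First I would record the structure of $\overline{s}$. Because $\overline{t}$ is glued with opposite intrinsic orientations, the pushout is again a folded u-word (rather than a tripod), the maps $\iota_{w}$ and $\iota_{v}$ are injective graph morphisms, and their images meet exactly in $\iota_{w}(\overline{t})=\iota_{v}(\overline{t})$. Thus $\overline{s}$ decomposes as a $\overline{w}$-only segment of length $l(\overline{w})-(k-1)$, the overlap of length $k-1$, and a $\overline{v}$-only segment of length $l(\overline{v})-(k-1)$.

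Next, for any $f\colon\overline{u}\to\overline{s}$, both source and target are folded, so $f$ is a local injection from a line segment into a tree, and hence a global embedding. Its image is therefore a subpath of $\overline{s}$ of length exactly $l(\overline{u})\leq k$. The crux is a length count: if this subpath met both the $\overline{w}$-only part and the $\overline{v}$-only part, it would have to contain the entire overlap (length $k-1$) together with at least one edge on each side, giving length at least $k+1$ and contradicting $l(\overline{u})\leq k$. So the image lies entirely in $\iota_{w}(\overline{w})$ or entirely in $\iota_{v}(\overline{v})$; by injectivity of the inclusions, $f$ lifts uniquely through $\overline{w}$ or through $\overline{v}$, and it lifts through both precisely when its image sits inside $\iota_{w}(\overline{t})=\iota_{v}(\overline{t})$, i.e., when it factors through $\overline{t}$.

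Assembling this, post-composition with $\iota_{w}$ and $\iota_{v}$ yields a surjection
\[
\hom(\overline{u},\overline{w})\sqcup\hom(\overline{u},\overline{v})\twoheadrightarrow\hom(\overline{u},\overline{s})
\]
whose fibres are precisely the pairs identified by a common factorisation through $\overline{t}\hookrightarrow\overline{w}$ and $\overline{t}\hookrightarrow\overline{v}$; quotienting by this identification gives the stated bijection, and the cardinality statement follows by inclusion--exclusion. The delicate point I expect to watch is the boundary case $l(\overline{u})=k$, where $\hom(\overline{u},\overline{t})$ is empty and the formula collapses to a disjoint union; here the opposite-orientation hypothesis on $\overline{t}$ is exactly what prevents $\overline{s}$ from acquiring a branch vertex, which would otherwise invalidate both the u-word structure of $\overline{s}$ and the length count above.
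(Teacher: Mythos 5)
Your proposal is correct and follows essentially the same route as the paper: the injectivity of the comparison map comes from the pushout/embedding structure, and surjectivity is forced by the same length count showing that any image of $\overline{u}$ meeting both the $\overline{w}$-only and $\overline{v}$-only parts would have length at least $\left(k-1\right)+1+1=k+1>l\left(\overline{u}\right)$. The only difference is presentational: you spell out the local-injectivity and fibre identification that the paper dispatches as ``general abstract nonsense.''
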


\begin{proof}
Because of general abstract nonsense considerations we get that there
is an injective morphism 
\[
\left(\hom\left(\overline{u},\overline{w}\right)\bigsqcup\hom\left(\overline{u},\overline{v}\right)\right)/\hom\left(\overline{u},\overline{t}\right)\hookrightarrow\hom\left(u,s\right)
\]
. We show it is surjective. Let $w$ be an orientation of $\overline{w}$,
$t$ and orientation of $\overline{t}$ and $s$ and orientation of
$\overline{s}$ such that we can write $w=w_{0}\cdot t$ , $v=t\cdot v_{0}$
and $s=w_{0}\cdot t\cdot v_{0}$. Assume by way of contradiction that
$u$ is a subword of $s$ that is not a subword of either $w$ or
$v$. This means it contains a non trivial segments both of $v_{0}$
and of $u_{0}$. We denote them by $v_{1}$ and $u_{1}$ and notice
$l\left(u_{1}\right),l\left(v_{1}\right)\geq1$. We can write $u=v_{1}\cdot t\cdot u_{1}$,
thus $l\left(u\right)=l\left(v_{1}\cdot t\cdot u_{1}\right)=l\left(v_{1}\right)+l\left(t\right)+l\left(u_{1}\right)=k-1+l\left(v_{1}\right)+l\left(u_{1}\right)\geq k+1$
and this is a contradiction since $l\left(u\right)\leq k$.
\end{proof}
\begin{cor}
Special case: If $l\left(\overline{u}\right)=k$ we get $\hom\left(\overline{u},\overline{t}\right)=\emptyset$.
Thus 
\[
\left|\hom\left(\overline{u},\overline{s}\right)\right|=\left|\hom\left(\overline{u},\overline{w}\right)\right|+\left|\hom\left(\overline{u},\overline{v}\right)\right|
\]
 
\end{cor}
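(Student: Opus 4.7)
The corollary is a direct specialization of the Gluing Lemma once one verifies that $\hom\left(\overline{u},\overline{t}\right)=\emptyset$, so my plan is to establish this emptiness and then read off the formula by substitution.

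To see that there is no morphism $\overline{u}\to\overline{t}$, I would first note that any labeled graph morphism $f\colon\overline{u}\to\overline{t}$ between two \emph{folded} labeled graphs is automatically an immersion: if two distinct edges of $\overline{u}$ starting at a common vertex had the same image under $f$, they would share a label, which the folded condition on $\overline{u}$ forbids. Next, I would use the fact that $\overline{t}$ is a u-word, hence homeomorphic to a compact line segment, which as a graph is a tree of diameter $l\left(\overline{t}\right)=k-1$. An immersion from the line-segment graph $\overline{u}$ into a tree corresponds to a non-backtracking walk of length $l\left(\overline{u}\right)$, and in a tree every non-backtracking walk is a geodesic; thus its length is at most the diameter. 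Since $l\left(\overline{u}\right)=k>k-1$, no such immersion can exist.

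With $|\hom(\overline{u},\overline{t})|=0$ in hand, I would substitute directly into the Gluing Lemma identity
\[
\left|\hom\left(\overline{u},\overline{s}\right)\right|=\left|\hom\left(\overline{u},\overline{w}\right)\right|+\left|\hom\left(\overline{u},\overline{v}\right)\right|-\left|\hom\left(\overline{u},\overline{t}\right)\right|,
\]
and the last term drops out, giving the claimed formula. I do not expect any real obstacle here; the only conceptual point worth pausing on is the passage from "morphism between folded u-words" to "immersion," and that follows instantly from the definition of folded. Everything else is an appeal to the preceding lemma.
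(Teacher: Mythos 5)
Your proposal is correct and matches the paper's intent: the paper states the corollary as an immediate specialization of the Gluing Lemma, the only content being that a u-word of length $k$ admits no morphism into the $\left(k-1\right)$-affix $\overline{t}$, which is exactly what you verify (your folded-source-implies-immersion and reduced-path-in-a-tree argument is a careful justification of the obvious length obstruction). Nothing further is needed.
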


\begin{prop}
\label{selfgluing}Let $\overline{w}$ be an u-word and let $\overline{t}$
be an u-word situated in $\overline{w}$ twice as two $k-1$-affixes
in opposite intrinsic orientation, let $v$ be the cyclic word obtained
by gluing $w$ to itself along $t$ and let $u$ be an u-word of length
$k$. Then $\hom\left(u,w\right)\cong\hom\left(u,v\right)$. 
\end{prop}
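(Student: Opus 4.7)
The plan is to show that the natural labeled-graph quotient $q : \overline{w} \to v$ coming from the gluing (the coequalizer of Definition~\ref{def:Gluing}) induces a bijection $q_{*} : \hom(\overline{u}, \overline{w}) \to \hom(\overline{u}, v)$ by post-composition. The observation driving the argument is that $\overline{u}$ has length $k$ while the identified subword $\overline{t}$ has length $k-1$: no length-$k$ walk in $\overline{w}$ can be contained in a single copy of $\overline{t}$, and therefore the identifications performed by $q$ neither merge distinct length-$k$ walks in $\overline{w}$ nor produce walks in $v$ lacking a lift to $\overline{w}$.

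To make this precise I would parametrize directed length-$k$ walks on both sides. Setting $L := l(v) = l(\overline{w}) - (k-1)$, label the vertices of $\overline{w}$ in order as $x_{0}, x_{1}, \dots, x_{L+k-1}$ and the vertices of $v$ as $V_{0}, \dots, V_{L-1}$; then $q$ acts on vertices by $x_{j} \mapsto V_{j \bmod L}$. In the path $\overline{w}$ there are exactly $L$ valid forward length-$k$ walks (starting at $x_{0}, \dots, x_{L-1}$) and $L$ valid backward walks (starting at $x_{k}, \dots, x_{L+k-1}$); the cycle $v$ likewise admits $L$ directed length-$k$ walks in each orientation, one starting at each vertex. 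A direct check using $j \mapsto j \bmod L$ shows that $q$ restricts to a bijection on starting vertices in each orientation, hence to a bijection between directed length-$k$ walks in $\overline{w}$ and in $v$.

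Since $q$ preserves labels, corresponding walks carry identical label sequences. A graph morphism from $\overline{u}$ into either target is precisely a directed length-$k$ walk whose labels match one of the two orientations of $\overline{u}$, so the walk-bijection descends to the desired bijection on morphism sets, giving $\hom(\overline{u}, \overline{w}) \cong \hom(\overline{u}, v)$. The place that requires the most care is the degenerate case $l(\overline{w}) = 2(k-1)$, where $L = k-1$ and length-$k$ walks in $v$ wrap once around the cycle; the parametrization still produces a bijection but the wraparound step must be verified explicitly, and this is where the argument could most plausibly slip. Everything else is routine bookkeeping parallel to the proof of Lemma~\ref{Gluing-lemma-1}.
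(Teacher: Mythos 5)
Your proof is correct, and it reaches the conclusion by a route that is recognizably different in execution from the paper's, even though both ultimately rest on the same count $L=l\left(v\right)=l\left(\overline{w}\right)-\left(k-1\right)$ of length-$k$ positions. The paper first argues that the quotient $\overline{w}\to v$ induces an injection $\hom\left(u,w\right)\hookrightarrow\hom\left(u,v\right)$ and then forces surjectivity by showing both sides have the same cardinality; to compute $l\left(v\right)$ it splits into two cases according to whether the two copies of $\overline{t}$ overlap in $\overline{w}$, and in the overlapping case it solves the word equation $w=x\cdot t=t\cdot y$ over the free semigroup to identify $v=x_{0}y_{0}$. Your modular parametrization $x_{j}\mapsto V_{j\bmod L}$ replaces both the abstract injectivity step and the case analysis by a single explicit bijection on directed walks: the forward starting vertices $x_{0},\dots,x_{L-1}$ and the backward starting vertices $x_{k},\dots,x_{L+k-1}$ are each complete residue systems mod $L$, and this works uniformly whether or not the affixes overlap (the hypothesis that the two copies of $\overline{t}$ sit in \emph{opposite} intrinsic orientation is exactly what makes the coequalizer act by $j\mapsto j\bmod L$ rather than $j\mapsto l\left(w\right)-j$, so it is worth saying this explicitly). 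Two small points to tighten: the degenerate situation is not only $l\left(\overline{w}\right)=2\left(k-1\right)$ but the whole range $L<k-1$, where walks wrap more than once -- your parametrization still covers it, but you should say so; and a morphism $\overline{u}\to\overline{w}$ corresponds to a directed walk spelling one \emph{fixed} orientation of $u$ (counting walks matching either orientation doubles both sides), which does not affect the bijection but should be phrased carefully.
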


\begin{proof}
The morphism from $\overline{w}\to v$ is locally injective therefor
the morphism $\hom\left(u,w\right)\hookrightarrow\hom\left(u,v\right)$
is injective.We divide into two cases: 
\begin{enumerate}
\item $\frac{l\left(\overline{w}\right)}{2}>l\left(\overline{t}\right)$.
Let $w,t$ orientation of $\overline{w},\overline{t}$ and $v'$ a
cyclically reduced representative of $v$ such that we can write $w=t\cdot w_{0}\cdot t$
and $v'=t\cdot w_{0}$ . There are $l\left(w\right)-k+1$ subwords
of length $k$ in $w$ that is $l\left(t\cdot w_{0}\cdot t\right)-k+1=2l\left(t\right)+l\left(w_{0}\right)-k+1=l\left(t\right)+l\left(w_{0}\right)$
. There are $l\left(v\right)$ subwords of length $k$ in $v$ that
is $l\left(t\right)+l\left(w_{0}\right)$. Because $v$ and $\overline{w}$
have the same amount of subwords this means every subword of $v$
comes from a subword of $\overline{w}$ thus $\hom\left(u,w\right)\cong\hom\left(u,v\right)$. 
\item $\frac{l\left(\overline{w}\right)}{2}\leq l\left(\overline{t}\right).$
Let $w,t$ orientation of $\overline{w},\overline{t}$ such that we
can write $w$ in two ways $w=x\cdot t=t\cdot y$. This can be seen
as an equation over a free semigroup. The solutions of an equation
of this sort has the form $x=x_{0}y_{0},y=y_{0}x_{0}$, $t=\left(x_{0}y_{0}\right)^{r-1}x_{0}$,
$w=\left(x_{0}y_{0}\right)^{r-1}x_{0}y_{0}x_{0}$  with $r\in\mathbb{N}$
, the words $x_{0}$ and $y_{0}$ reduced words s.t. $x_{0}\cdot y_{0}$
and $y_{0}\cdot x_{0}$where $x_{0}\neq1$ but $y_{0}$ possibly trivial.
The cyclic word resulting from gluing $w$ to itself along $t$ is
$v=x_{0}y_{0}$. There are $l\left(w\right)-k+1$ subwords of length
$k$ in $w$ that is $l\left(t\cdot x\right)-k+1=l\left(t\right)+l\left(x\right)-k+1=l\left(x\right)$.
There are $l\left(v\right)$ subwords of length $k$ in $v$ that
is $l\left(v\right)=l\left(x_{0}y_{0}\right)=l\left(x\right)$. As
before every subword of $v$ comes from $w$ thus $\hom\left(u,w\right)\cong\hom\left(u,v\right)$, 
\end{enumerate}
\end{proof}
\begin{prop}
\label{swordtocyclic}Let $v$ be a cyclic word and let the u-word
$u$ be a subword of $v$ and let $k=l\left(u\right)+1$. Then there
exists an u-word $w$ with $u$ situated in it twice as two $k-1$-affixes
in opposite intrinsic orientation such that $v$ is the result of
gluing $w$ to itself along $u$. 
\end{prop}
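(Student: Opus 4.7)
The plan is to build $w$ by ``unrolling'' the cyclic word $v$: cut $v$ open at the start of the given occurrence of $u$, traverse once around the cycle, and extend by another copy of $u$. I would make this precise by passing to the universal cover of $v$ as a labeled graph. Write $m=l(u)$ and $n=l(v)$. The subword morphism $u\to v$ lifts to an embedding of $u$ as a segment $[0,m]$ inside the universal cover $\mathbb{R}\to v$, under which $v$ is identified with $\mathbb{R}/n\mathbb{Z}$ carrying the induced periodic labeling. Define $w$ to be the sub-u-word of this cover corresponding to the interval $[0,m+n]$, with the inherited labeling.

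I would then verify the affix claims directly. The prefix $[0,m]$ of $w$ is literally the lift of $u$; by $n$-periodicity of the covering, the suffix $[n,m+n]$ also carries the labeling of $u$. So $u$ sits in $w$ as a $(k-1)$-affix at both ends, and the proper-subword condition $l(w)=m+n>m$ is automatic. For intrinsic orientation: orienting $w$ from $0$ to $m+n$ makes the prefix copy of $u$ a proper prefix (inward) and the suffix copy a proper suffix (outward); any choice of u-word orientation of $\overline{u}$ preserves this dichotomy since both copies inherit the same direction from $w$. Hence the two occurrences have opposite intrinsic orientation, as required.

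It remains to show that the coequalizer of the two inclusions $u\rightrightarrows w$ (prefix and suffix) is the cyclic word $v$. The universal covering map, restricted to $[0,m+n]$, is a graph morphism $w\to v$ that identifies the two copies of $u$, so it factors through the coequalizer, producing a map $w/{\approx}\to v$ which is surjective because $m+n\geq n$. For injectivity one must check that the equivalence $\approx$ on $w$ generated by $i\sim i+n$ for $i\in[0,m]$ coincides with the restriction of the covering relation $x\equiv y\pmod n$.

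The main obstacle is the case $m\geq n$, where the two copies of $u$ in $w$ overlap and $\approx$ is obtained by iterating the translation several times. I would handle it by a chain argument: if $y-x=jn$ with $0\leq x<y\leq m+n$, then each intermediate point $x+in$ lies in $[0,m+n]$ and each consecutive pair $(x+(i-1)n,x+in)$ with $1\leq i\leq j$ satisfies $x+(i-1)n\leq m$ (since $x+jn\leq m+n$), so the single-step relation links $x$ to $y$ in $j$ moves. The case $m<n$ needs no such chain: the two $u$-copies are disjoint, identifying them simply closes $w$ into a cycle of length $n$ with the correct labeling, which is $v$.
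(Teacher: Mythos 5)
Your construction produces exactly the word the paper uses -- $w$ is the length-$\left(l\left(u\right)+l\left(v\right)\right)$ segment obtained by traversing $v$ once and then continuing over $u$ again, which is $u\cdot v_{0}\cdot u$ when $u$ is a proper subword and $v^{n+1}v_{0}$ when $u=v^{n}v_{0}$ -- so this is essentially the paper's proof, packaged via the universal cover rather than an explicit case split on $l\left(u\right)<l\left(v\right)$ versus $l\left(u\right)\geq l\left(v\right)$. The argument is correct, and your chain argument for the overlapping case $l\left(u\right)\geq l\left(v\right)$ supplies precisely the verification that the paper leaves as ``one can check.''
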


\begin{proof}
We divide this into two cases:
\begin{enumerate}
\item $u$ is a proper subword i.e. $l\left(v\right)>l\left(u\right)$. 
\item $u$ is not a proper subword i.e. $l\left(u\right)\geq l\left(v\right)$ 
\end{enumerate}
\end{proof}
If $u$ is a proper subword than one can write $v=u\cdot v_{0}$ and
one can check that $w=u\cdot v_{0}\cdot u$. If $u$ is not a proper
subword than there exists $n\in\mathbb{N}$ a cyclically reduce representative
$v$ and a proper subword $v_{0}$ s.t. $u=v^{n}v_{0}$ then $w=v^{n+1}v_{0}$.
Let $v_{1}$be the proper subword s.t. $v=v_{0}v_{1}$ than $w=vv^{n-1}v_{0}v_{1}v_{0}$

\begin{prop}
\label{prop:homo}We prove propositions connecting the gluing lemmas
with the homomorphisms $\overleftarrow{p}_{k}-\overrightarrow{p}_{k},\hat{\pi}_{k},\widehat{\overleftarrow{p}_{k}-\overrightarrow{p}_{k}}$,
\begin{enumerate}
\item Let $v_{0},v_{1}\in\hat{M}_{k}$ be non negative vectors such that
$v_{1}$ is obtained by gluing two different components of $v_{0}$
along a common $k-1$-affix. Then $\hat{\pi}_{k}\left(v_{0}\right)=\hat{\pi}_{k}\left(v_{1}\right)$
and $\widehat{\overleftarrow{p}_{k}-\overrightarrow{p}_{k}}\left(v_{0}\right)=\widehat{\overleftarrow{p}_{k}-\overrightarrow{p}_{k}}\left(v_{1}\right)$.
\item Let $v\in\hat{M}_{k}$ be a non-negative vector and let $v_{0}\in M_{k}$
be the vector satisfying $\hat{\pi}_{k}\left(v\right)=v_{0}$. There
is a sequence of vectors $\ooton vn\in\hat{M}$ with $v_{n}=v$ such
that $v_{i+1}$ is obtained from $v_{i}$ by gluing two different
components of $v_{i}$ along a common $k-1$-affix.
\item $\left(\overleftarrow{p}_{k}-\overrightarrow{p}_{k}\right)\circ\hat{\pi}_{k}=\widehat{\overleftarrow{p}_{k}-\overrightarrow{p}_{k}}$
\end{enumerate}
\end{prop}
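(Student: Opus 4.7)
The plan is to prove (1), (2), and (3) in order, with (3) following formally from (1) and (2) by linearity. For (1), fix one gluing step in which components $\overline{w}$ and $\overline{v}$ of $v_{0}$ are glued along a shared $k-1$-affix $\overline{t}$ (in opposite intrinsic orientation) to produce a component $\overline{s}$ of $v_{1}$. The equality $\hat{\pi}_{k}(v_{0}) = \hat{\pi}_{k}(v_{1})$ reduces to $\hat{\pi}_{k}(\overline{s}) = \hat{\pi}_{k}(\overline{w}) + \hat{\pi}_{k}(\overline{v})$, which is the case $l(u) = k$ of Lemma~\ref{Gluing-lemma-1}, since $\hom(u,t) = \emptyset$ once $l(u) > l(t) = k-1$. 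For the $\widehat{\overleftarrow{p}_{k}-\overrightarrow{p}_{k}}$ equality, I argue directly: the two $k-1$-affixes of $\overline{s}$ are exactly the two affixes of $\overline{w}$ and $\overline{v}$ not used in the gluing, inherited with unchanged intrinsic orientation (the gluing acts only at the interior). The only affixes that disappear are the two copies of $\overline{t}$ that are glued: one on $\overline{w}$, pointing outward, and one on $\overline{v}$, pointing inward, by the opposite-orientation hypothesis. Each contributes $+\overline{t}$ to its corresponding piece of $\widehat{\overleftarrow{p}_{k}}(\overline{w}) + \widehat{\overleftarrow{p}_{k}}(\overline{v}) - \widehat{\overleftarrow{p}_{k}}(\overline{s})$ or $\widehat{\overrightarrow{p}_{k}}(\overline{w}) + \widehat{\overrightarrow{p}_{k}}(\overline{v}) - \widehat{\overrightarrow{p}_{k}}(\overline{s})$, so the two contributions cancel in the difference.

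For (2), I would build $v$ from $\hat{\pi}_{k}(v)$ one component at a time. Given a u-word component $\overline{w}$ of length $l \geq k$ with orientation $w = y_{1}\cdots y_{l}$, its length-$k$ subwords $w^{(i)} = y_{i}\cdots y_{i+k-1}$ for $1 \leq i \leq l-k+1$ satisfy: consecutive $w^{(i)}$ and $w^{(i+1)}$ share the $k-1$-affix $y_{i+1}\cdots y_{i+k-1}$ as a suffix of the former and a prefix of the latter, hence in opposite intrinsic orientation. Gluing $w^{(1)}$ with $w^{(2)}$, then the result with $w^{(3)}$, and so on for $l-k$ steps, reassembles $\overline{w}$. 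Executing this procedure for every u-word component of $v$, repeated $c_{\overline{w}}$ times for multiplicity $c_{\overline{w}}$, gives the required gluing sequence; a direct count of multiplicities identifies its starting vector (the disjoint union of all length-$k$ subwords taken with multiplicity) with $\hat{\pi}_{k}(v)$.

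Claim (3) then follows formally: by linearity it suffices to verify the identity on basis elements of $\hat{M}_{k}$, each of which is a non-negative vector. For such a $v$, apply (2) to obtain the sequence $v_{0} = \hat{\pi}_{k}(v), v_{1}, \ldots, v_{n} = v$, then iterate (1) to conclude $\widehat{\overleftarrow{p}_{k}-\overrightarrow{p}_{k}}(v) = \widehat{\overleftarrow{p}_{k}-\overrightarrow{p}_{k}}(v_{0})$. Since $v_{0} \in M_{k}$ and the extended operator restricts there to $\overleftarrow{p}_{k}-\overrightarrow{p}_{k}$, the right-hand side equals $(\overleftarrow{p}_{k}-\overrightarrow{p}_{k})(\hat{\pi}_{k}(v))$, as required. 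The main obstacle I anticipate is the affix bookkeeping in (1): one must verify uniformly, across all degenerate configurations, that the gluing consumes exactly one outward copy of $\overline{t}$ (on $\overline{w}$) and one inward copy (on $\overline{v}$), even when a source word already has both of its $k-1$-affixes equal to $\overline{t}$ and the functional $\widehat{\overleftarrow{\delta}_{t}}$ or $\widehat{\overrightarrow{\delta}_{t}}$ takes the value $2$. Once the orientations of the surviving outer endpoints of $\overline{s}$ are seen to be inherited unchanged from $\overline{w}$ and $\overline{v}$, the cancellation in $\widehat{\overleftarrow{p}_{k}-\overrightarrow{p}_{k}}$ is independent of the chosen u-word orientation $\sigma$.
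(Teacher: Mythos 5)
Your proposal is correct and follows essentially the same route as the paper: part (1) via the $l(u)=k$ case of the gluing lemma plus the cancellation of the two oppositely-oriented copies of the glued affix, part (2) by inductively reassembling each component from its length-$k$ subwords via consecutive overlaps (the paper peels off one $k$-affix at a time, which is the same decomposition read from the other end), and part (3) by combining (1) and (2) on non-negative generators. The extra bookkeeping you supply for (1), including the degenerate case where both affixes of a component coincide, is a correct elaboration of what the paper leaves implicit.
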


\begin{proof}
(1) The fact that $\hat{\pi}_{k}\left(v_{0}\right)=\hat{\pi}_{k}\left(v_{1}\right)$
follows from the gluing lemma (Lemma \ref{Gluing-lemma-1}). The fact
that $\widehat{\overleftarrow{p}_{k}-\overrightarrow{p}_{k}}\left(v_{0}\right)=\widehat{\overleftarrow{p}_{k}-\overrightarrow{p}_{k}}\left(v_{1}\right)$
follows because we glue along $k-1$ affixes situated in opposite
intrinsic orientation. (2) We show this for a single connected component
i.e. a word of length greater or equal to $k$ the general case follows.
Let $v$ be a $u$-word of length $k$ then $v_{0}=v$ and this is
the sequence. Let $v$ be a u-word of length $k+n$ and let $u_{1}$
be $k$-affix and $u_{2}$ a $k+n-1$-affix such that $u_{1}$ is
not a sub word of $u_{2}$ (i.e. these are affixes from opposite sides
of $v$). Then $u_{1},u_{2}$ have a common $k-1$-affix in opposite
orientation s.t. gluing along it gives $v$. Thus by the gluing lemma
(Lemma \ref{Gluing-lemma-1})
\[
\hat{\pi}_{k}\left(v\right)=\hat{\pi}_{k}\left(u_{1}\right)+\hat{\pi}_{k}\left(u_{2}\right)=u_{1}+\hat{\pi}_{k}\left(u_{2}\right)
\]
Thus we write $v_{n-1}=u_{1}+u_{2}$ and continue by induction to
decompose $u_{2}$. (3) Let $v\in\hat{M}$ be a non-negative vector,
$v_{0}=\hat{\pi}_{k}\left(v\right)$ and $v_{0},v_{1},\dots,v_{n}=v$
a sequence as in (2). Then 
\[
\left(\overleftarrow{p}_{k}-\overrightarrow{p}_{k}\right)\circ\hat{\pi}_{k}\left(v\right)=\overleftarrow{p}_{k}-\overrightarrow{p}_{k}\left(v_{0}\right)=\widehat{\overleftarrow{p}_{k}-\overrightarrow{p}_{k}}\left(v_{0}\right)=\widehat{\overleftarrow{p}_{k}-\overrightarrow{p}_{k}}\left(v_{i}\right)
\]
 for every $i$. Thus $\left(\overleftarrow{p}_{k}-\overrightarrow{p}_{k}\right)\circ\hat{\pi}_{k}\left(v\right)=\widehat{\overleftarrow{p}_{k}-\overrightarrow{p}_{k}}\left(v\right)$.
Lastly $\hat{M}_{k}$ is generated by non negative vectors therefor
$\left(\overleftarrow{p}_{k}-\overrightarrow{p}_{k}\right)\circ\hat{\pi}_{k}=\widehat{\overleftarrow{p}_{k}-\overrightarrow{p}_{k}}$. 
\end{proof}
\begin{prop}
\label{prop:pk-pk}We show claims about $\overleftarrow{p}_{k}-\overrightarrow{p}_{k}:M_{k}\to M_{k-1}$ 
\begin{enumerate}
\item Let $S=\left\{ v\in M_{k-1}|\sum_{u\in W_{k-1}}\left|u^{*}\left(v\right)\right|^{2}=2\right\} $
then $\left\langle S\right\rangle =\im\left(\overleftarrow{p}_{k}-\overrightarrow{p}_{k}\right).$
\item $\coker\left(\overleftarrow{p}_{k}-\overrightarrow{p}_{k}\right)\cong\nicefrac{\mathbb{Z}}{2\mathbb{Z}}$
(second property in Theorem \ref{thm:mainsection2})
\item Every $v\in\im\left(\overleftarrow{p}_{k}-\overrightarrow{p}_{k}\right)$
has a non negative preimage.
\item $\ker\overleftarrow{p}_{k}-\overrightarrow{p}_{k}$ is generated by
non-negative vectors.
\end{enumerate}
\end{prop}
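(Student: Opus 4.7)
The plan is to treat the four claims in sequence, with each claim built on the previous ones, leveraging the extended map $\widehat{D}$ and Proposition \ref{prop:homo}.

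First I would analyze $D := \overleftarrow{p}_k - \overrightarrow{p}_k$ on each basis element $\overline{w} \in W_k$. A length-$k$ u-word has exactly two $(k-1)$-affixes $\overline{u_1}, \overline{u_2}$; comparing $\sigma(\overline{u_i})$ to the two intrinsic orientations of each affix yields $D(\overline{w}) = \epsilon_2 \overline{u_2} - \epsilon_1 \overline{u_1}$ with $\epsilon_i \in \{\pm 1\}$. When $\overline{u_1} = \overline{u_2}$ (forced to be the case $\overline{w} = \overline{a^k}$), the two $(k-1)$-affixes sit in opposite intrinsic orientations and the contributions cancel, so $D(\overline{w}) = 0$; otherwise $D(\overline{w}) \in S$. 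Hence $\im D \subseteq \langle S \rangle$.

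For the reverse inclusion in (1), given distinct $\overline{u_1}, \overline{u_2} \in W_{k-1}$ and a target sign pattern $\epsilon_1 \overline{u_1} + \epsilon_2 \overline{u_2} \in S$, I would build a reduced word $W$ of length $\geq k$ whose leftmost (respectively rightmost) $k-1$ letters realize the appropriate orientation of $\overline{u_1}$ (respectively $\overline{u_2}$), inserting filler letters in the middle as needed to avoid cancellation. Proposition \ref{prop:homo}(3) gives $D(\hat\pi_k(W)) = \widehat{D}(W)$, and the telescoping of $\widehat{D}$ along the length-$k$ subwords of $W$ leaves only the two boundary $(k-1)$-affixes, yielding $\widehat{D}(W) = \epsilon_2 \overline{u_2} - \epsilon_1 \overline{u_1}$. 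Since $\hat\pi_k(W)$ is a non-negative sum of the length-$k$ subwords of $W$, this simultaneously supplies the reverse inclusion of (1) and a non-negative preimage of every $s \in S$.

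Part (2) follows from (1): the parity map $q: M_{k-1} \to \mathbb{Z}/2\mathbb{Z}$ sending each basis element to $1$ vanishes on $\langle S\rangle = \im D$, since each $s \in S$ has coefficient sum in $\{0, \pm 2\}$. Conversely, any $v \in \ker q$ (equivalently $\sum_u u^*(v) \in 2\mathbb{Z}$) is a $\mathbb{Z}$-combination of $S$-elements by pairing positive and negative coefficient entries and using the identity $2\overline{u} = (\overline{u} + \overline{u'}) + (\overline{u} - \overline{u'})$ (for any $\overline{u'} \neq \overline{u}$, which exists when $n \geq 2$) to absorb doubled terms. Hence $\coker D \cong \mathbb{Z}/2\mathbb{Z}$. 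Because $S = -S$, every $v \in \im D$ can in fact be written as a sum $v = \sum_j s_j$ with all $s_j \in S$ (no negative coefficients needed), so summing the non-negative preimages of each $s_j$ produced above yields a non-negative preimage of $v$, proving (3).

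The main obstacle is (4). I would first establish a domination lemma: for every $u \in W_k$ there is a non-negative $z_u \in \ker D$ with $u^*(z_u) \geq 1$. To build $z_u$, choose a cyclic word $w$ containing $u$ as a subword; by Proposition \ref{selfgluing} there is a u-word $w'$ whose self-gluing along a common $(k-1)$-affix in opposite intrinsic orientations produces $w$, and the Gluing Lemma together with Proposition \ref{prop:homo}(1) gives $\hat\pi_k(w') = \pi_k(w)$. Tracking the extended functionals shows $\widehat{D}(w') = 0$, because the two boundary $(k-1)$-affixes of $w'$ represent the same u-word in opposite intrinsic orientations and their contributions to $\widehat{\overleftarrow{p}_k}$ and $\widehat{\overrightarrow{p}_k}$ cancel; by Proposition \ref{prop:homo}(3), $D(\pi_k(w)) = 0$, so $z_u := \pi_k(w)$ works. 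Now given $x \in \ker D$, split $x = x_+ - x_-$ into its non-negative and non-positive parts, set $v := D(x_+) = D(x_-)$, apply (3) to get a non-negative preimage $y_0$ of $v$, and add a suitable non-negative combination $\sum_u c_u z_u$ so that $y := y_0 + \sum_u c_u z_u \geq x_+$ componentwise, while still $D(y) = v$. Then $y - x_+$ and $y - x_-$ are non-negative elements of $\ker D$ and $x = (y - x_-) - (y - x_+)$ expresses $x$ as a $\mathbb{Z}$-combination of non-negative kernel elements, proving (4). The delicate step is the verification that $\widehat{D}(w') = 0$ on the self-glued configuration, which requires the extended functionals $\widehat{\overleftarrow{\delta}_u}, \widehat{\overrightarrow{\delta}_u}$ to correctly handle the edge case in which both $(k-1)$-affixes of $w'$ represent the same u-word.
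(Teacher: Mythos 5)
Your parts (1)--(3) track the paper's proof almost exactly: the same computation of $\overleftarrow{p}_{k}-\overrightarrow{p}_{k}$ on a basis word via its two $k-1$-affixes (including the degenerate case $\overline{a^{k}}$, which the paper leaves implicit), the same realization of each element of $S$ as $\left(\overleftarrow{p}_{k}-\overrightarrow{p}_{k}\right)\circ\hat{\pi}_{k}\left(\overline{u_{1}^{\epsilon_{3}}xu_{2}^{\epsilon_{4}}}\right)$ with a filler letter and Proposition \ref{prop:homo} doing the bookkeeping, and the same parity functional for the cokernel. Part (4) is where you genuinely diverge. Writing $D=\overleftarrow{p}_{k}-\overrightarrow{p}_{k}$, the paper's argument is: decompose $w=w'-w''$ into non-negative parts, use (3) to get one non-negative preimage $u$ of $-D\left(w'\right)=-D\left(w''\right)$, and observe that $w'+u$ and $w''+u$ are non-negative kernel elements whose difference is $w$ --- no componentwise domination is needed. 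Your route instead manufactures, for each basis word, a non-negative kernel element $z_{u}$ via $\pi_{k}$ of a cyclic word (importing the gluing machinery and the inclusion $\im\pi_{k}\subset\ker D$ that the paper only deploys in the \emph{next} proposition) and then pads a preimage of $D\left(x_{+}\right)$ until it dominates. This works, with two caveats. First, as written you only arrange $y\geq x_{+}$, which does not yield $y-x_{-}\geq0$; since $x_{+}$ and $x_{-}$ have disjoint supports you must demand $y\geq x_{+}+x_{-}$, which is achievable by adding more of the $z_{u}$, so the fix is one line. Second, the domination lemma is dispensable entirely once you notice you can add the \emph{same} non-negative preimage of $-D\left(x_{+}\right)$ to both halves. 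What your version buys is an explicit non-negative kernel element lying in $\im\pi_{k}$, foreshadowing the identification $\im\pi_{k}=\ker\left(\overleftarrow{p}_{k}-\overrightarrow{p}_{k}\right)$ proved afterwards; what it costs is extra machinery and the componentwise estimate the paper's proof avoids.
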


\begin{proof}
(1) The set $S$ is the set of vectors that are linear combination
of exactly two length $k-1$ u-words with coefficients $1$ or $-1$.
It is clear that $\left\langle S\right\rangle \supset\im\left(\overleftarrow{p}_{k}-\overrightarrow{p}_{k}\right)$
because the u-words of length $k$ generate $M_{k}$ and every word
has exactly two $k-1$-affixes. In the other direction let $v\in S$
then there exists $\overline{u}_{1},\overline{u}_{2}\in W_{k-1}$
and $\epsilon_{1},\epsilon_{2}\in\left\{ 0,1\right\} $ such that
$v=\left(-1\right)^{\epsilon_{1}}\overline{u_{1}}+\left(-1\right)^{\epsilon_{2}}\overline{u_{2}}$.
Let $u_{1},u_{2}$ be the orientations of $\overline{u_{1}},\overline{u_{2}}$.
Then depending on the u-word orientation there exists $\epsilon_{3},\epsilon_{4}\in\left\{ -1,1\right\} $
and a letter $x$ such that there is not cancellation in $u_{1}^{\epsilon_{3}}xu_{2}^{\epsilon_{4}}$
and
\[
\left(-1\right)^{\epsilon_{1}}\overline{u_{1}}+\left(-1\right)^{\epsilon_{2}}\overline{u_{2}}=\widehat{\overleftarrow{p}_{k}-\overrightarrow{p}_{k}}\left(\overline{u_{1}^{\epsilon_{3}}xu_{2}^{\epsilon_{4}}}\right)=\left(\overleftarrow{p}_{k}-\overrightarrow{p}_{k}\right)\circ\hat{\pi}_{k}\left(\overline{u_{1}^{\epsilon_{3}}xu_{2}^{\epsilon_{4}}}\right)
\]
 second equality is (1) in Proposition \ref{prop:homo}. (2) We define
the functional $\lambda\in M_{k-1}^{*}$ by $\lambda\left(\sum_{u\in W_{k-1}}a_{u}u\right)=\sum_{u\in W_{k-1}}a_{u}$.
We notice that $\lambda\left(v\right)$ is even for every $v\in S$
. Let $u_{0}\in W_{k-1}$. Because $\lambda\left(u_{0}\right)=1$
we conclude $u_{0}\nin\left\langle S\right\rangle $. We show that
$S\cup\left\{ u_{0}\right\} $ generates $M_{k-1}$. If $u\in W_{k-1}$
then $u=u_{0}+\left(u-u_{0}\right)$ and $\left(u-u_{0}\right)\in S$.
We also notice that $2u_{0}=\left(u_{0}-u\right)+\left(u_{0}+u\right)\in\left\langle S\right\rangle $.
Thus $\nicefrac{\mathbb{Z}}{2\mathbb{Z}}\cong M_{k-1}/\im\left(\overleftarrow{p}_{k}-\overrightarrow{p}_{k}\right)$.
(3) Let $v\in\im\left(\overleftarrow{p}_{k}-\overrightarrow{p}_{k}\right)$
then by ($1$) there exists $\oton ul\in S$ such that $v=\sum c_{i}u_{i}$.
We notice that $S=-S$ thus we can choose $c_{i}$ to be positive.
We notice that for every $\epsilon_{3},\epsilon_{4},u_{1},u_{2}$
the vector $\hat{\pi}_{k}\left(\overline{u_{1}^{\epsilon_{3}}xu_{2}^{\epsilon_{4}}}\right)$
is non-negative thus every element in $S$ has a non negative preimage
so there exists non-negative vectors $\oton vl\in M_{k}$ such that
$\left(\overleftarrow{p}_{k}-\overrightarrow{p}_{k}\right)\left(v_{i}\right)=u_{i}$.
Thus $\left(\overleftarrow{p}_{k}-\overrightarrow{p}_{k}\right)\left(\sum c_{i}v_{i}\right)=v$
but $\sum c_{i}v_{i}$ is a non-negative combination of non-negative
vectors thus it is non-negative. (4) Let $w\in\ker\overleftarrow{p}_{k}-\overrightarrow{p}_{k}$
then we can write $w=w'-w''$ such that $w'$ and $w''$ are both
non negative. If $\overleftarrow{p}_{k}-\overrightarrow{p}_{k}\left(w'\right)=\overleftarrow{p}_{k}-\overrightarrow{p}_{k}\left(w''\right)=0$
the $w$ is a combination of non negative vectors. Assume $\overleftarrow{p}_{k}-\overrightarrow{p}_{k}\left(w'\right)\neq0$
then by (3) there exists a non negative preimage $u$ such that $\overleftarrow{p}_{k}-\overrightarrow{p}_{k}\left(-w'\right)=\overleftarrow{p}_{k}-\overrightarrow{p}_{k}\left(-w''\right)=\overleftarrow{p}_{k}-\overrightarrow{p}_{k}\left(u\right)$.
We write $v=\left(v'+u\right)-\left(u+v''\right)$ and notice that
$v'+u,v''+u$ are non-negative and $v'+u,v''+u\in\ker\overleftarrow{p}_{k}-\overrightarrow{p}_{k}$
.
\end{proof}
\begin{prop}
We conclude by proving the two remaining statements from Theorem \ref{thm:mainsection2}
\begin{enumerate}
\item $\im\pi_{k}=\tilde{M}_{k}\left(F_{n}\right)=\ker\overleftarrow{p}_{k}-\overrightarrow{p}_{k}$ 
\item We denote $p_{k}=\overleftarrow{p}_{k}|_{\tilde{M}_{k}}=\overrightarrow{p}_{k}|_{\tilde{M_{k}}}$
we show that $p_{k}:\tilde{M}_{k}\to\tilde{M}_{k-1}$ is an inverse
system such that $p_{k}\circ\pi_{k}=\pi_{k-1}$
\end{enumerate}
\end{prop}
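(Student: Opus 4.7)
Proof proposal: The statement has two parts: the identification $\tilde M_k = \ker(\overleftarrow{p}_k - \overrightarrow{p}_k)$ (part 1), and the inverse-system and compatibility assertions about $p_k$ (part 2). Part 2 will follow from part 1 together with the explicit formula for $\overleftarrow{p}_k \pi_k$.

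Easy inclusion $\im \pi_k \subseteq \ker(\overleftarrow{p}_k - \overrightarrow{p}_k)$: for a cyclic word $w$ I would check directly that $\overleftarrow{p}_k \pi_k(w) = \pi_{k-1}(w) = \overrightarrow{p}_k \pi_k(w)$. In a cyclic word every occurrence of a length-$(k-1)$ subword $u$ extends in exactly one way to a length-$k$ subword on each side, so summing over length-$k$ subwords of $w$ and collecting outward (resp.\ inward) affixes recovers $\pi_{k-1}(w)$. This computation is precisely what is needed for part 2: assuming part 1, it shows $\overleftarrow{p}_k|_{\tilde M_k} = \overrightarrow{p}_k|_{\tilde M_k}$, gives $p_k \pi_k = \pi_{k-1}$, and places $p_k(\tilde M_k) \subseteq \tilde M_{k-1}$, so the $p_k$ assemble into an inverse system.

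Hard inclusion $\ker(\overleftarrow{p}_k - \overrightarrow{p}_k) \subseteq \im \pi_k$: by Proposition \ref{prop:pk-pk}(4) the kernel is generated by non-negative vectors, so it suffices to take $v \in \ker(\overleftarrow{p}_k - \overrightarrow{p}_k)$ with $v \geq 0$ and produce a non-negative $w \in \mathbb{Z}[C]$ with $\pi_k(w) = v$. Working inside $\hat M_k$ (where $\hat\pi_k|_{M_k} = \id$ and, by Proposition \ref{prop:homo}(3), $\widehat{\overleftarrow{p}_k - \overrightarrow{p}_k}(v) = 0$), view $v$ as a disjoint union of length-$k$ u-word components. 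I would iterate the following greedy procedure: as long as some component is still a u-word, pick a pair of $(k-1)$-affixes in the current state that carry the same underlying u-word in opposite intrinsic orientation and glue along it. Inter-component gluings are governed by Proposition \ref{prop:homo}(1) and self-gluings by Proposition \ref{selfgluing}; both moves preserve $\hat\pi_k$ and the auxiliary invariant $\widehat{\overleftarrow{p}_k - \overrightarrow{p}_k}$.

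The main obstacle is verifying that the procedure always terminates with every component being a cyclic word. Termination is monovariant: each move removes two endpoints and the initial number of endpoints is finite. For the absence of stuck states, suppose the current state contains a u-word component with outward $(k-1)$-affix $u_{\mathrm{out}}$; then $\widehat{\overleftarrow{\delta}}_{u_{\mathrm{out}}}$ evaluated on the current state is positive, and the vanishing of $\widehat{\overleftarrow{p}_k - \overrightarrow{p}_k}$ forces a matching inward occurrence of $u_{\mathrm{out}}$ somewhere in the state, so another gluing (inter-component, or self-gluing if the matched affix lies on the same component) is available. Hence the procedure halts only when every remaining component is cyclic, giving $v = \hat\pi_k(\text{final state}) = \pi_k(w)$ for some non-negative $w \in \mathbb{Z}[C]$, which finishes (1) and thereby (2).
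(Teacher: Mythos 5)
Your proposal is correct and follows essentially the same route as the paper: the substantive reverse inclusion $\ker(\overleftarrow{p}_{k}-\overrightarrow{p}_{k})\subseteq\im\pi_{k}$ is handled exactly as in the text, by reducing to non-negative vectors via Proposition \ref{prop:pk-pk}(4) and then greedily gluing components along matching $k-1$-affixes of opposite intrinsic orientation using Propositions \ref{prop:homo} and \ref{selfgluing}, with your explicit termination monovariant being a welcome addition. The only (harmless) deviation is in the easy direction: you verify $\overleftarrow{p}_{k}\pi_{k}(w)=\pi_{k-1}(w)=\overrightarrow{p}_{k}\pi_{k}(w)$ by a direct affix-extension count, which simultaneously yields part (2), whereas the paper deduces the inclusion from the self-gluing construction (Propositions \ref{swordtocyclic}, \ref{selfgluing}, \ref{prop:homo}(3)) and obtains part (2) separately from the identity $\hat{\pi}_{k-1}\circ\pi_{k}=2\pi_{k-1}$ after dividing by $2$.
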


\begin{proof}
(1) Let $w$ be a cyclic word. Choose an arbitrary subword $u$ of
$w$ of length $k-1$. By Proposition \ref{swordtocyclic} there is
an u-word $v$ s.t. if we glue $v$ to itself along $u$ we get $w$.
Then by Proposition \ref{selfgluing} we have $\pi_{k}\left(w\right)=\hat{\pi}_{k}\left(v\right)$.
Also clearly $\widehat{\overleftarrow{p}_{k}-\overrightarrow{p}_{k}}\left(v\right)=\overline{u}-\overline{u}=0$.
Thus 
\[
\left(\overleftarrow{p}_{k}-\overrightarrow{p}_{k}\right)\circ\pi_{k}\left(w\right)=\left(\overleftarrow{p}_{k}-\overrightarrow{p}_{k}\right)\circ\hat{\pi}_{k}\left(v\right)=\widehat{\overleftarrow{p}_{k}-\overrightarrow{p}_{k}}\left(v\right)=0
\]
 i.e. $\im\pi_{k}\subset\ker\overleftarrow{p}_{k}-\overrightarrow{p}_{k}$.
On the other hand let $v_{0}\in\ker\overleftarrow{p}_{k}-\overrightarrow{p}_{k}$
be a non-negative vector. By (4) in Proposition \ref{prop:pk-pk}
it is enough to show that $v_{0}\in\im\pi_{k}$ . If all the components
of $v$ are u-words that have a $k-1$-affix situated in them twice
in opposite intrinsic orientation than by Proposition \ref{selfgluing}
we can lift $v_{0}$ to $v\in\mathbb{Z}\left[C\right]$ s.t. $\pi_{k}\left(v\right)=v_{0}$
by gluing each component to itself along its $k-1$-affix. If not
let $w$ be a component of $v$ that does not have the above property
and let $u$ be a $k-1$-affix of $w$. Because $\overleftarrow{p}_{k}-\overrightarrow{p}_{k}\left(v\right)=0$
there exists another component of $v$ with $u$ situated in it in
intrinsic orientation opposite to $w$ we denote it by $t$. Let $v_{1}$
be the vector in $\hat{M}_{k}$ obtained by gluing $w$ and $t$ along
$u$. By (1) in Proposition \ref{prop:homo} $\hat{\pi}_{k}\left(v_{1}\right)=v_{0}$
and $\widehat{\overleftarrow{p}_{k}-\overrightarrow{p}_{k}}\left(v_{1}\right)=0$.
Now we can recursively apply the previous step until we get $v_{n}$
s.t. all its components are u-words that have the same $k-1$-affix
situated in them twice in opposite intrinsic orientation. The vector
$v_{n}$ can be lifted to a vector $v\in\mathbb{Z}\left[C\right]$
s.t. $\pi_{k}\left(v\right)=\hat{\pi}_{k}\left(v_{n}\right)=v_{0}$
by gluing and Proposition  \ref{selfgluing}. (2) We show that $\hat{\pi}_{k-1}|_{M_{k}}=\overleftarrow{p}_{k}+\overrightarrow{p}_{k}$.
Let $v$ be a length $k$ u-word then it has exactly two $k-1$-subwords
moreover these are its $k-1$-affixes therefore $\hat{\pi}_{k-1}\left(v\right)=\left(\overleftarrow{p}_{k}+\overrightarrow{p}_{k}\right)\left(v\right)$.
We show that $\hat{\pi}_{k-1}\circ\pi_{k}=2\pi_{k-1}$. If $v$ is
a cyclic word of length $1$ then $\pi_{k}\left(v\right)=\overline{v^{k}},\hat{\pi}_{k-1}\left(\overline{v^{k}}\right)=2\overline{v^{k-1}}=2\pi_{k-1}\left(v\right)$.
Let $v$ be a cyclic word of length greater then $1$. Let $u$ be
a subword of length $k-1$ with a graph morphism $u\to v$ then there
are two different length $k$ subwords $w_{1},w_{2}$ such that $u\to w_{1}\to v,u\to w_{2}\to v$.
We get $w_{1}$ by adding the subsequent letter to $u$ in $v$ on
one side of $u$ and get $w_{2}$ by adding the subsequent letter
to $u$ in $v$ on the other side of $u$. This shows that for every
$u\to v$ there are two different morphisms $u\to_{1}\pi_{k}\left(v\right)$
and $u\to_{2}\pi_{k}\left(v\right)$. But every $u$ that is a subword
of $\pi_{k}\left(v\right)$ is also a subword of $v$. This shows
that $\hat{\pi}_{k-1}\circ\pi_{k}\left(v\right)=2\pi_{k-1}\left(v\right)$.
  We notice that $\hat{\pi}_{k-1}|_{\tilde{M}_{k}}=\overleftarrow{p}_{k}|_{\tilde{M}_{k}}+\overrightarrow{p}_{k}|_{\tilde{M}_{k}}=2p_{k}$
and because $\tilde{M}_{k}=\im\pi_{k}$ we get 
\[
2\pi_{k-1}=\hat{\pi}_{k-1}\circ\pi_{k}=\hat{\pi}_{k-1}|_{\tilde{M}_{k}}\circ\pi_{k}=2p_{k}\circ\pi_{k}
\]
 thus $p_{k}\circ\pi_{k}=\pi_{k-1}$.  
\end{proof}
In conclusion we have a homomorphism $\pi:\mathbb{Z}\left[C\right]\to\varprojlim\tilde{M}_{k}$.

\subsection{Properties of $\pi$}

We now try to understand the kernel of $\pi$. 
\begin{prop}
Let $w$ be a cyclically reduced word. We look at $w$ and $w^{l}$
as cyclic words then $l\pi_{k}\left(w\right)=\pi_{k}\left(w^{l}\right)$
for every $k$. Thus 
\[
\left\langle lw-w^{l}|w\text{ cyclicaly reduced},l\in\mathbb{N}\right\rangle <\ker\pi
\]
. 
\end{prop}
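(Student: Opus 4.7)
The statement is a ``covering-type'' identity. My plan is to prove the stronger pointwise equality $|\hom(\overline{u},w^{l})|=l\cdot|\hom(\overline{u},w)|$ for every u-word $\overline{u}$ of length $k$; summing over $\overline{u}\in W_{k}$ then gives $\pi_{k}(w^{l})=l\pi_{k}(w)$ in $M_{k}$ for every $k$, whence $\pi(w^{l})=l\pi(w)$ in $\varprojlim\tilde{M}_{k}$ and therefore $lw-w^{l}\in\ker\pi$.

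The first step is to construct the relevant covering. Since $w$ is cyclically reduced of length $m:=l(w)$, the cyclic word $w^{l}$ is also cyclically reduced of length $lm$; both are folded $X$-labeled circle graphs. Labeling the $lm$ oriented edges of $w^{l}$ cyclically by $e_{0},\dots,e_{lm-1}$ and the $m$ oriented edges of $w$ by $f_{0},\dots,f_{m-1}$ (with labels matching $w$), there is a natural labeled graph morphism $p\colon w^{l}\to w$ sending $e_{i}$ to $f_{i\bmod m}$. This $p$ is a local isomorphism of labeled graphs, hence a covering map of degree $l$.

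The second step is the lifting count. A u-word $\overline{u}$ of length $k$ is, by definition, a folded labeled graph homeomorphic to a compact segment, so it is simply connected. For any morphism $f\colon\overline{u}\to w$, pick a vertex $v_{0}\in\overline{u}$; the fibre $p^{-1}(f(v_{0}))\subset w^{l}$ has exactly $l$ elements. Because $p$ is a covering and $\overline{u}$ is simply connected, for each choice of a preimage $\tilde{f}(v_{0})\in p^{-1}(f(v_{0}))$ there exists a unique lift $\tilde{f}\colon\overline{u}\to w^{l}$ with $p\circ\tilde{f}=f$: concretely, once the image of $v_{0}$ is fixed, the images of the other vertices and edges are forced edge-by-edge by the labels of $\overline{u}$ (local isomorphism). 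Conversely every morphism $\overline{u}\to w^{l}$ projects via $p$ to a morphism $\overline{u}\to w$, and recovers the lift from its value at $v_{0}$. This yields a bijection
\[
\hom(\overline{u},w^{l})\;\longleftrightarrow\;\hom(\overline{u},w)\times p^{-1}(f(v_{0})),
\]
whose cardinality identity is exactly the desired $|\hom(\overline{u},w^{l})|=l\cdot|\hom(\overline{u},w)|$.

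The only real point to be careful about is interpreting the lifting property combinatorially in the folded-graph setting (in particular, one must check the argument handles u-words $\overline{u}$ that are longer than $m$, which may wrap many times around $w$); this is routine because $p$ is an immersion and $\overline{u}$ is a path, so the extension of a partial lift along each successive edge of $\overline{u}$ is unique. Once this is in place, applying the identity for all $k$ gives $\pi_{k}(w^{l})=l\pi_{k}(w)$ in every $M_{k}$, and the containment in $\ker\pi$ follows.
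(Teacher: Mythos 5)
Your argument is correct and is essentially the paper's own proof: both observe that $w^{l}\to w$ is a degree-$l$ covering of labeled graphs and that a u-word is simply connected, so each morphism $u\to w$ has exactly $l$ lifts, giving $\left|\hom\left(u,w^{l}\right)\right|=l\left|\hom\left(u,w\right)\right|$ and hence $\pi_{k}\left(w^{l}\right)=l\pi_{k}\left(w\right)$ for all $k$. You merely spell out the lifting bijection in more combinatorial detail than the paper does.
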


\begin{proof}
We notice that the morphism $w^{l}\to w$ is an $l$-cover space of
the cyclic word $w$ we also notice that a u-word is simply connected.
Let $u$ be a u-word any morphism $u\to w$ has exactly $l$ different
lifts to $w^{l}$ thus 
\[
\left|\hom\left(u,w^{l}\right)\right|=l\left|\hom\left(u,w\right)\right|
\]
. Thus we get $l\pi_{k}\left(w\right)=\pi_{k}\left(w^{l}\right)$
for every $k$ and $l$.
\end{proof}
We would like to show that $\ker\pi=\left\langle lw-w^{l}|w\text{ cyclicaly reduced},l\in\mathbb{N}\right\rangle $
for this we need a lemma with a somewhat technical proof.
\begin{lem}
\label{lem:notPower}Let $w$ be a cyclically reduced word. The word
$w$ is a nontrivial power if and only if for every cyclic permutation
$w'$ there are words $t$ and $v$ such that $w'=t\cdot v\cdot t$
where t and $v$ are proper subwords. $t\neq1$ but $v$ may be trivial. 
\end{lem}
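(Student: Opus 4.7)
The forward direction is immediate: if $w = u^n$ with $n \geq 2$ and $u$ nontrivial, every cyclic permutation of $w$ has the form $(u')^n$ for some cyclic permutation $u'$ of $u$, so we may take $t = u'$ and $v = (u')^{n-2}$ (with $v = 1$ when $n = 2$); no cancellation occurs because $w$ is cyclically reduced.

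For the reverse direction I would argue by contrapositive. The decomposition $w' = tvt$ with $t \neq 1$ is precisely the condition that $w'$ has a nontrivial \emph{border}: a nonempty proper word that is both a prefix and a suffix of $w'$. The claim thus reduces to showing that every primitive (not a nontrivial power), cyclically reduced word $w$ has a cyclic rotation that is unbordered. My plan is to use Lyndon words: fix any total order on $X \cup X^{-1}$ and extend lexicographically to reduced words, and let $\ell$ be the lex-smallest cyclic rotation of $w$. Since $w$ is primitive its $|w|$ rotations are pairwise distinct, so $\ell$ is strictly less than every other rotation, i.e.\ $\ell$ is a Lyndon word.

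It then suffices to show that every Lyndon word is unbordered, which I would do by a short symmetric argument. Suppose for contradiction that $\ell = \alpha t = t\gamma$ has a nontrivial border $t$, so that $|\alpha| = |\gamma| \geq 1$. The cyclic rotations $\gamma t$ and $t\alpha$ of $\ell$ both differ from $\ell$: indeed $\ell = \gamma t$ would force $t\gamma = \gamma t$, whence the Lyndon--Sch\"utzenberger commutation theorem (commuting words in a free monoid are powers of a common word) would give $\ell$ as a proper power of that common word, contradicting primitivity; the case $\ell = t\alpha$ is symmetric. The Lyndon property therefore yields $\ell < \gamma t$ strictly, which after cancelling the common suffix $t$ from $\ell = \alpha t$ and $\gamma t$ becomes $\alpha < \gamma$, and also $\ell < t\alpha$, which after cancelling the common prefix $t$ from $\ell = t\gamma$ and $t\alpha$ becomes $\gamma < \alpha$. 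The contradiction $\alpha < \gamma < \alpha$ completes the argument. The only nontrivial external ingredient is the Lyndon--Sch\"utzenberger theorem, a short classical result in combinatorics on words.
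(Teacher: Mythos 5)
Your proof is correct, but it takes a genuinely different route from the paper. The paper argues directly and by hand: it picks an irreducible decomposition $t\cdot v\cdot t$ with $t$ of maximal length, and then runs a lengthy double induction over cyclic permutations of the form $v_k\cdot t\cdot t\cdot\hat{s}_k$, tracking "concatenations of suffixes of $t$" until every case forces either a contradiction or a power of $t$. You instead reduce the statement to a classical fact from combinatorics on words: a decomposition $w'=t\cdot v\cdot t$ with $t\neq1$ exhibits $t$ as a nonempty proper prefix and suffix of $w'$ (only this trivial half of the "border" equivalence is actually needed for your contrapositive), so it suffices to produce an unbordered rotation of a primitive $w$, and the lexicographically least rotation (a Lyndon word) does the job. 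Your verification that Lyndon words are unbordered is sound: the cancellation of the common suffix $t$ in $\alpha t<\gamma t$ and of the common prefix $t$ in $t\gamma<t\alpha$ is legitimate because $|\alpha|=|\gamma|$ and the words compared have equal length, and the degenerate cases $\ell=\gamma t$ or $\ell=t\alpha$ are correctly excluded via Lyndon--Sch\"utzenberger together with primitivity of $\ell$ (which is inherited from $w$, as rotations of primitive words are primitive). Note also that since $w$ is cyclically reduced, every rotation is reduced and every monoid factorization is automatically cancellation-free, so the free-group notion of "proper subword" coincides with ordinary factorization and your free-monoid argument applies verbatim. What your approach buys is a proof an order of magnitude shorter and anchored in standard theory (distinctness of the $|w|$ rotations of a primitive word, Lyndon words, the commutation theorem); what the paper's approach buys is self-containment, at the cost of a substantially more intricate case analysis.
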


\begin{proof}
One direction is easy: If $w=u^{k}$ with $k\geq2$ it can be decomposed
as $uu^{k-2}u$. A cyclic permutation of a power is also a power and
thus it satisfies the condition. Suppose $w$ satisfies the condition.
We say a decomposition $t\cdot v\cdot t$ of a cyclic permutation
of $w$ is reducible if $t$ has the form $t=s\cdot t_{0}\cdot s$
where $s$ and $t_{0}$ are proper subwords ($s\neq1$, $t_{0}$ may
be trivial) and irreducible otherwise. Let $t\cdot v\cdot t$ be an
irreducible decomposition of a cyclic permutation of $w$ such that
$l\left(t\right)$ is maximal among all irreducible decompositions
of cyclic permutations of $w$. We want to show that $w$ has a cyclic
permutation that is equal to $t^{k}$ for some $k$. If $v=1$ then
$w$ is a power, therefore assume $v\neq1$. Let us examine the cyclic
permutation $v\cdot t\cdot t$. Because $w$ satisfies the condition
in the lemma $v\cdot t\cdot t$ has an irreducible decomposition $s\cdot u\cdot s$.
Because $t$ is maximal this means that $s$ is a proper suffix of
$t$ or $s=t$. We want to show that $s=t$.
\begin{rem*}
If we show that $t$ has proper suffix $a$ that is also a proper
prefix we will reach a contradiction. For $l\left(a\right)<\frac{l\left(t\right)}{2}$
this is clear because we can write $t=a\cdot t_{0}\cdot a$ i.e. $t\cdot v\cdot t$
is reducible. For $l\left(a\right)\geq\frac{l\left(t\right)}{2}$.
We write $t=t_{1}\cdot a=a\cdot t_{2}$ this the same kind of equation
over a free semigroup as in Proposition  \ref{selfgluing}. As before
we have a family of solutions $t_{1}=xy,t_{2}=yx,a=\left(xy\right)^{r-1}x$
with $x,y$ reduced words such that $x\cdot y$ and $y\cdot x$ and
$r\in\mathbb{N}$ with $x\neq1$ and $y$ possibly trivial. Thus $t=at_{2}=\left(xy\right)^{r-1}xyx=x\left(y\left(xy\right)^{r-1}\right)x$
which is a contradiction to $tvt$ being an irreducible decomposition.
 
\end{rem*}
Assume $l\left(s\right)<l\left(t\right)$. Thus $s$ is a suffix of
$t$ and we write $t=t_{1}\cdot s$ . We divide into three different
cases
\begin{enumerate}
\item $l\left(s\right)>l\left(v\right)$: We write $s=s_{1}\cdot s_{2}$
such that $s_{1}=v$ and $t=s_{2}\cdot t_{2}$. Thus $s_{2}\cdot t_{2}=t=t_{1}\cdot s=t_{1}s_{1}\cdot s_{2}$
. We see that $s_{2}$ is a both proper prefix and a proper suffix
of $t$ this is a contradiction.  
\item $l\left(s\right)=l\left(v\right)$: Then $s=v$ and we consider the
cyclic permutation $t\cdot t\cdot s$. It has an irreducible decomposition
$s_{1}\cdot u_{1}\cdot s_{1}$. We notice $l\left(s_{1}\right)\leq l\left(t\right)$
because $t$ is maximal. We write $t=s_{1}\cdot t_{2}$ ($s_{1}\neq1$
but $t_{2}$ is possibly trivial) 
\begin{enumerate}
\item If $l\left(s_{1}\right)\leq l\left(s\right)<l\left(t\right)$ then
$s_{1}$ is a prefix of $t$ but it is also a suffix of $s$ (or equal
to s) thus a suffix of $t$ this is a contradiction.
\item If $l\left(s_{1}\right)>l\left(s\right)$ then we write $s_{1}=s_{11}\cdot s_{12}$
, $t=t_{3}\cdot s_{11}$, $s_{12}=s$. We get $t_{3}\cdot s_{11}=t=s_{1}\cdot t_{2}=s_{11}\cdot s_{12}\cdot t_{2}$.
Thus $s_{11}$ is both a proper prefix and a proper suffix of $t$
this is a contradiction. (This still holds of $t_{2}=1$)
\end{enumerate}
\item $l\left(s\right)<l\left(v\right)$: We write $v=s\cdot v_{1}$ and
$v\cdot t\cdot t=s\cdot v_{1}\cdot t\cdot t$. We denote $s=\hat{s}_{1}$
and proceed by induction. 
\end{enumerate}
We examine the permutation $v_{k}\cdot t\cdot t\cdot\hat{s}_{k}$.
The subword $\hat{s}_{k}$ satisfies the following properties: (1)
It is a concatenation of suffixes of $t$. (2) If $r$ is a suffix
of $\hat{s}_{k}$ or $\hat{s}_{k}$ itself and it satisfies $l\left(r\right)>l\left(s\right)$
then it is a concatenation of two or more suffixes of $t$. We notice
that $\hat{s}_{1}=s$ satisfies properties (1) and (2). If $u$ satisfies
properties $(1)$ and (2) then any suffix of $u$ also satisfies these
properties. If $u_{1}$ and $u_{2}$ both satisfy properties (1) and
(2) then there concatenation $u_{1}\cdot u_{2}$ satisfies the properties
as well.  The cyclic permutation $v_{k}\cdot t\cdot t\cdot\hat{s}_{k}$
has an irreducible decomposition $s_{k}\cdot u_{k}\cdot s_{k}$. Because
$t$ is maximal $l\left(t\cdot\hat{s}_{k}\right)>l\left(s_{k}\right)$
this means $s_{k}$ is suffix of $t\cdot\hat{s}_{k}$. We notice that
$s_{k}$ satisfies properties (1) and (2). We divide into three cases: 
\begin{enumerate}
\item $l\left(s_{k}\right)>l\left(v_{k}\right)$: Because $t$ is maximal
then $l\left(s_{k}\right)<l\left(v_{k}\cdot t\right)$ thus there
is suffix of $s_{k}$ that is a prefix of $t$ denote it by $s_{k1}$.
The subword $s_{k1}$ satisfies properties (1) and (2). Thus $s_{k1}$
has a prefix that is a suffix of $t$ or it is itself a suffix of
$t$. Either way $t$ has a prefix that is also its suffix and this
is a contradiction.
\item If $l\left(s_{k}\right)=l\left(v_{k}\right)$ then $v_{k}\cdot t\cdot t\cdot\hat{s}_{k}=s_{k}\cdot t\cdot t\cdot\hat{s}_{k}$
then there is a cyclic permutation $t\cdot t\cdot\hat{s}_{k}\cdot s_{k}$
. We notice $\hat{s}_{k}s_{k}$ satisfies properties (1) and (2).
We write $\hat{s}_{k}s_{k}=\hat{s}'_{k}$ and examine $tt\hat{s}'_{k}.$
The cyclic permutation $tt\hat{s}'_{k}$ has a irreducible decomposition
$s'_{k}\cdot u'_{k}\cdot s'_{k}$ we notice that $l\left(s'_{k}\right)<l\left(t\cdot\hat{s}'_{k}\right)$
thus $s'_{k}$ is a suffix of $t\cdot\hat{s}'_{k}$ so it satisfies
properties (1) and (2). There are two options 
\begin{enumerate}
\item The subword $s'_{k}$ is not a concatenation of two or more suffixes
of $t$. Because of property (2), it is a suffix of $t$ satisfying
$l\left(s'_{k}\right)\leq l\left(s\right)$ and by assumption $l\left(s'_{k}\right)\leq l\left(s\right)<l\left(t\right)$
. Thus $s'_{k}$ is both a prefix and a suffix of $t$ and this is
a contradiction.
\item Otherwise observe that $l\left(s'_{k}\right)\leq l\left(t\right)$.
Thus a prefix of $s'_{k}$ is also a prefix of $t$. Because $s'_{k}$
is a concatenation of suffixes of $t$ it has a prefix that is a suffix
of $t$ and this is a contradiction. 
\end{enumerate}
\item If $l\left(s_{k}\right)<l\left(v_{k}\right)$ then we write $v_{k}=s_{k}v_{k+1}$
and $v_{k}\cdot t\cdot t\cdot\hat{s}_{k}=s_{k}\cdot v_{k+1}\cdot t\cdot t\cdot\hat{s}_{k}.$
We examine the cyclic permutation $v_{k+1}\cdot t\cdot t\cdot\hat{s}_{k}\cdot s_{k}$.
We notice that $\hat{s}_{k}\cdot s_{k}$ satisfies properties (1)
and (2) and we write $\hat{s}_{k+1}=\hat{s}_{k}\cdot s_{k}$. We notice
that $l\left(v_{k+1}\right)<l\left(v_{k}\right)$ thus case $3$ cannot
repeat indefinitely and ultimately we reach a contradiction. 
\end{enumerate}
We close the argument with another induction. We had a cyclic permutation
$v\cdot t\cdot t$ that has a decomposition $s\cdot u\cdot s$ and
we concluded that $s=t$ . Because $t$ is irreducible then $l\left(v\right)\geq l\left(t\right)$.
If $v=t$ we are done if not we write $v=tv_{1}$ and we observe the
cyclic permutation $v_{1}t^{1}tt$. This is the base for induction.
Let $v_{k}t^{k}tt$ be a cyclic permutation. Then writing $v'_{k}=v_{k}t^{k}$
we notice that $v'_{k}tt$ satisfies the same conditions as $vtt$
thus there is a decomposition $sus$ such that $s=t$. Because $t$
irreducible then $l\left(v_{k}\right)\geq l\left(t\right)$. If $v_{k}=t$
we are done if not we write $v_{k}=tv_{k+1}$ and we examine the cyclic
permutation $v_{k+1}t^{k+1}tt$. We notice that $l\left(v_{k+1}\right)<l\left(v_{k}\right)$
thus this process must end at some point i.e. for some $k$ we have
$v_{k}=t$ thus $w$ has a cyclic permutation that is a power of $t$.
\end{proof}
\begin{prop}
\label{prop:Let--be}Let $\oton wm$ be cyclic words and let $\sum_{i=1}^{m}a_{i}w_{i}$
a linear combination that is not in $\left\langle lw-w^{l}|w\text{ cyclicaly reduced},\allowbreak l\in\mathbb{N}\right\rangle $.
Then there exists a $k$ s.t. $\pi_{k}\left(\sum_{i=1}^{m}a_{i}w_{i}\right)\neq0$.
This shows that 
\[
\ker\pi=\left\langle lw-w^{l}|w\text{ cyclicaly reduced},l\in\mathbb{N}\right\rangle 
\]
.
\end{prop}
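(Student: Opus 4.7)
The plan is to quotient by $N = \left\langle lw - w^{l} \mid w \text{ cyclically reduced}, l \in \mathbb{N}\right\rangle$, reduce every class in $\mathbb{Z}\left[C\right]/N$ to a $\mathbb{Z}$-linear combination of distinct primitive cyclic words (cyclic words that are not proper powers), and then exhibit a $k$ for which $\pi_{k}$ separates any nonzero such reduced combination.

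The previous proposition already yields $\pi_{k}(lw - w^{l}) = l\pi_{k}(w) - \pi_{k}(w^{l}) = 0$, so $\pi_{k}$ factors through $\mathbb{Z}\left[C\right]/N$ for every $k$. Every cyclic word $w$ has a unique primitive root $u$ with $w = u^{n}$ as cyclic words, and uniqueness is essentially the content of the bifix characterisation in Lemma~\ref{lem:notPower}. Writing $w_{i} = u_{i}^{n_{i}}$ and applying the relation $u_{i}^{n_{i}} \equiv n_{i} u_{i} \pmod{N}$, the hypothesis combination is congruent modulo $N$ to $\sum_{v} b_{v} v$, where $v$ ranges over distinct primitive cyclic words and $b_{v} = \sum_{i \colon u_{i}=v} a_{i} n_{i}$. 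Since primitive cyclic words are distinct basis elements of $\mathbb{Z}\left[C\right]$, the assumption $\sum a_{i}w_{i}\notin N$ forces at least one $b_{v}$ to be nonzero.

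Using the $N$-invariance of $\pi_{k}$, it remains to show $\sum_{v} b_{v} \pi_{k}(v) \neq 0$ for some $k$. Set $L$ to be the maximum length of the primitive cyclic words $v$ appearing with $b_{v}\neq0$; I propose to take any $k \geq 2L$. The intermediate claim is that the supports of $\pi_{k}(v)$ in $M_{k}$ are pairwise disjoint as $v$ varies over these distinct primitive cyclic words. Granting this claim, each $\pi_{k}(v)$ is a nonzero non-negative vector, so a nonzero linear combination of them cannot vanish.

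The core of the argument is the disjointness claim, to be proved via Fine--Wilf. Suppose a length-$k$ u-word $\overline{w}$ appears as a subword of both $v_{i}$ and $v_{j}$. Then some orientation of $w$ is a length-$k$ prefix of $v_{i}^{\infty}$ for a suitable cyclic representative, and similarly some orientation of $w$ (possibly $w^{-1}$, since cyclic words are unoriented in this paper's convention) is a length-$k$ prefix of $v_{j}^{\infty}$. As a word and its inverse share the same set of periods, $w$ has both $l(v_{i})$ and $l(v_{j})$ as periods. The Fine--Wilf theorem, applicable since $k \geq l(v_{i}) + l(v_{j})$, gives $w$ the period $d = \gcd(l(v_{i}), l(v_{j}))$; combined with $k \geq l(v_{i})$, this forces $v_{i}$ to be a power of the length-$d$ prefix of $w$, and primitivity of $v_{i}$ forces $d = l(v_{i})$. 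Symmetrically $d = l(v_{j})$, so $v_{i}$ and $v_{j}$ are both realised by the same length-$d$ prefix of $w$ and coincide as cyclic words, contradicting their distinctness. The main obstacle is keeping the orientation bookkeeping straight while invoking periodicity, but this is absorbed uniformly by the observation that a word and its inverse have identical periods, so both ``sides'' of the unoriented cyclic word contribute the same constraint on $w$.
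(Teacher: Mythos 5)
Your argument is correct, and while the first step (reducing modulo $\left\langle lw-w^{l}\right\rangle$ to a combination of distinct primitive cyclic words with some nonzero coefficient) coincides with the paper's, the separation step is genuinely different. The paper takes $k$ equal to the maximal length occurring, and separates only the longest word from the rest: a length-$k$ subword of a strictly shorter cyclic word is forced to have the bifix form $t\cdot v\cdot t$, whereas the long word, not being a power, has a length-$k$ subword avoiding that form --- this is exactly the content of the technical Lemma \ref{lem:notPower}, whose proof occupies most of that subsection. You instead take $k\geq 2L$ and prove the stronger statement that the supports of $\pi_{k}(v)$ for distinct primitives $v$ are pairwise \emph{disjoint}, by observing that a common length-$k$ subword would have periods $l(v_{i})$ and $l(v_{j})$ with $k\geq l(v_{i})+l(v_{j})$, so Fine--Wilf yields the period $\gcd\left(l(v_{i}),l(v_{j})\right)$ and primitivity collapses $v_{i}=v_{j}$; your remark that a reduced word and its inverse have the same period set correctly disposes of the orientation issue created by the paper's convention $w=w^{-1}$ for cyclic words. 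What you buy is a much shorter and cleaner argument (and a disjoint-support statement of independent interest, giving a uniform $k$ that works for all terms at once rather than peeling off the longest); what it costs is importing the Fine--Wilf theorem as an external black box, whereas the paper is self-contained via Lemma \ref{lem:notPower}. The only points to make explicit in a final write-up are the standard uniqueness of the primitive root of a cyclic word (which you invoke in the reduction) and the observation that the length-$d$ suffix of a $d$-periodic word is a cyclic rotation of its length-$d$ prefix, which is what identifies $v_{j}$ with $v_{i}$ when the subword sits in $v_{j}$ with the opposite orientation.
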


\begin{proof}
Using the relations $lw-w^{l}$ the linear combination $\sum_{i=1}^{m}a_{i}w_{i}$
always has representative $\sum_{i=1}^{\hat{m}}\hat{a}_{i}\hat{w}_{i}$
such that $\oton w{\hat{m}}$ are pairwise distinct words that are
not powers. Therefore we assume this for $\sum_{l=1}^{m}a_{l}w_{l}$.
Without loss of generality assume $w_{m}$ is of maximal length. We
notice that the length $k$ subwords of a length $k$ cyclic word
are its cyclically reduced representative. If two length $k$ cyclic
words have a length $k$ subword in common they are the same cyclic
word, therefore we can assume that $l\left(w_{m}\right)>l\left(w_{i}\right)$
for $1\leq i\leq m-1$. We notice that if $u$ is a subword of a cyclic
word $w$ with $l\left(u\right)>l\left(w\right)$ then there is a
cyclically reduced representative $w'$ of $w$ such that $u=w'^{r}w_{1}$
where $w_{1}$ is a proper prefix of $w'$ and $r\in\mathbb{N}$ ($w_{1}$
is possibly trivial but not when $r=1$). Thus $u$ has the form $t\cdot v\cdot t$.
Let $l\left(w_{m}\right)=k$. Since $w_{m}$ is not a power by Lemma
\ref{lem:notPower} it has a subword of length $k$ that does not
have the form $t\cdot v\cdot t$. Assume that $\pi_{k}\left(\sum_{i=1}^{m}a_{i}w_{i}\right)=0$
then $-a_{m}\pi_{k}\left(w_{m}\right)=\sum_{i=1}^{m-1}a_{i}\pi_{k}\left(w_{i}\right)$
but this is a contradiction because the sum on the left side has a
u-word that does not have the form $tvt$ while all the u-word summands
on the right side have the form $tvt$ since $k=l\left(w_{m}\right)>l\left(w_{i}\right)$
for $1\leq i\leq m-1$. 
\end{proof}
This proposition is the main component of Theorem \ref{thm:main_injective_representation}.
We take a moment to talk about the topology of an inverse limit of
free modules.
\begin{defn}
Let $p_{i}:M_{i}\to M_{i-1}$ be an inverse system of finitely generated
free $\mathbb{Z}$-module (vector spaces). We define a valuation $V$
on $\varprojlim M_{i}$. Let $v=\left(v_{i}\right)_{i\in\mathbb{N}}\in\varprojlim M_{i}$
i.e. $v_{i}=p_{i+1}\left(v_{i+1}\right)$ than we define $V\left(v\right)=\max\left\{ k\in\mathbb{N}|v_{k}=0\right\} $.
Let $\hat{p}_{i}:\varprojlim M_{i}\to M_{i}$ the natural projection
and note that $\hat{p}_{i}=p_{i+1}\circ\hat{p}_{i+1}$. 
\end{defn}

Three equivalent ways to define the topology of $\varprojlim M_{i}$
\begin{enumerate}
\item We take the discrete topology on each $M_{i}$ and define the topology
of $\varprojlim M_{i}\leq\prod_{i}M_{i}$ to be the induced topology
from the product topology (general topology)
\item We take topology induced by taking $\ker\hat{p}_{i+1}\leq\ker\hat{p}_{i}$
to be local base of open neighborhoods of $0$. (topological module)
\item Let $v\in\varprojlim M_{i}$ and $n\in\mathbb{N}$ we define $B_{v,n}=\left\{ w\in\varprojlim M_{i}|V\left(w-v\right)\geq n\right\} $.
The sets $B_{v,n}$ are a base for the topology. (metric topology) 
\end{enumerate}
Let us return to the morphism $\pi:\mathbb{Z}\left[C\right]\to\varprojlim\tilde{M}_{i}$.
For every $i$ the morphism $\pi_{i}$ is onto $\tilde{M}_{i}$ thus
the image of $\pi$ is dense in $\varprojlim\tilde{M}_{i}$. The module\noun{
}$\mathbb{Z}\left[C\right]/\left\langle lw-w^{l}|w\text{ cyclicaly reduced},l\in\mathbb{N}\right\rangle $
can be seen as the free module generated by cyclic words that are
not non-trivial powers. The usual way to define a metric would be
$\norm{}:\varprojlim\tilde{M}_{i}\to\mathbb{R}$ by $\norm v=2^{-V\left(v\right)}$
 In this case $\varprojlim\tilde{M}_{i}$ is the completion of $\im\pi$
with respect to the metric. 

\subsection{Representation}

We now show how Theorem \ref{thm:main_counting_words} gives us a
representation. Let $\varphi$ be an automorphism. We recall the definition
of $m_{k}^{\varphi}$ from the introduction. Let $u\in W_{k}$, let
$S_{\varphi,u}$ be the multiset of words given by Theorem \ref{thm:main_counting_words},
let $w$ a cyclic word and $k_{v}=l\left(v\right)$ for $v\in S_{\varphi,u}$
then by definition of $\pi_{k}$ and Theorem \ref{thm:main_counting_words}
we have 
\[
u^{*}\left(\pi_{k}\left(\varphi\left(w\right)\right)\right)=\left|\hom\left(u,\varphi\left(w\right)\right)\right|=\sum_{v\in S_{\varphi,u}}\left|\hom\left(v,w\right)\right|=\sum_{v\in S_{\varphi,u}}v^{*}\left(\pi_{k_{v}}\left(w\right)\right)
\]
Because the equation is true for every $w$ we have an equation of
functions $u^{*}\circ\pi_{k}\circ\varphi=\sum v^{*}\circ\pi_{k_{v}}$
equivalently we can write 
\[
\varphi^{*}\circ\pi_{k}^{*}\left(u^{*}\right)=\sum_{v\in S_{\varphi,u}}\pi_{k_{v}}^{*}\left(v^{*}\right)
\]
Where we consider $\varphi$ as the linear extension of $\varphi$
to $\mathbb{Z}\left[C\right]$. We denote 
\[
p_{k,l}=p_{l+1}\circ\cdots\circ p_{k-1}\circ p_{k}
\]
 thus $p_{k,l}\circ\pi_{k}=\pi_{l}$. We recall 
\[
\overleftarrow{p}_{m_{k},k_{v}}\circ\pi_{m_{k}}=p_{m_{k},k_{v}}\circ\pi_{m_{k}}=\pi_{k_{v}}
\]
 so $\pi_{k_{v}}^{*}=\pi_{m_{k}}^{*}\circ\overleftarrow{p}_{m_{k},k_{v}}^{*}$
thus 
\[
\varphi^{*}\circ\pi_{k}^{*}\left(u^{*}\right)=\sum_{v\in S_{\varphi,u}}\pi_{k_{v}}^{*}\left(v^{*}\right)=\pi_{m_{k}}^{*}\left(\sum_{v\in S_{\varphi,u}}\overleftarrow{p}_{m_{k},k_{v}}^{*}\left(v^{*}\right)\right)
\]
 We define $\varphi_{k}^{*}:M_{k}^{*}\to M_{m_{k}}^{*}$ by 
\[
\varphi_{k}^{*}\left(u^{*}\right)=\sum_{v\in S_{\varphi,u}}\overleftarrow{p}_{m_{k},k_{v}}^{*}\left(v^{*}\right)
\]
 it satisfies $\varphi^{*}\circ\pi_{k}^{*}=\pi_{m_{k}}^{*}\circ\varphi_{k}^{*}$.
The homomorphism $\varphi_{k}^{*}$ defines a homomorphism $\varphi_{k}:M_{m_{k}}\to M_{k}$
that satisfies 
\[
\varphi_{k}\circ\pi_{m_{k}}=\pi_{k}\circ\varphi
\]
 We notice that we use $\overleftarrow{p}_{m_{k},k_{v}}^{*}$ in the
definition and we recall that $\overleftarrow{p}_{m_{k},k_{v}}^{*}$
is defined using an arbitrary u-word orientation. Thus a morphism
$\varphi_{k}$ that satisfies $\varphi_{k}\circ\pi_{m_{k}}=\pi_{k}\circ\varphi$
is not unique. A different orientation would result in a different
morphism $\varphi_{k}$. But if we consider the restriction $\varphi_{k}|_{\tilde{M}_{m_{k}}}:\tilde{M}_{m_{k}}\to\tilde{M}_{k}$
then it is unique. Let $\varphi_{k}$ and $\varphi'_{k}$ both satisfy
\[
\varphi_{k}\circ\pi_{m_{k}}=\pi_{k}\circ\varphi=\varphi'_{k}\circ\pi_{m_{k}}
\]
 but $\pi_{m_{k}}$is onto on $\tilde{M}_{m_{k}}$ so $\varphi'_{k}|_{\tilde{M}_{m_{k}}}=\varphi_{k}|_{\tilde{M}_{m_{k}}}$.
We notice that 
\begin{align*}
p_{k}\circ\varphi_{k}\circ\pi_{m_{k}} & =p_{k}\circ\pi_{k}\circ\varphi\\
 & =\pi_{k-1}\circ\varphi\\
 & =\varphi_{k-1}\circ\pi_{m_{k-1}}\\
 & =\varphi_{k-1}\circ p_{m_{k},m_{k-1}}\circ\pi_{m_{k}}
\end{align*}
But $\pi_{m_{k}}$ is onto on $\tilde{M}_{m_{k}}$ so 
\begin{equation}
p_{k}\circ\varphi_{k}=\varphi_{k-1}\circ p_{m_{k},m_{k-1}}\label{eq:presentation}
\end{equation}
 This shows that the sequence $\varphi_{k}:\tilde{M}_{m_{k}}\to\tilde{M}_{k}$
defines a homomorphism $\varprojlim\tilde{M}_{k}\to\varprojlim\tilde{M}_{k}$.
 We notice that 
\begin{align*}
\left(\varphi\circ\psi\right)_{k}\circ p_{m_{m_{k}^{\varphi}}^{\psi},m_{k}^{\varphi\circ\psi}}\circ\pi_{m_{m_{k}^{\varphi}}^{\psi}} & =\left(\varphi\circ\psi\right)_{k}\circ\pi_{m_{k}^{\varphi\circ\psi}}\\
 & =\pi_{k}\circ\left(\varphi\circ\psi\right)\\
 & =\varphi_{k}\circ\pi_{m_{k}^{\psi}}\circ\psi\\
 & =\varphi_{k}\circ\psi_{m_{k}^{\varphi}}\circ\pi_{m_{m_{k}^{\varphi}}^{\psi}}
\end{align*}
 as before we get 
\begin{equation}
\left(\varphi\circ\psi\right)_{k}\circ p_{m_{m_{k}^{\varphi}}^{\psi},m_{k}^{\varphi\circ\psi}}=\varphi_{k}\circ\psi_{m_{k}^{\varphi}}\label{eq:composition}
\end{equation}
 We notice that the sequences $\left(\varphi\circ\psi\right)_{k}\circ p_{m_{m_{k}^{\varphi}}^{\psi},m_{k}^{\varphi\circ\psi}}$
and $\left(\varphi\circ\psi\right)_{k}$ define the same homomorphism
in the limit. Thus we get that $\text{Out}\left(F_{n}\right)\to\aut\left(\varprojlim\tilde{M}_{k}\right)$
is a homomorphism. Lastly let $\varphi,\psi$ be automorphism from
different conjugacy classes then there exists a generator of the free
group $x$ such that $\varphi\left(x\right),\psi\left(x\right)$ are
from different conjugacy classes. because $\varphi,\psi$ are automorphisms
$\psi\left(x\right),\varphi\left(x\right)$ are not powers thus by
Proposition \ref{prop:Let--be} there is a $k$ such that $\pi_{k}\circ\varphi\left(x\right)\neq\pi_{k}\circ\psi\left(x\right)$
without loss of generality assume $m_{k}^{\varphi}\leq m_{k}^{\psi}$
thus 

\begin{eqnarray*}
\varphi_{k}\circ p_{m_{k}^{\psi},m_{k}^{\varphi}}\left(\pi_{m_{k}^{\psi}}\left(x\right)\right) & \neq & \psi_{k}\left(\pi_{m_{k}^{\psi}}\left(x\right)\right)
\end{eqnarray*}
 The sequences $\varphi_{k}\circ p_{m_{k}^{\psi},m_{k}^{\varphi}}$
and $\varphi_{k}$ define the same homomorphism in the limit. We conclude
that $\text{Out}\left(F_{n}\right)\to\aut\left(\varprojlim\tilde{M}_{k}\right)$
is injective (Theorem \ref{thm:main_injective_representation}). We
note that $M_{k}$ and $M_{m_{k}}$ have a given basis thus $\varphi_{k}:M_{m_{k}}\to M_{k}$
is a matrix and $\varphi$ is represented by a tower of finite matrices.
Equation \ref{eq:presentation} shows that $\varphi_{k-1}$ can be
reconstructed given $\varphi_{k}$ thus each successive matrix gives
more information about $\varphi$. Equation \ref{eq:composition}
shows that computations in $\aut\left(\varprojlim\tilde{M}_{k}\right)$
can be done via matrix multiplication. 

\bibliographystyle{plain}
\bibliography{mybib}

\end{document}